\documentclass[11pt,a4paper,reqno]{amsart}
\usepackage{epsfig}
\usepackage{color}
\usepackage{amssymb,amsmath,amsthm,amstext,amsfonts}
\usepackage[normalem]{ulem}
\usepackage{amsmath,amscd,amstext,amsthm,amsfonts,latexsym}
\usepackage[colorlinks=true,linkcolor=blue, urlcolor=red, citecolor=blue, backref]{hyperref}	

\usepackage{amsmath,amssymb,amsthm}
\usepackage{color}
\usepackage[active]{srcltx}
\usepackage{a4wide}
\usepackage{amssymb, amsmath}
\usepackage{graphicx}
\usepackage{float}
\usepackage[active]{srcltx}

\def \cP {\mathcal{P}}
\def \N {\mathbb{N}}
\def \vep {\varepsilon}

\theoremstyle{plain}

\newtheorem{theorem}{Theorem}[section]
\newtheorem{lemma}[theorem]{Lemma}
\newtheorem{proposition}[theorem]{Proposition}
\newtheorem{corollary}[theorem]{Corollary}

\newtheorem{maintheorem}{Theorem}

\newtheorem{example}[theorem]{Example}

\newtheorem{definition}[theorem]{Definition}

\numberwithin{equation}{section}

\newcommand{\intav}[1]{\mathchoice {\mathop{\vrule width 6pt height 3 pt depth  -2.5pt
\kern -8pt \intop}\nolimits_{\kern -6pt#1}} {\mathop{\vrule width
5pt height 3  pt depth -2.6pt \kern -6pt \intop}\nolimits_{#1}}
{\mathop{\vrule width 5pt height 3 pt depth -2.6pt \kern -6pt
\intop}\nolimits_{#1}} {\mathop{\vrule width 5pt height 3 pt depth
-2.6pt \kern -6pt \intop}\nolimits_{#1}}}

\title[Dynamical metric order]{Dynamical metric order}

\author[M. Carvalho]{Maria Carvalho}
\address{CMUP \& Departamento de Matemática, Faculdade de Ci\^encias da Universidade do Porto, Rua do Campo Alegre s/n, 4169-007 Porto, Portugal.}
\email{mpcarval@fc.up.pt; Orcid ID: http://orcid.org/0000-0001-6929-6442}

\author[F. B. Rodrigues]{Fagner B. Rodrigues}
\address{Departmento de Matemática, Universidade Federal do Rio Grande do Sul, Brazil}
\email{fagnerbernardini@gmail.com; Orcid ID:  https://orcid.org/0000-0002-7596-8214}

\begin{document}

\date{\today}
\keywords{Metric mean dimension; Metric order; Quantization; Induced map.}
\subjclass[2020]{
Primary:
28D20, 
37A05, 
37C45.  
Secondary:
37B40,  
37D35.  
}

\begin{abstract}
We introduce a counterpart of the metric mean dimension with the role of the box-counting dimension being played by the metric order. This new concept is devised for continuous maps acting on compact metric spaces with infinite box-counting dimension but finite metric order. It provides further information about full shifts with infinite metric mean dimension; and yields a sharper estimate of complexity for the maps induced on the space of probability measures by continuous transformations on compact metric spaces, whose upper metric mean dimension is known to admit only two values (zero or infinity). When applied to homeomorphisms, it satisfies a variational principle where the maximization is taken over the space of invariant probability measures and whose equilibrium states always exist.
\end{abstract}

\maketitle



\section{Introduction}

Let $(X,d)$ be a compact metric space and $f \colon X \to X$ be a continuous map. Denote by $\mathcal{P}(X)$ the space of Borel probability measures on $X$ endowed with the weak$^*$-topology, by $\mathcal{P}_f(X)$ the subspace of $f$-invariant measures and by $\mathcal{E}_f(X)$ its subset of ergodic elements. The space $\mathcal{P}(X)$ is nonempty, and it is also compact when endowed with the weak$^*$-topology. There are metrics on $\mathcal{P}(X)$ that induce this topology, the classic ones being the \emph{Wasserstein distances} $W_p$, for $1 \leqslant p < +\infty$, and the \emph{L\'evy-Prokhorov distance} $LP$ (see Subsection~\ref{sse:metric} for the precise definitions). The map $f$ induces in a natural way a continuous transformation $f_\ast \colon \mathcal{P}(X) \to \mathcal{P}(X)$ defined by $f_\ast(\mu) = \mu \circ f^{-1}$, where for every Borel set $B$ one has $\big(\mu \circ f^{-1}\big) (B) = \mu (f^{-1}(B))$.
\smallskip

Dimension theory studies the geometrical complexity of spaces by measuring sets using, for instance, open covers with varying diameters and estimating the exponential growth of their covering numbers with the approximating scale. Although the list of options is large, we consider the (upper) \emph{box-counting dimension}, defined by
$$\mathrm{\overline{\dim}_B}(X,d) \, = \, \limsup_{\vep \, \to \, 0^+}\, \frac{\log N(X,d,\vep)}{|\log \vep|}$$
where, given $\vep > 0$, $N(X,d,\vep)$ stands for the minimum number of open balls with radius $\vep$, in the metric $d$, that cover $(X,d)$. When the box-counting dimension is infinite, one may need to employ a refined concept to distinguish spaces; we resort to the (upper) \emph{metric order}, defined by
$$\overline{\mathrm{mo}}\,(X,d) \, = \, \limsup_{\vep \, \to \, 0^+}\, \frac{\log \log N(X,d,\vep)}{|\log \vep|}.$$

\smallskip

The \emph{topological entropy} is a classical invariant of topological conjugacy, introduced by R. Adler, A. Konheim and M. McAndrew in \cite{AKM}, which informs on the dynamical complexity of systems and sometimes provides a way to select elements from $\cP_f(X)$. It quantifies to what extent nearby orbits diverge as the dynamical system evolves. Recall that it is a non-negative real number or $+\infty$ defined by
$$\mathrm{h_{top}}(f) \, = \, \limsup_{\vep\,\to\, 0^+} \,h_\vep(f)$$
where the topological entropy of $f$ at scale $\vep$ is
\begin{equation}\label{eq:hvep}
h_{\vep}(f) \, = \,\limsup_{n\, \to\, +\infty}\,\frac{1}{n}\,\log S(X,n,\varepsilon)
\end{equation}
and, for each $n \in \N$, $S(X, n,\vep)$ stands for the maximal cardinality of the $(n,\vep)$-separated subsets of $X$ with respect to the Bowen dynamical metric $d_n$, given by
$$d_n(x,y) = \max_{0 \, \leqslant \,j\,\leqslant \,n-1}\,\big\{d(f^j(x), \,f^j(y)\big\}.$$
For example, on a compact metric space with finite box-counting dimension, a Lipschitz map has finite topological entropy. However, if the dynamics is just continuous, the topological entropy may be infinite. Actually, K. Yano proved in \cite{Yano}
that, on compact smooth manifolds with dimension greater than one, the set of homeomorphisms having infinite topological entropy is $C^0$-generic. 

\smallskip

M. Gromov proposed in \cite{Gro99} a new topological invariant of dynamical systems, called mean dimension, which brings together dimension and dynamics. This concept paved the way for the definition of a related metric-dependent entropy-dimensional quantity, proposed by E. Lindenstrauss and B. Weiss in \cite{LW2000}: the \emph{metric mean dimension}, denoted by $\mathrm{mdim}_M\,(X, d, f)$. It may be thought of as a dynamical box-counting dimension, since it is both a dynamical analog of the box-counting dimension and a dimensional analog of the topological entropy. Its upper version is defined by
$$\mathrm{\overline{\mathrm{mdim}}_M}\,(X, d, f)\, = \, \limsup_{\vep \, \to \, 0^+}\, \frac{\limsup_{n \, \to \, +\infty}\,\frac{1}{n}\,\log \,S(X,n,\vep)}{|\log \vep|}.$$
Thus, the upper metric mean dimension vanishes if the topological entropy of $f$ is finite; if, otherwise, $f$ has infinite entropy, it quantifies the speed at which the entropy at scale $\vep$ approaches $+\infty$ as the scale goes to zero. Moreover, it is bounded from above by the upper box-counting dimension of $(X,d)$ (cf. \cite{LW2000}). For example, it is known (cf. \cite{VV}) that, if $X=Y^\N$ is a shift space whose alphabet is a compact metric space $(Y, d_Y)$, the space $X$ is endowed with the metric
$$\rho_Y\,\big((a_k)_{k \, \in \, \N},\,\,(b_k)_{k \, \in \, \N}\big) \,=\, \sum_{k=1}^{+\infty}\, 2^{-k}\,d_Y(a_k, \,b_k)$$
and $\sigma$ is the shift map on $X$, then
\begin{equation}\label{eq:mdimshift}
\mathrm{\overline{\mathrm{mdim}}_M}\,(Y^\N, \rho_Y, \sigma)\, = \, \mathrm{\overline{\dim}_B}(Y,d_Y).
\end{equation}
 
\smallskip

The purpose of this paper is to introduce a counterpart of the metric mean dimension, which we call \emph{dynamical metric order}, based on the metric order instead of the box-counting dimension, and meant to distinguish dynamical systems acting on infinite box-counting dimensional spaces. It is defined by
\begin{equation}\label{def:moint}
\mathrm{\overline{mdim}_{mo}}\,\big(X,d,f\big) = \limsup_{\varepsilon\,\to\, 0^+} \,\frac{\log^+ \Big(\limsup_{n\, \to\, +\infty}\,\frac{1}{n}\,\log S(X,n,\varepsilon)\Big)}{|\log \varepsilon|}
\end{equation}
where $\log^+(t) = \max \{0, \log t\}$, $\log^+ 0 = 0$ and $\log^+ \,(+\infty) = +\infty$. Some of its properties are expected and deduced by methods similar to those used to study the metric mean dimension. For example, the metric order is an upper bound of the dynamical metric order; and, as happens in \eqref{eq:mdimshift} for the metric mean dimension, a full shift satisfies (cf. Theorem~\ref{thm1})
$$\mathrm{\overline{mdim}_{mo}}\,\big(Y^\mathbb{N},
\rho, \sigma\big) \,=\, \mathrm{\overline{mo}}\,\big(Y,d_Y\big).$$
However, there is an important departure in this new approach that is of great value. Given a continuous map $f\colon X \to X$, if $W_1$ stands for a Wasserstein distance on $\cP(X)$ (for which one has $\mathrm{\overline{dim}_B}\,\big(\mathcal{P}(X),W_1\big) = +\infty$), then
$$\mathrm{\overline{mdim}_M}\,\big(\mathcal{P}(X), W_1, f_\ast\big) = 0 \quad \quad \text{or} \quad \quad  \mathrm{\overline{mdim}_M}\,\big(\mathcal{P}(X), W_1, f_\ast\big) = +\infty$$
whether $\mathrm{h_{top}}(f)=0$ or $\mathrm{h_{top}}(f)> 0$, respectively. Whereas, if $0<\mathrm{\overline{dim}_B}\,\big(X,d\big)< +\infty$ and $f$ is Lipschitz with positive topological entropy, then (cf. Corollary~\ref{thm2}, a simple corollary of \cite{BurguetShi})
$$0 \, < \, \mathrm{\overline{mdim}_{mo}}\,\big(\mathcal{P}(X), W_1, f_\ast\big)\,\leqslant\, \mathrm{\overline{dim}_B}(X,d)\,<\,+\infty.$$

\smallskip

We can find in the literature a number of interesting suggestions on how to define a \emph{measure-theoretic dynamical metric order}. See, for example, a comprehensive list in \cite{CYZ}. When considering the metric order, a natural choice seems to be the quantization of measures. More precisely, given $\vep > 0$, the \emph{quantization number} $Q_{\mu,D}(\vep)$ of $\mu \in \mathcal{P}(X)$ at the scale $\vep$ with respect to the metric $D$ in $\mathcal{P}(X)$ is the least positive integer $N$ such that there exists $\nu\in\cP(X)$ supported on a set of cardinality $N$ with $D(\mu, \nu) < \vep$. The \emph{upper quantization order} of $\mu\in\cP(X)$ with respect to the metric $D$ is defined by
\begin{eqnarray*}
\mathrm{\overline{qo}_D}(\mu) &=& \limsup_{\vep \, \to \, 0^+} \,\frac{\log \log Q_{\mu,D}(\vep)}{|\log \vep|}.
\end{eqnarray*}
This notion yields an immediate variational principle for the dynamical metric order of a full shift, with equilibrium states in $\cP(X)$ (cf. Corollary~\ref{cor:vp}). More precisely, if $(Y,d_Y)$ is a compact metric space, $X=Y^\N$ and $D \in \{LP\}\cup\{W_p \colon 1 \leqslant p < +\infty\}$, then 
$$\mathrm{\overline{mdim}_{mo}}\,\big(Y^\mathbb{N},\rho_Y, \sigma\big) \,=\, \max_{\mu \, \in \, \mathcal{P}(Y)}\, \mathrm{\overline{qo}_{D}}(\mu).$$ 
In Section~\ref{se:QDVP}, we discuss a dynamical variant of the quantization of measures that also builds a variational principle of interest, with equilibrium states invariant by the dynamics (see Corollary~\ref{thm4}). This variant was inspired by \cite{BB}, where the authors consider, for each $n \in \N$, dynamical Wasserstein distances $W_{p,n}$, for $1 \leqslant p < +\infty$, and a dynamical L\'evy-Prokhorov distance $LP_n$, defined using the dynamical metric $d_n$ in $X$ instead of the original metric $d$. The proof of such a variational principle is just a slight modification of the argument given in \cite[Theorem~A and B]{CP2026}, where a classical variational principle is established for the metric mean dimension.
\smallskip

The aforementioned dynamical distances on $\cP(X)$ led us to ask how the metric order of $\cP(X)$ varies with $n$. We establish the following lower bound (cf. Theorem~\ref{thm3}):
$$\limsup_{n \, \to \, +\infty}\,
\frac{\mathrm{\overline{mo}}\,\big(\mathcal{P}(X), W_{1,n}\big)}{n} \,=\, \limsup_{n \, \to \, +\infty}\,
\frac{\mathrm{\overline{dim}_B}\,\big(X,d_n\big)}{n} \, \geqslant \, \mathrm{\overline{mdim}_M}\,\big(X,d,f\big).$$
\smallskip

The paper is structured as follows. In the next section, we gather information on the notation and concepts we will use. We thoroughly explain our contributions in Sections~\ref{se:shifts} and \ref{se:growth}, where we also discuss several examples. In Sections~\ref{se:QDVP} and \ref{se:complex}, we describe the two main motivations for our new approach.  Section~\ref{se:examp} gathers more examples that help to clarify the sharpness and scope of our results. We present a number of basic properties of the dynamical metric order in Section~\ref{se:basic}, and the proofs of our contributions appear in Sections~\ref{se:proofA}--\ref{se:proofCC}.

\section{Definitions}\label{sec:definition}
Let $(X,d)$ stand for a compact metric space and $f \colon X \to X$ be a continuous map.

\subsection{ Bowen dynamical metrics}
Given $n\in \mathbb{N}$, the dynamical metric $d_n \colon X \times X \, \to \,[0,+\infty[$ determined by the distance $d$ in $X$ and the map $f$ is defined by 
\begin{displaymath}
d_n(x,y)=\max\,\Big\{d(x,y),\,d(f(x),f(y)),\,\dots,\,d(f^{n-1}(x),f^{n-1}(y))\Big\}.
\end{displaymath}
It is easy to check that $d_n$ is indeed a metric and that it generates the same topology as $d$. Given $\varepsilon > 0$, $n \in \mathbb{N}$ and a point $x \in X$, the open $(n, \varepsilon)$-ball around $x$ is the set
\begin{displaymath}
\mathit{B}_{n}(x, \varepsilon) = \{ y \in X :d_n( x, y) < \varepsilon \}.
\end{displaymath}
    
\subsection{Separated sets, spanning sets and open covers}
Given $\varepsilon > 0$ and $n \in \mathbb{N}$, the next  three quantities count the number of orbit segments of length $n$ that are distinguishable at scale $\vep$.

A set $E \subset X$ is \emph{$(n,\varepsilon)$-separated} if any two distinct points in $E$ are at least $\vep$-apart in the metric $d_n$, that is, $d_n(x,y) > \varepsilon$ for every $x\neq y \in E$. We denote by $S(X, n,\varepsilon)$ the maximal cardinality of the $(n,\varepsilon)$-separated subsets of $X$. When $n=1$, we simplify this notation saying that the set $E$ is $\varepsilon$-separated and changing the notation to $S(X,d,\vep)$. Thereby, for every $\vep>0$ and $n\in\mathbb N$, $S(X, n, \vep) \,=\, S(X, d_n,\vep).$

A set $F\subset X$ is said to be \emph{$(n,\varepsilon)$-spanning} if for any $x\in X$ there exists $z\in F$ such that $d_n(x,z)<\varepsilon$. We denote by $R(X,n,\varepsilon)$ the minimal cardinality of the $(n,\varepsilon)$-spanning subsets of $X$. Again, when $n=1$, we refer to $\varepsilon$-spanning sets, with minimal cardinality denoted $R(X, d,\vep)$. So, 
$R(X, n, \vep) \,=\, R(X, d_n,\vep)$ for every $\vep>0$ and $n\in\N.$

Given $\vep >0$ and $n \in \N$, the \emph{$\vep$-covering number of $(X,d_n)$}, denoted by $\mathcal{N}(X,n,\vep)$, is \emph{the minimum cardinality of a covering of $X$ by sets of $d_n$-diameter less than $\vep$} (the diameter of a set is the supremum of the distances between pair of points in the set). When $n=1$, we refer to $\mathcal{N}(X,d,\vep)$, hence $\mathcal{N}(X, n, \vep) \,=\, \mathcal{N}(X, d_n,\vep)$ for any $\vep>0$ and $n\in\N.$

We observe that, due to the compactness of $X$, the values of $S(X,n,\varepsilon)$, $R(X,n,\varepsilon)$ and $\mathcal{N}(X,n,\vep)$ are finite for every $n\in\mathbb{N}$ and $\vep>0$. In addition, the previous three quantities are related by (cf. \cite[Lemma 2.5.1]{BrinS}:
\begin{equation}\label{eq:srn} 
\mathcal{N}(X,n,2\vep) \, \leqslant \, R(X,n,\vep) \, \leqslant \, S(X,n,\vep)\, \leqslant \, \mathcal{N}(X,n,\vep).    
\end{equation}
Moreover, 
\begin{equation}\label{eq:cov} 
\mathcal{N}(X,n+m,\vep) \, \leqslant \, \mathcal{N}(X,n,\vep)\, . \, \mathcal{N}(X,m,\vep)
\end{equation}
thus the sequence $\big(\log \mathcal{N}(X,n,\vep)\big)_{n\, \in\, \N}$ is sub-additive.

\subsection{ Box-counting dimension}\label{sse:boxdim}
Given $\vep > 0$, let $N(X,d,\vep)$ stand for \emph{the smallest number of open balls in the metric $d$ with radius $\vep$ which cover $X$}. We observe that, for every $\vep >0$, one has
\begin{equation}\label{eq:box}
S(X,d,2\vep) \, \leqslant \, N(X,d,\vep) \, \leqslant\, S(X,d, \vep).
\end{equation}
The \emph{upper box-counting dimension of $(X,d)$} is defined by
$$\mathrm{\overline{dim}_B}(X,d) \,=\, \limsup_{\vep \, \to \, 0^+}\,\frac{\log N(X,d,\vep)}{|\log \vep|}.$$
The lower box-counting dimension $\mathrm{\underline{dim}_B}(X,d)$ is defined by taking $\liminf$ instead of $\limsup$. Notice that, due to \eqref{eq:box}, the upper box-counting dimension may be computed using $(n,\vep)$-separated sets, that is,
$$\mathrm{\overline{dim}_B}(X,d) \,=\, \limsup_{\vep \, \to \, 0^+}\,\frac{\log S(X,d,\vep)}{|\log \vep|}$$
(and similarly for the lower version). We observe that, by \eqref{eq:srn} and \eqref{eq:box}, we also have
$$\mathrm{\overline{dim}_B}(X,d) \,=\, \limsup_{\vep \, \to \, 0^+}\,\frac{\log \mathcal{N}(X,d,\vep)}{|\log \vep|}.$$

\subsection{ Metric mean dimension}\label{sse:mdim}
The \emph{upper} and \emph{lower metric mean dimensions} are labels of complexity specially devised for dynamical systems with infinite topological entropy. They are denoted by $\overline{\mathrm{mdim}}_M(X,d,f)$ and $\underline{\mathrm{mdim}}_M(X,d,f)$, respectively, to emphasize their dependence on the fixed metric $d$ of the space $X$ where the dynamics $f$ acts. They are defined by
\begin{eqnarray*}
\mathrm{\overline{mdim}_M}\big(X,d,f\big) &=& \limsup_{\vep \, \to \, 0^+}\,\frac{\limsup_{n\, \to \, +\infty}\, \frac{1}{n}\,\log S(X,n,\vep)}{|\log \vep|}\\
\mathrm{\underline{mdim}_M}\big(X,d,f\big) &=& \liminf_{\vep \, \to \, 0^+}\,\frac{\limsup_{n\, \to \, +\infty}\,\frac{1}{n}\,\log S(X,n,\vep)}{|\log \vep|}.
\end{eqnarray*}
Due to \eqref{eq:srn}, the metric mean dimensions may be computed using $R(X,n,\vep)$ or $\mathcal{N}(X,n,\vep)$ instead of $S(X,n,\vep)$. It is known (cf. \cite[Section~5]{VV}) that 
\begin{equation}\label{eq:mdimbox}
\mathrm{\overline{mdim}_M}\big(X,d,f\big) \, \leqslant \, \mathrm{\overline{dim}_B}\big(X,d).
\end{equation}

\subsection{ Metric order} The \emph{upper} and \emph{lower metric orders of $(X,d)$}, defined by Kolmogorov and Tikhomirov \cite{Kolmogorov-Tikhomirov}, are given, respectively, by
\begin{align*}
\overline{\mathrm{mo}}\,(X,d) = \limsup_{\varepsilon\,\to\, 0^+}\,\frac{\log\log S(X,d,\vep)}{|\log \varepsilon|}
\qquad\text{and}\qquad
\underline{\mathrm{mo}}\,(X,d) = \liminf_{\varepsilon\,\to\, 0^+}\,\frac{\log\log S(X,d,\varepsilon)}{|\log \varepsilon|}
\end{align*} 
where it is agreed that $\log 0 = 0$. In case both quantities coincide, we simply denote them by ${\mathrm{mo}}\,(X,d)$, which is named \emph{the metric order of the set $X$ with respect to the distance $d$}. By \eqref{eq:box}, in the previous limits we may use $\mathcal{N}(X,d,\vep)$ instead of $S(X,d,\vep)$.
Observe that, since $\log t < t$ for every $t > 0$, one has
$$\overline{\mathrm{mo}}\,(X,d) \, \leqslant \, \overline{\mathrm{dim}_B}\,(X,d).$$

\subsection{Dynamical metric order}\label{sse.mmd}
The \emph{dynamical metric order of $f$ in $X$ with respect to $d$} is defined by 
$$\mathrm{\overline{mdim}_{mo}}\,\big(X, d, f\big) = \limsup_{\varepsilon\,\to\, 0^+} \,\frac{\log^+ \Big(\limsup_{n\, \to\, +\infty}\,\frac{1}{n}\,\log S(X,n,\varepsilon)\Big)}{|\log \varepsilon|}$$
where $\log^+(t) = \max \{0, \log t\}$, $\log 0 = 0$ and $\log \,(+\infty) = +\infty$. Similarly, we define the lower dynamical metric order of $f$ in $X$ with respect to $d$ by 
$$\mathrm{\underline{mdim}_{mo}}\,\big(X,d, f\big) = \liminf_{\varepsilon\,\to\, 0^+} \,\frac{\log^+ \Big(\limsup_{n\, \to\, +\infty}\,\frac{1}{n}\,\log S(X,n,\varepsilon)\Big)}{|\log \varepsilon|}.$$
The relations \eqref{eq:srn} and \eqref{eq:cov} imply that the dynamical metric order can be computed using $R(X,n,\vep)$ or $\mathcal{N}(X,n,\vep)$ instead of $S(X,n,\vep)$. 
\smallskip

From \eqref{eq:mdimbox} and the inequality $\log t < t$ for every real $t>0$, we conclude that
\begin{equation}\label{eq:leq}
\mathrm{\overline{mdim}_{mo}}\,\big(X,d, f\big) \,\leqslant \,\mathrm{\overline{mdim}_{M}}\,\big(X,d, f\big) \, \leqslant \, \mathrm{\overline{dim}_{B}}\,\big(X,d\big)
\end{equation}
and similarly for the lower dynamical metric order. Observe that \eqref{eq:leq} implies that, if either $\mathrm{\overline{dim}_{B}}\,\big(X,d\big) = 0$ or $\mathrm{h_{top}}(f) < +\infty$, then 
$$\mathrm{\overline{mdim}_{mo}}\,\big(X,d,f\big) \,= \,\mathrm{\overline{mdim}_{M}}\,\big(X,d,f\big) \, =  \, 0.$$
Example~\ref{ex:null} shows that the inequalities \eqref{eq:leq} may be strict.

\smallskip

\subsection{$W_p$ and $LP$ metrics on $\mathcal{P}(X)$}\label{sse:metric}
The space $\mathcal{P}(X)$ of the Borel probability measures on $X$ is metrizable if endowed with the weak$^*$-topology. The classic distances on $\cP(X)$ are the \emph{Wasserstein distances} and the \emph{L\'evy-Prokhorov distance}. The former are defined by
$$W_p(\mu,\nu) = \inf_{\pi\,\in\,\Pi(\mu,\nu)}\,\left(\int_{X\times X}\,[d(x,y)]^p\;d\pi(x,y)\right)^{1/p}$$
where $p \in [1, +\infty[$ and $\Pi(\mu,\nu)$ denotes the set of probability measures on the product space $X\times X$ with marginals $\mu$ and $\nu$ (see \cite{Vi} and references therein for more details). The latter is defined by
$$LP(\mu,\nu) = \inf\,\Big\{\varepsilon > 0 \colon \,\, \forall\, E\subset X \quad  \forall\, \,\text{$\varepsilon$-neighborhood $V_\varepsilon(E)$ of $E$ one has} $$
$$\quad \quad \quad \quad \quad \quad \quad \quad \quad \nu(E)\leqslant \mu(V_\varepsilon(E))+\varepsilon \quad \text{ and } \quad \mu(E)\leqslant \nu(V_\varepsilon(E))+\varepsilon \Big\}.$$
We refer the reader to \cite{Bil} for more information on these distances.

Whenever we consider in $(X,d)$ the dynamical metric $d_n$, for some $n \in \N$, then the corresponding  Wasserstein  and L\'evy-Prokhorov distances will be denoted by $LP_n$ and $W_{p,n}$, respectively.

\smallskip

\subsection{Quantization order}\label{sse:Qorder}

The quantization of measures, as addressed in \cite{GraLus}, consists in approximating, at arbitrarily small resolution with respect to a fixed distance $D$ in $\cP(X)$, a given measure by another with finite support. 

\begin{definition}\label{def:qn}
Given $\mu\in\cP(X)$ and $\vep > 0$, the \emph{quantization number $Q_{\mu,D}(\vep)$ of $\mu$ at a scale $\vep$ with respect to the metric $D$} is the least positive integer $N$ such that there exists $\nu\in\cP(X)$ supported on a set of cardinality $N$ with $D(\mu, \nu) < \vep$. 
\end{definition}

The previous definition admits a reformulation when $D$ is the L\'evy-Prokhorov metric.

\begin{lemma}\label{lemma0}
\cite[Proposition 3.3]{BB} The quantization number $Q_{\mu, LP}(\vep)$ for the $LP$ metric is the least number of closed balls with radius $\vep$ that cover any subset of $X$ with $\mu$-measure at least $1-\vep$. Therefore, for every $\vep > 0$ and $\mu \in \cP(X)$ 
\begin{equation}\label{eq:QN}
Q_{\mu,LP}(\vep) \,\leqslant \, \mathcal{N}(X,d,\vep).
\end{equation}
\end{lemma}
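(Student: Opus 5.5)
We prove the two assertions of Lemma~\ref{lemma0} separately: first the characterization of $Q_{\mu,LP}(\vep)$ by coverings with closed balls, and then the bound \eqref{eq:QN}. We treat the first as the statement from \cite{BB} and sketch why it holds.

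For the direction ``$Q_{\mu,LP}(\vep)\leq$ covering number'', suppose closed balls $\overline{B}(x_1,\vep),\dots,\overline{B}(x_N,\vep)$ cover a Borel set $A$ with $\mu(A)\geq 1-\vep$. Make the balls disjoint on $A$ by setting $A_i=A\cap \overline{B}(x_i,\vep)\setminus\bigcup_{j<i}\overline{B}(x_j,\vep)$, and put the rest of the mass on $x_1$: let $\nu=\sum_i \mu(A_i)\delta_{x_i}+\mu(X\setminus A)\,\delta_{x_1}$. The transport plan sending $A_i$ to $x_i$ moves every point of $A$ at most distance $\vep$ and fails only on a set of mass at most $\vep$. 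By Strassen's characterization of $LP$, this gives $LP(\mu,\nu)\leq\vep$.

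The definition of $Q_{\mu,LP}(\vep)$ uses strict inequality, so this is the delicate point. We handle it by the convention for closed balls adopted in \cite{BB}, or by a small perturbation of the radius. Conversely, suppose $\nu$ is supported on $\{x_1,\dots,x_N\}$ with $LP(\mu,\nu)<\vep$. Take $E=\{x_1,\dots,x_N\}$ in the definition of $LP$. Then $\mu(V_\vep(E))\geq \nu(E)-\vep=1-\vep$, and $V_\vep(E)$ is contained in the union of the closed balls $\overline{B}(x_i,\vep)$.

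For \eqref{eq:QN}, take a cover of $X$ by $\mathcal{N}(X,d,\vep)$ sets $U_1,\dots,U_k$, each of $d$-diameter less than $\vep$. Discard the empty ones and pick a point $x_i\in U_i$. Then $U_i\subset\overline{B}(x_i,\vep)$, so these $k$ closed balls cover all of $X$, and $\mu(X)=1\geq 1-\vep$. By the characterization just proved, $Q_{\mu,LP}(\vep)\leq k=\mathcal{N}(X,d,\vep)$.

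One can also verify this directly. Let $A_i=U_i\setminus\bigcup_{j<i}U_j$ and $\nu=\sum_i\mu(A_i)\delta_{x_i}$. Since each point of $A_i$ lies within distance less than $\vep$ of $x_i$, we get $\mu(E)\leq\nu(V_\vep(E))$ and $\nu(E)\leq\mu(V_\vep(E))$ for every $E$. This gives $LP(\mu,\nu)\leq\vep$, and in fact strictly less, because the diameters are strictly less than $\vep$ and there are only finitely many sets, so some $\vep'<\vep$ still works.

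The main obstacle is the strict versus non-strict inequality in the first assertion. In \eqref{eq:QN} it is avoided by the compactness and finiteness argument above.
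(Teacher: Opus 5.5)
Your treatment of the strict inequality in the first assertion has a genuine gap. You claim it can be fixed ``by a small perturbation of the radius.'' That is false. In fact the closed-ball characterization does not hold for the definition of $Q_{\mu,LP}$ used in this paper, which requires $LP(\mu,\nu)<\vep$.

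Take $X=\{0,1\}$ with $d(0,1)=1$, $\mu=\frac12(\delta_0+\delta_1)$ and $\vep=\frac12$. The closed ball $\overline{B}(0,\frac12)=\{0\}$ has measure $\frac12=1-\vep$, so the closed-ball count is $1$. However, for $\vep'<1$ every $\vep'$-neighbourhood of a set $E\subset X$ is $E$ itself. Taking $E=\{1\}$ gives $\frac12=\mu(E)\leqslant\delta_0(E)+\vep'=\vep'$, so $LP(\mu,\delta_0)\geqslant\frac12$, and likewise for $\delta_1$. Hence $Q_{\mu,LP}(\frac12)=2$.

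Your Strassen argument only yields $LP(\mu,\nu)\leqslant\vep$, and this example shows nothing can upgrade it to a strict inequality. There are two correct versions:
\begin{itemize}
\item The closed-ball statement is correct when $Q$ is defined with $LP(\mu,\nu)\leqslant\vep$. Your two directions then prove it, with a limit $\vep'\downarrow\vep$ in the converse.
\item If the strict inequality is kept, the right statement is the least number of \emph{open} balls of radius $\vep$ whose union has measure $>1-\vep$.
\end{itemize}
Your converse direction (closed-ball count $\leqslant Q_{\mu,LP}(\vep)$) is fine as written.

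For \eqref{eq:QN}, your first route invokes exactly the direction that fails. By weakening $U_i\subset B(x_i,\vep)$ to closed balls, you also discard the slack that would let the open-ball version apply. This first route is also all the paper does: it cites Berger--Bochi and deduces \eqref{eq:QN} with a ``therefore.''

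Your direct argument is what actually proves \eqref{eq:QN} under the paper's definition, and it is correct. With $\vep''=\max_i \mathrm{diam}(U_i)<\vep$, any $\vep'$ with $\vep''<\vep'<\vep$ is admissible in the definition of $LP$, so $LP(\mu,\nu)<\vep$. The one detail to add is measurability. The sets in the definition of $\mathcal{N}(X,d,\vep)$ need not be Borel, so replace each $U_i$ by its closure, which has the same diameter, before forming the $A_i$.
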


The following is a similar characterization of the quantization number when using a Wasserstein metric.

\begin{lemma}\label{lemma01}\cite[Propositions 3.2 and 3.4]{BB} The quantization number $Q_{\mu,W_p}(\vep)$ for a $W_p$ metric, where $1 \leqslant p < +\infty$, is  the minimal cardinality of the finite sets $F  \subset X$ such that
$$\int_X \, \big(d(x, F)\big)^p\, d\mu(x) \,\leqslant\, \vep^p.$$ 
Moreover, for every $\vep > 0$ and $\mu \in \cP(X)$ 
\begin{equation}\label{eq:QNW}
Q_{\mu,W_p}(\vep) \,\leqslant \, \mathcal{N}(X,d,\vep).
\end{equation}
\end{lemma}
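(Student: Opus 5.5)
The statement has two parts. One is the characterization of $Q_{\mu,W_p}(\vep)$ as the least cardinality of a finite $F\subset X$ with $\int_X d(x,F)^p\,d\mu(x)\leqslant \vep^p$. The other is the bound \eqref{eq:QNW}. I would prove the characterization through a reduction to nearest-point projections, and then obtain the bound by exhibiting an explicit admissible set $F$.

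\emph{From measures to sets.} Let $\nu$ be supported on a finite set $F$, and let $\pi\in\Pi(\mu,\nu)$. Since $\pi$ is concentrated on $X\times F$, we have $d(x,y)\geqslant d(x,F)$ $\pi$-almost everywhere. Hence
$$\int d(x,y)^p\,d\pi\geqslant\int d(x,F)^p\,d\mu(x),$$
and therefore $W_p(\mu,\nu)^p\geqslant \int d(x,F)^p\,d\mu$.

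\emph{From sets to measures.} Conversely, given a finite set $F=\{y_1,\dots,y_N\}$, I would choose a Borel nearest-point map $T\colon X\to F$. For instance, $T(x)=y_i$ for the least index $i$ attaining $d(x,F)$; the sets where this happens are Borel because $x\mapsto d(x,y_j)$ is continuous. Put $\nu=T_*\mu$ and $\pi=(\mathrm{id}\times T)_*\mu$. Then
$$W_p(\mu,\nu)^p\leqslant\int d(x,T(x))^p\,d\mu=\int d(x,F)^p\,d\mu .$$
Moreover, the infimum defining $W_p$ is attained by compactness of $\Pi(\mu,\nu)$, so $\min_{\nu:\,\mathrm{supp}\,\nu\subset F}W_p(\mu,\nu)^p=\int d(x,F)^p\,d\mu$.

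\emph{The characterization.} The previous two steps identify the quantization number with the least $|F|$ satisfying the integral condition. The main obstacle I expect is a strict-versus-non-strict mismatch. Definition~\ref{def:qn} uses $D(\mu,\nu)<\vep$, while the lemma uses $\leqslant\vep^p$. These agree only up to replacing $\vep$ by nearby values. I would handle this by noting that the statement, as cited from \cite{BB}, is meant with matching conventions. Alternatively, one can observe that the two counting functions differ only at the (at most countably many) jump points, which is irrelevant for the orders defined through limits. Finally, the support of $\nu$ may have fewer than $N$ points, but monotonicity in $|F|$ handles this, since adding points only decreases the integral.

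\emph{The bound \eqref{eq:QNW}.} Take a cover of $X$ by $\mathcal{N}(X,d,\vep)$ sets $U_i$ of diameter less than $\vep$, discard the empty ones, and pick $y_i\in U_i$. Let $F=\{y_i\}$. Every $x$ lies in some $U_i$, so $d(x,F)\leqslant d(x,y_i)<\vep$. Hence $\int d(x,F)^p\,d\mu<\vep^p$ strictly, which satisfies either convention. The characterization then gives $Q_{\mu,W_p}(\vep)\leqslant |F|\leqslant\mathcal{N}(X,d,\vep)$.
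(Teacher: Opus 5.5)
Your proof is correct and is the standard one. The paper gives no argument of its own beyond citing \cite[Propositions 3.2 and 3.4]{BB}, and your coupling lower bound, the Borel nearest-point projection $\nu=T_\ast\mu$, and the one-point-per-covering-set construction are the standard proofs of those facts.

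The convention mismatch you flag is genuine, not cosmetic. Under the strict inequality of Definition~\ref{def:qn}, the ``$\leqslant \vep^p$'' characterization is literally false at exceptional scales. Take $\mu=\frac12(\delta_0+\delta_1)$ on $[0,1]$ with $p=1$ and $\vep=\frac12$: every singleton $F$ gives $\int d(x,F)\,d\mu=\frac12$, yet $Q_{\mu,W_1}(\frac12)=2$. So the ``$<\vep^p$'' version you prove is the correct reading.

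It also yields the same quantization orders. Write $Q^{<}$ and $Q^{\leqslant}$ for the two counting functions. Then $Q^{\leqslant}(\vep)\leqslant Q^{<}(\vep)\leqslant Q^{\leqslant}(\vep')$ for all $\vep'<\vep$. It is this monotone sandwich, not merely the countability of the exceptional scales, that makes the discrepancy harmless.
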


\begin{definition}\label{def:qorder}
The \emph{upper} and \emph{lower quantization orders of $\mu\in\cP(X)$ with respect to the metric $D$} are defined by
\begin{eqnarray*}
\mathrm{\overline{qo}_D}(\mu) &=& \limsup_{\vep \, \to \, 0^+} \,\frac{\log \log Q_{\mu,D}(\vep)}{-\log \vep}\\
\mathrm{\underline{qo}_D}(\mu) &=& \liminf_{\vep \, \to \, 0^+} \,\frac{\log \log Q_{\mu,D}(\vep)}{-\log \vep}.
\end{eqnarray*}
\end{definition}

We observe that, by \eqref{eq:QN} and \eqref{eq:QNW}, for every $\mu\in\cP(X)$ one has
$$\mathrm{\underline{qo}_{D}}(\mu) \, \leqslant \, \underline{\mathrm{mo}}\,(X,d)   \quad \quad \text{ and } \quad \quad \, \mathrm{\overline{qo}_{D}}(\mu) \, \leqslant \, \overline{\mathrm{mo}}\,(X,d).$$


\section{Shifts on infinite-dimensional alphabets}\label{se:shifts}

In the space $X^\mathbb{N}$ of the unilateral sequences generated by the alphabet $(X,d)$, endowed with the product topology, we consider the following compatible metric: given $\underline{x} = (x_k)_{k \, \in \, \N}$ and $\underline{y} = (y_k)_{k \, \in \, \N}$, define
\begin{equation}\label{de:rho}
\rho_{_X}(\underline{x},\underline{y}) \,=\, \sum_{k=1}^{+\infty}\, \frac{d(x_k, \,y_k)}{2^{k}}.
\end{equation}
Let $\sigma\colon X^\mathbb{N} \to X^\N$ be the shift, defined by $\sigma\big((x_k)_{k \, \in \, \N}\big) = (x_{k+1})_{k \, \in \, \N}$. It is known (cf.~\cite{VV}) that 
\begin{align}\label{eq:mdim}
\mathrm{\overline{mdim}_M}\,\big(X^\mathbb{N},\rho_{_X},\sigma\big)=\mathrm{\overline{dim}_B}\,\big(X,d\big).
\end{align}
The next result shows that, when $\mathrm{\overline{dim}_B}\,\big(X,d\big) = +\infty$, the equality \eqref{eq:mdim} has a counterpart where the role of the box-counting dimension is played by the metric order.

\begin{maintheorem}\label{thm1}
Let $(X,d)$ be a compact metric space. Then
$$\mathrm{\overline{mdim}_{mo}}\,\big(X^\mathbb{N},\rho_{_X}, \sigma\big) \,=\, \mathrm{\overline{mo}}\,\big(X,d\big)$$
and, for every continuous map  $f\colon X\to X$,
$$\mathrm{\overline{mdim}_{mo}}\,\big(X,d,f\big)\,\leqslant \,\mathrm{\overline{mo}}\,\big(X,d\big).$$
\end{maintheorem}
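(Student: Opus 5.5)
For the second inequality I would argue directly from subadditivity. By \eqref{eq:cov}, $\mathcal{N}(X,n,\vep)\leqslant \mathcal{N}(X,d,\vep)^n$. Hence
$$\limsup_{n\to+\infty}\frac1n\log S(X,n,\vep)\;\leqslant\;\limsup_{n\to+\infty}\frac1n\log\mathcal{N}(X,n,\vep)\;\leqslant\;\log\mathcal{N}(X,d,\vep).$$
Applying $\log^+$, dividing by $|\log\vep|$ and letting $\vep\to0^+$ gives
$$\mathrm{\overline{mdim}_{mo}}(X,d,f)\leqslant \limsup_{\vep\to0^+}\frac{\log^+\log\mathcal{N}(X,d,\vep)}{|\log\vep|}.$$
By the remark after the definition of metric order together with \eqref{eq:srn} and \eqref{eq:box}, the right-hand side equals $\overline{\mathrm{mo}}(X,d)$. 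The $\log^+$ versus $\log$ distinction only matters when $\mathcal{N}$ stays bounded, i.e.\ when $X$ is finite, and then both sides vanish.

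For the shift, the upper bound requires care, because the general inequality above only yields $\overline{\mathrm{mo}}(X^\N,\rho_X)$, which can exceed $\overline{\mathrm{mo}}(X,d)$. Instead I would estimate $\mathcal{N}(X^\N,n,\vep)$ for $\sigma$ directly.

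\begin{itemize}
\item Choose $k=k(\vep)$ with $2^{-k}\operatorname{diam}X<\vep/4$, so $k\approx\log_2(1/\vep)$.
\item Two sequences whose coordinates $1,\dots,n+k$ lie in common sets of $d$-diameter $<\vep/2$ are within $\vep/2+\vep/4<\vep$ in the metric $(\rho_X)_n$.
\item This gives $\mathcal{N}(X^\N,n,\vep)\leqslant\mathcal{N}(X,d,\vep/2)^{n+k}$.
\end{itemize}

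Since $k$ is fixed as $n\to\infty$, the entropy at scale $\vep$ is at most $\log\mathcal{N}(X,d,\vep/2)$. Passing to $\log\log$ over $|\log\vep|$, the factor $\vep/2$ versus $\vep$ is harmless because $|\log(\vep/2)|/|\log\vep|\to1$. So the upper bound is $\overline{\mathrm{mo}}(X,d)$.

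For the lower bound, fix an $\vep$-separated set $E\subset X$ of maximal cardinality $S(X,d,\vep)$. Consider the sequences with $x_j\in E$ for $1\leqslant j\leqslant n$ (arbitrary after that). Two distinct such sequences differ at some coordinate $j\leqslant n$. Applying $\sigma^{j-1}$ moves that coordinate to the first place, which has weight $1/2$, so their $(\rho_X)_n$-distance exceeds $\vep/2$. Therefore
$$S(X^\N,n,\vep/2)\geqslant S(X,d,\vep)^n,$$
and the entropy at scale $\vep/2$ is at least $\log S(X,d,\vep)$. Taking $\log^+$ and dividing by $|\log(\vep/2)|$, whose ratio to $|\log\vep|$ tends to $1$, the $\limsup$ yields $\overline{\mathrm{mo}}(X,d)$ from below.

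The main obstacle is the upper bound for the shift: one must control the tail coordinates uniformly in $n$ so that only an additive $k(\vep)$ appears in the exponent. With that control the remaining steps are bookkeeping with \eqref{eq:srn} and \eqref{eq:box}.
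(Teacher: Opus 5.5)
Your proof is correct. For the shift it follows essentially the paper's route.
\begin{itemize}
\item For the lower bound, both you and the paper use sequences whose first $n$ coordinates run over a maximal separated subset of $X$. The paper takes a $2\varepsilon$-separated set and obtains $\varepsilon$-separation in $(\rho_X)_n$; you take an $\varepsilon$-separated set and obtain $\varepsilon/2$-separation, which amounts to the same thing.
\item For the upper bound, both of you control the first $n+k(\varepsilon)$ coordinates uniformly in $n$. The paper does this with a minimal $\varepsilon$-spanning set of $X$; you do it with covers by small-diameter sets.
\end{itemize}
One small fix in your lower bound: fix the tail (say $x_j=x_0$ for $j>n$) rather than leaving it ``arbitrary''. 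Otherwise two distinct members of your family need not differ at a coordinate $j\leqslant n$.

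Where you genuinely differ is the inequality $\overline{\mathrm{mdim}}_{\mathrm{mo}}(X,d,f)\leqslant\overline{\mathrm{mo}}(X,d)$ for a general continuous $f$.

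The paper deduces it from the shift result. It embeds $(X,f)$ into $(X^{\mathbb N},\sigma)$ via $h(x)=(f^{j-1}(x))_{j\in\mathbb N}$. Because the first coordinate carries weight $1/2$, $h$ sends $(n,\varepsilon)$-separated sets of $f$ to $(n,\varepsilon/2)$-separated sets of $\sigma$. The chain is then
\[
\overline{\mathrm{mdim}}_{\mathrm{mo}}(X,d,f)\leqslant\overline{\mathrm{mdim}}_{\mathrm{mo}}(h(X),\rho_X,\sigma)\leqslant\overline{\mathrm{mdim}}_{\mathrm{mo}}(X^{\mathbb N},\rho_X,\sigma)=\overline{\mathrm{mo}}(X,d).
\]

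You instead use the submultiplicativity $\mathcal{N}(X,n,\varepsilon)\leqslant\mathcal{N}(X,d,\varepsilon)^n$ from \eqref{eq:cov}. This gives $h_\varepsilon(f)\leqslant\log\mathcal{N}(X,d,\varepsilon)$ directly, which is the same mechanism as the standard proof of \eqref{eq:mdimbox}. What your route buys:
\begin{itemize}
\item it needs no conjugacy and no loss of scale;
\item it is logically independent of the shift computation.
\end{itemize}
What the paper's route buys is a structural picture: the full shift over $X$ serves as a universal model dominating every continuous self-map of $X$, so the second inequality becomes a formal consequence of the first.

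Your remark that $\log^+$ versus $\log$ matters only for finite $X$, where both sides vanish, is correct. So is your observation that applying the general bound to $\sigma$ would only give $\overline{\mathrm{mo}}(X^{\mathbb N},\rho_X)$, which is why a direct count is needed for the shift.
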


Since the upper metric order has the intermediate property (cf. \cite[Theorem 4]{CRV4}), the previous result implies that, within the shift maps, the dynamical metric order has this property as well. More precisely, given a compact metric space $(X, d)$ and $\beta$ that satisfies $0 \leqslant \beta \leqslant \mathrm{\overline{mo}}\,\big(X,d\big)$, there exists a subset $Y_\beta \subset X$ such that $\mathrm{\overline{mo}}\,\big(Y_\beta,d\big) = \beta$. Therefore, given 
$\alpha$ such that $0 \leqslant \alpha \leqslant \mathrm{\overline{mdim}_{mo}}\,\big(X^\mathbb{N},\rho_{_X}, \sigma\big)$, by Theorem~\ref{thm1} one has 
$$0 \leqslant \alpha \leqslant \mathrm{\overline{mo}}\,\,\big(X,d\big)$$
so there exists a subshift $Y_\alpha^\N \subset X^\N$ such that $\mathrm{\overline{mdim}_{mo}}\,\big(Y_\alpha^\mathbb{N},\sigma,\rho_{_{Y_\alpha}}\big) = \mathrm{\overline{mo}}\,\big(Y_\alpha,d\big) =  \alpha$.
\smallskip

\begin{example}\label{ex:null}
\emph{Let $(X,d)$ be a compact metric space such that $0 < \mathrm{\overline{dim}_B}\,\big(X,d\big) < +\infty$.
For instance, $X = [0,1]$ or $X= \{0\}\cup\{1/k \colon k \in \N\}$, whose box-counting dimensions are $1$ and $1/2$, respectively. Then, by \eqref{eq:mdim}, one has $\mathrm{\overline{mdim}_M}\,\big(X^\mathbb{N},\rho_{_X},\sigma\big)=\mathrm{\overline{dim}_B}\,\big(X,d\big) > 0.$}
 
\begin{lemma}\label{le:mo1} If $\mathrm{\overline{dim}_B}\,\big(X,d\big) < +\infty$, then $\mathrm{mo}\big(X,d\big) = 0$.
\end{lemma}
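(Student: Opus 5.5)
The plan is to show that the upper metric order vanishes. The lower metric order is nonnegative and never exceeds the upper one, so the two then coincide and $\mathrm{mo}(X,d)=0$. Set $D=\mathrm{\overline{dim}_B}(X,d)<+\infty$ and fix $\delta>0$. Since the box-counting dimension can be computed with separated sets (Subsection~\ref{sse:boxdim}), the definition of $\limsup$ gives some $\vep_0\in\,]0,1[$ such that
$$\log S(X,d,\vep)\,\leqslant\,(D+\delta)\,|\log\vep| \quad \text{for every } 0<\vep<\vep_0.$$

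If $S(X,d,\vep)\leqslant e$, then $\log\log S(X,d,\vep)\leqslant 0$, using the convention $\log 0=0$ when $S=1$. Such scales contribute nothing positive to the $\limsup$. For the remaining scales, take logarithms once more:
$$\log\log S(X,d,\vep)\,\leqslant\,\log(D+\delta)+\log|\log\vep|.$$
Dividing by $|\log\vep|$ gives
$$\frac{\log\log S(X,d,\vep)}{|\log\vep|}\,\leqslant\,\frac{\log(D+\delta)}{|\log\vep|}+\frac{\log|\log\vep|}{|\log\vep|}.$$
Both terms on the right tend to $0$ as $\vep\to0^+$, because $|\log\vep|\to+\infty$ and $\log t/t\to0$ as $t\to+\infty$. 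Hence $\overline{\mathrm{mo}}(X,d)\leqslant0$.

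For the lower bound, note that $S(X,d,\vep)\geqslant1$ always holds, so with the stated convention $\log\log S\geqslant0$ whenever $S\geqslant e$. Scales with $1<S<e$ make the quotient negative, but that quotient still tends to $0$ from below. Indeed, if $X$ is infinite then $S(X,d,\vep)\to\infty$, so such scales eventually disappear. If $X$ is finite, then $S$ is eventually $1$ and the quotient is eventually $0$. So in every case the $\liminf$ is at least $0$, which gives $0\leqslant\underline{\mathrm{mo}}\leqslant\overline{\mathrm{mo}}\leqslant0$.

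The only delicate point is this handling of the degenerate scales where $\log S\leqslant1$, together with the convention $\log0=0$. There is no real obstacle here: the core estimate is just the growth comparison $\log t=o(t)$ applied with $t=|\log\vep|$.
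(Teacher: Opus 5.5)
Your proof is correct and follows the paper's argument: bound $\log S(X,d,\varepsilon)$ by $\big(\mathrm{\overline{dim}_B}(X,d)+\delta\big)|\log\varepsilon|$, take one more logarithm, divide by $|\log\varepsilon|$, and use $\log t = o(t)$. The paper stops at $\overline{\mathrm{mo}}(X,d)\leqslant 0$, so your treatment of the lower order is an addition, but it has one small slip: for finite $X$ the count $S(X,d,\varepsilon)$ is eventually $\#X$, not $1$. This is harmless, since $\log\log S(X,d,\varepsilon)\geqslant \log\log 2$ at every scale (with $\log 0=0$), so the quotient is bounded below by $\log\log 2/|\log\varepsilon|\to 0$ and no case split is needed.
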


\begin{proof}
Given $\delta>0$, take  $\vep_0 = \vep_0(\delta) >0$ such that
\begin{align*}
 0 < \vep < \varepsilon_0 \quad \Rightarrow \quad S(X,d,\varepsilon)< e^{\big(\mathrm{\overline{dim}_B(X,d)}\,+\,\delta\big)\,|\log\vep|}.
\end{align*}
Thus, 
\begin{align*}
  \frac{\log\log S(X,d,\varepsilon)}{|\log\varepsilon|}\,<\,\,\frac{\log \big(\mathrm{\overline{dim}_B(X,d)}\,+\,\delta\big)}{|\log\varepsilon|} + \frac{\log|\log\varepsilon|}{|\log\varepsilon|} \quad\quad \forall\, \vep\, \in\, \,\,]0,\vep_0[
\end{align*}
hence
\[\mathrm{\overline{mo}}\,\big(X,d\big)\,\leqslant\,\limsup_{\varepsilon\,\to\, 0^+}\left(\frac{\log \big(\mathrm{\overline{dim}_B(X,d)}\,+\,\delta\big)}{|\log\varepsilon|} + \frac{\log|\log\varepsilon|}{|\log\varepsilon|}\right)\,=\,0.\]
Thus, $\mathrm{mo}\big(X,d\big) = 0.$
\end{proof}

\emph{Since we are assuming that $\mathrm{\overline{dim}_B}\,\big(X,d\big) < +\infty$, by Theorem~\ref{thm1} and Lemma~\ref{le:mo1}, one has
\[\mathrm{\overline{mdim}_{mo}}\,
\big(X^\N, \rho_{_{X}}, \sigma\big) \,=\, \mathrm{\overline{mo}}\,\big(X,d\big) \,=\, 0. \quad \blacksquare\]}
\end{example} 

We note that Lemma~\ref{le:mo1} implies that, if $\mathrm{\overline{dim}_B}\,\big(X,d\big) < +\infty$, then the dynamical metric order of $(X,d,f)$ is zero for any continuous map $f \colon X \to X$.

\smallskip

The previous example makes plain that, for alphabets $X$ with finite box-counting dimension, the metric mean dimension is a sharper label of complexity for the shift map on $X^\N$ than the dynamical metric order. This is no longer true when the box-counting dimension of $X$ is infinite, as the next example illustrates.

\begin{example}\label{ex:push}
\emph{Let $(Y,d_Y)$ be a compact metric space with $0<\mathrm{dim_B}\,\big(Y,d_Y\big) < +\infty$ and let $X = \mathcal{P}(Y)$ endowed with a metric $D \in \{W_p \colon 1 \leqslant p < +\infty\}\cup \{LP\}$.}

\begin{lemma}\label{le:dim1} 
$\mathrm{mo}\big(\mathcal{P}(Y),D\big) = \mathrm{dim_B}\,\big(Y,d_Y\big)$ and $\mathrm{\overline{dim}_B}\,\big(\mathcal{P}(Y),D\big) = +\infty$.
\end{lemma}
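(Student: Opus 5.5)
We need to show that $\log\log N(\mathcal P(Y),D,\vep)/|\log\vep|$ converges to $\mathrm{dim_B}(Y,d_Y)=:b$ as $\vep\to0^+$, and deduce the infinite box dimension as a by-product. The plan is to sandwich $\log N(\mathcal P(Y),D,\vep)$ between two expressions of the form $N(Y,d_Y,c\,\vep^{a})\cdot|\log \vep|^{\pm O(1)}$. After taking $\log\log$ and dividing by $|\log\vep|$, both bounds give $b$, because $\mathrm{dim_B}(Y,d_Y)$ exists and the polylog factors disappear. Since $\mathcal P(Y)$ is compact for every such $D$, all covering numbers are finite, and by \eqref{eq:box} we may use $S$ or $N$ interchangeably.

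\textbf{Upper bound.} Fix $\vep$ and let $\{y_1,\dots,y_k\}$ be an $\vep$-spanning set of $Y$ with $k=R(Y,d_Y,\vep)$. Let $A_1,\dots,A_k$ be the corresponding Voronoi partition. Send $\mu$ to $\sum_i \mu(A_i)\delta_{y_i}$; this moves mass at most $\vep$, so $W_p\le\vep$, and also $LP\le\vep$. Next approximate the weight vector in the simplex $\Delta_{k}$ by a point of the grid $\frac1m\mathbb Z^k$ with $\ell^1$-error $\le \eta$, taking $m\approx k/\eta$. Reallocating $\ell^1$-mass $\eta$ costs $W_p\le \mathrm{diam}(Y)\,\eta^{1/p}$ and $LP\le\eta$. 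Choose $\eta=\vep^p$, or $\eta=\vep$ for $LP$. The number of grid points is at most $\binom{m+k}{k}\le (C k/\eta)^k$. Hence $\log N(\mathcal P(Y),D,C'\vep)\le k\log(Ck\vep^{-p})$. Since $k\le\vep^{-(b+\delta)}$ for small $\vep$, we get $\log\log N\le (b+\delta)|\log\vep|+O(\log|\log\vep|)$. This gives $\overline{\mathrm{mo}}\le b$, after a harmless rescaling of $\vep$.

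\textbf{Lower bound.} Take a maximal $3\vep$-separated set $\{y_1,\dots,y_k\}\subset Y$, so $k=S(Y,d_Y,3\vep)\ge \vep^{-(b-\delta)}$ for small $\vep$. For each subset $I$ with $|I|=\lfloor k/2\rfloor$, consider the uniform measure $\mu_I$ on $\{y_i:i\in I\}$. Suppose $|I\triangle J|\ge k/4$. Then any coupling of $\mu_I$ and $\mu_J$ must move mass at least $1/4$ a distance at least $3\vep$, so $W_p(\mu_I,\mu_J)\ge 3\vep\cdot 4^{-1/p}\ge c\vep$. For $LP$, take $E=\{y_i:i\in I\setminus J\}$. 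If $r<\min(3\vep,1/4)$, the $r$-neighbourhood of $E$ carries no $\mu_J$-mass, while $\mu_I(E)\ge 1/2$ (up to the rounding). This forces $LP(\mu_I,\mu_J)\ge\min(3\vep,1/4)\ge\vep$ for small $\vep$. A Gilbert--Varshamov style counting (equivalently a Hoeffding bound on random subsets) gives a family of such $I$'s, pairwise with $|I\triangle J|\ge k/4$, of size $e^{ck}$. Therefore $\log S(\mathcal P(Y),D,c\vep)\ge ck\ge c\,\vep^{-(b-\delta)}$. This yields $\underline{\mathrm{mo}}\ge b$, so $\mathrm{mo}(\mathcal P(Y),D)=b$.

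Finally, since $b>0$, the lower bound gives $\log N(\mathcal P(Y),D,\vep)\ge c\,\vep^{-b/2}$, which grows faster than any multiple of $|\log\vep|$. Hence $\overline{\dim}_B(\mathcal P(Y),D)=+\infty$. The main obstacle is the lower bound for $W_p$ with large $p$, and for $LP$ at small scales. There one has to check carefully that the separation is linear in $\vep$ (constant $4^{-1/p}$, uniform in $\vep$) and not something like $\vep^{p}$, which would spoil the exponent. If the coupling argument leaves a gap, I would fall back on the dual Kantorovich--Rubinstein bound with the $1$-Lipschitz test function $d_Y(\cdot,\{y_i:i\in J\})$, using $W_p\ge W_1$.
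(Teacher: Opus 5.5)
Your argument is correct, but it takes a different route from the paper: you reprove from scratch what the paper simply imports.

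\textbf{The paper's route.} The proof is two lines. It cites \cite[Theorem~1.3]{BB}, namely
$\underline{\dim}_B(Z,d_Z)\le\underline{\mathrm{mo}}(\mathcal P(Z),D)\le\overline{\mathrm{mo}}(\mathcal P(Z),D)\le\overline{\dim}_B(Z,d_Z)$ for every compact $Z$. This collapses to $\mathrm{mo}(\mathcal P(Y),D)=\dim_B(Y,d_Y)$ because the box dimension of $Y$ exists. It then gets $\overline{\dim}_B(\mathcal P(Y),D)=+\infty$ by contraposition from Lemma~\ref{le:mo1}: a finite box dimension would force metric order zero, contradicting $\dim_B(Y,d_Y)>0$.

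\textbf{Your route.} Your upper bound (Voronoi quantization onto an $\vep$-net, then a grid on the simplex with $\eta=\vep^p$) and your lower bound (uniform measures on half-subsets of a $3\vep$-separated set, chosen by a constant-weight Gilbert--Varshamov code) amount to a proof of that cited sandwich itself. In fact you recover it in full generality, since the upper bound uses only $\overline{\dim}_B(Y)$ and the lower bound only $\underline{\dim}_B(Y)$. Your lower-bound mechanism is the same one the paper later extracts from \cite{BB} in the proof of Theorem~\ref{thm3}: $A$ pairwise apart measures yield $e^{cA}$ separated ones.

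\textbf{Comparison.} Your approach buys self-containedness and the explicit estimate $\log N(\mathcal P(Y),D,\vep)\ge c\,\vep^{-(b-\delta)}$. From that estimate the infinite box dimension follows directly, without Lemma~\ref{le:mo1}. The paper's approach buys brevity.

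\textbf{Minor points.}
\begin{itemize}
\item With $|I|=|J|=\lfloor k/2\rfloor$ and $|I\triangle J|\ge k/4$ you get $|I\setminus J|\ge k/8$, so $\mu_I(E)\ge 1/4$, not $1/2$. This is exactly what your threshold $\min(3\vep,1/4)$ needs, so $LP(\mu_I,\mu_J)\ge\min(3\vep,1/4)$ stands.
\item Your concern about large $p$ in the lower bound is unnecessary. Your own coupling bound gives $W_p\ge 3\cdot 4^{-1/p}\,\vep\ge\tfrac34\vep$ uniformly in $p$. Alternatively, $W_p\ge W_1$.
\end{itemize}
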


\begin{proof}
Regarding the first equality, recall from \cite[Theorem~1.3]{BB} that, for any compact metric space $(Z,d_Z)$, 
\begin{equation}\label{eq:dimmo}
 \mathrm{\underline{dim}_B}(Z,d_Z)\,\leqslant\, \mathrm{\underline{mo}}\,(\mathcal{P}(Z), D) \,\leqslant\,\mathrm{\overline{mo}}\,(\mathcal{P}(Z), D) \,\leqslant\, \mathrm{\overline{dim}_B}(Z,d_Z).
\end{equation}
Since we are assuming that $\mathrm{\underline{dim}_B}(Y,d_Y) = \mathrm{\overline{dim}_B}(Y,d_Y)$, then \eqref{eq:dimmo} yields
\begin{equation}\label{eq:md}
\mathrm{mo}\big(\mathcal{P}(Y),D\big) \,=\, \mathrm{dim}_B\,\big(Y,d_Y\big).
\end{equation}
The equality $\mathrm{\overline{dim}_B}\,\big(\mathcal{P}(Y),D\big) = +\infty$ is a direct consequence of Lemma~\ref{le:mo1}, the equality \eqref{eq:md} and the hypothesis $\mathrm{dim_B} \,\big(Y,d_Y\big) > 0$.
\end{proof}

\emph{From \eqref{eq:mdim} and Lemma~\ref{le:dim1}, we deduce that
$$\mathrm{\overline{mdim}_M}\,\big(\mathcal{P}(Y)^\mathbb{N}, \rho_{_{\cP(Y)}}, \sigma\big) \, = \,\mathrm{\overline{dim}_B}\,\big(\mathcal{P}(Y),D\big) = +\infty.$$
On the other hand, by Theorem~\ref{thm1} and \eqref{eq:md},
\[0 \,<\, \mathrm{\overline{mdim}_{mo}}\,\big(\mathcal{P}(Y)^\mathbb{N}, \sigma, \rho_{_{\cP(Y)}}\big) \, = \,  \mathrm{mo}\big(\mathcal{P}(Y), D\big) \,=\,  \mathrm{dim_B}\,\big(Y,d_Y\big) \,<\, +\infty. \quad \blacksquare\]}
\end{example} 

\medskip

The previous example also shows that the dynamical metric order can span any non-negative value: for every $0 \leqslant \beta < +\infty$, there is a compact metric space $(Y,d)$ whose box-counting dimension is $\beta$ (cf. \cite{Feng-Zhiying}); hence, $\mathrm{\overline{mdim}_{mo}}\,
\big(\mathcal{P}(Y)^\N, \rho_{_{\cP(Y)}},\sigma\big) = \beta.$

\medskip

The upper quantization order of $\mu\in\cP(X)$, defined in Subsection~\ref{sse:Qorder}, is related to the metric order. More precisely, 
\begin{itemize}
\item \cite[Proposition 3.4]{BB} \emph{For every $\mu \in \cP(X)$, one has $\,\,\mathrm{\overline{qo}_D}(\mu) \,\leqslant \,\mathrm{\overline{mo}}\,\big(X,d\big)$.}
\smallskip
\item  \cite[Theorem 3.9]{BB} \emph{There is $\mu_X \in \cP(X)$ such that $\mathrm{\overline{qo}_D}(\mu_X) \,=\,\mathrm{\overline{mo}}\,\big(X,d\big)$.}
\end{itemize}
\medskip

\noindent The next variational principle is an easy consequence of this information and Theorem~\ref{thm1}.

\begin{corollary}\label{cor:vp}
Let $(X,d)$ be a compact metric space and $D \in \{LP\}\cup\{W_p \colon 1 \leqslant p < +\infty\}$. Then
$$\mathrm{\overline{mdim}_{mo}}\,\big(X^\mathbb{N},\rho_{_X}, \sigma\big) \,=\, \max_{\mu \, \in \, \mathcal{P}(X)}\, \mathrm{\overline{qo}_{D}}(\mu).$$ 
\end{corollary}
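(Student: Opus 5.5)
The corollary follows by chaining Theorem~\ref{thm1} with the two facts from \cite{BB} recalled just before the statement; no new estimates are needed. By Theorem~\ref{thm1},
$$\mathrm{\overline{mdim}_{mo}}\,\big(X^\mathbb{N},\rho_{_X}, \sigma\big) \,=\, \mathrm{\overline{mo}}\,\big(X,d\big).$$
So it suffices to show that $\mathrm{\overline{mo}}\,(X,d)$ coincides with $\sup_{\mu \in \mathcal{P}(X)} \mathrm{\overline{qo}_D}(\mu)$, and that this supremum is attained.

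\emph{Upper bound.} For every $\mu\in\cP(X)$, \cite[Proposition 3.4]{BB} gives $\mathrm{\overline{qo}_D}(\mu)\leqslant \mathrm{\overline{mo}}\,(X,d)$. This is also immediate from \eqref{eq:QN} and \eqref{eq:QNW}: one has $Q_{\mu,D}(\vep)\leqslant \mathcal{N}(X,d,\vep)$, and the metric order may be computed with $\mathcal{N}$ in place of $S$. Since $t\mapsto \log\log t$ is monotone, taking $\limsup$ over $\vep\to 0^+$ gives the inequality. Hence the supremum over $\cP(X)$ is at most $\mathrm{\overline{mo}}\,(X,d)$.

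\emph{Attainment.} By \cite[Theorem 3.9]{BB} there is $\mu_X\in\cP(X)$ with $\mathrm{\overline{qo}_D}(\mu_X)=\mathrm{\overline{mo}}\,(X,d)$. Combined with the upper bound, this shows that the supremum is a maximum, attained at $\mu_X$, and equals $\mathrm{\overline{mo}}\,(X,d)$. Substituting into Theorem~\ref{thm1} yields the stated identity.

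The only point that needs care is that the facts from \cite{BB} hold for each $D$ in the stated family $\{LP\}\cup\{W_p\}$. This is how they are quoted, and Lemmas~\ref{lemma0} and \ref{lemma01} supply the inequality $Q_{\mu,D}\leqslant\mathcal{N}$ for each such $D$. The value $+\infty$ also needs a remark. If $\mathrm{\overline{mo}}\,(X,d)=+\infty$, the argument is unchanged, read in the extended reals: the maximizer $\mu_X$ then has infinite quantization order. If $X$ is finite, both sides vanish under the convention $\log 0=0$.
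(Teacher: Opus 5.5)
Your proposal is correct and follows the paper's route: Theorem~\ref{thm1} reduces the left-hand side to $\overline{\mathrm{mo}}\,(X,d)$. The two facts quoted from \cite{BB} then finish the argument: the bound $\mathrm{\overline{qo}_D}(\mu)\leqslant \overline{\mathrm{mo}}\,(X,d)$ for every $\mu$, and the existence of some $\mu_X$ attaining it, show that the maximum over $\mathcal{P}(X)$ exists and equals this value. Your extra remarks are consistent with the paper but not needed: deriving the upper bound directly from \eqref{eq:QN}--\eqref{eq:QNW}, and treating the infinite and finite cases separately.
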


\section{Dynamical quantization order}\label{se:QDVP}

Recall from Section~\ref{sse:metric} that, given $n \in \N$, we denote by $LP_n$ and $W_{p,n}$ the Wasserstein and L\'evy-Prokhorov distances in $\cP(X)$, respectively, induced by the metric $d_n$ in $X$.
\smallskip

\begin{definition}
Consider $D \in \{LP\}\cup\{W_{p} \colon 1 \leqslant p < +\infty\}$ and, given $n \in \N$, the corresponding dynamical metric $D_n \in \{LP_n\}\cup\{W_{p,n} \colon 1 \leqslant p < +\infty\}$. For $\mu\in\cP(X)$ and $\vep > 0$, the \emph{dynamical quantization number $Q_{\mu,D_n}(\vep)$ of $\mu$ at scale $\vep$} is the least positive integer $K=K(n,\vep,\mu)$ such that there exists $\nu\in\cP(X)$ supported on a set of cardinality $K$ with $D_n(\mu, \nu) < \vep$. The \textit{upper and lower dynamical quantization order} of $\mu $ with respect to the metric $D$ are defined, respectively, by 
\begin{align}\label{dynamical-quantization}
\mathrm{\overline{mdim}_{mo}}\big(X,d,f,D,\mu\big)
&=\,\limsup_{\vep \,\to\,0^+}\,\frac{\log^+\left(\limsup_{n\, \to\, +\infty}\,\frac1n\,\log Q_{\mu,D_n}(\vep)\right)}{|\log\varepsilon|}\\ \nonumber 
\mathrm{\underline{mdim}_{mo}}\big(X,d,f,D,\mu \big)
&=\,\liminf_{\vep\,\to\,0^+}\,\frac{\log^+\left(\limsup_{n\,\to\,+\infty}\,\frac1n\,\log Q_{\mu,D_n}(\vep)\right)}{|\log\varepsilon|}.
\end{align}
\end{definition}

When $f$ is a homeomorphism, the dynamical metric order satisfies a classical variational principle with respect to the upper dynamical quantization order. This property is a consequence of \cite[Theorems~A and B]{CP2026}, whose proofs admit a direct adaptation to the dynamical metric order.

\begin{corollary}\label{thm4}
Let $(X,d)$ be a compact metric space, $f\colon X\to X$ be a homeomorphism and $D$ be a metric in $\{LP\}\cup\{W_{p} \colon 1 \leqslant p < +\infty\}$. Then
\[\mathrm{\overline{mdim}_{mo}}\,\big(X,d,f\big)\,\,=\,
\max_{\mu\,\in\,\cP_f(X)}\,\mathrm{\overline{mdim}_{mo}}\,\big(X,d,f,D,\mu\big). \]
\end{corollary}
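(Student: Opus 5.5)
The plan is to prove two inequalities and then get existence of a maximizer from a compactness/semicontinuity argument, adapting \cite[Theorems~A and B]{CP2026}.

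\emph{Upper bound.} For every $\mu\in\cP(X)$ (invariant or not), every $n$ and every $\vep>0$ we will show $Q_{\mu,D_n}(\vep)\leqslant \mathcal{N}(X,n,\vep)$. This is the analogue of Lemmas~\ref{lemma0} and \ref{lemma01} with $d$ replaced by $d_n$. Take a cover of $X$ by $\mathcal{N}(X,n,\vep)$ sets of $d_n$-diameter $<\vep$. Pick one point in each set and push $\mu$ onto these points, sending the mass of each cover element (made disjoint) to its chosen point. The resulting $\nu$ satisfies $W_{p,n}(\mu,\nu)<\vep$ and $LP_n(\mu,\nu)\leqslant\vep$; a slight rescaling of $\vep$ fixes the strict inequality. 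Since $\log^+$ is nondecreasing, we take $\frac1n\log$, then $\limsup_n$, then $\log^+$, divide by $|\log\vep|$ and take $\limsup_\vep$. Because $\mathcal N$ may replace $S$ in the definition of the dynamical metric order, this gives $\mathrm{\overline{mdim}_{mo}}(X,d,f,D,\mu)\leqslant \mathrm{\overline{mdim}_{mo}}(X,d,f)$.

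\emph{Lower bound.} This is the main obstacle: we must build an invariant $\mu$ whose dynamical quantization numbers grow at the rate of $S(X,n,\vep)$ along a sequence $\vep_k\to0$ realizing the outer $\limsup$. We follow the strategy of \cite{CP2026}, which in turn follows the Misiurewicz proof of the variational principle.

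1. Fix $\vep_k\to 0$ with $\log^+ h_{\vep_k}(f)/|\log \vep_k|\to \mathrm{\overline{mdim}_{mo}}(X,d,f)$.
2. For each $k$, take maximal $(n,c\vep_k)$-separated sets $E_n$ and form the measures $\sigma_n$ equidistributed on $E_n$. Set $\mu_n=\frac1n\sum_{j<n}f^j_*\sigma_n$ and let $\mu_k$ be a limit point, which is invariant.
3. Relate the dynamical quantization at scale $\vep_k$ to a Katok-type count. Any $\nu$ with $D_n(\mu_k,\nu)<\vep$ has support of size $K$ whose $d_n$-balls of radius about $\vep^{1/p}$ (for $W_p$, via Markov's inequality) or $\vep$ (for $LP$) carry $\mu_k$-mass $\geqslant 1-\delta$. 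The growth of $K$ therefore dominates the Katok $\delta$-entropy at a comparable scale.
4. Bound that Katok entropy below by $h_{\vep_k}$-type quantities coming from the construction. Invertibility of $f$ is used here, exactly as in \cite{CP2026}, to control the measure of partition elements along orbits; one could also try partitions with small boundary.

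The polynomial loss in scale (for instance $\vep$ versus $\vep^{1/p}$, or a constant $c$) affects only $\log|\log\vep|$-size terms and constant factors. After $\log^+$ and division by $|\log\vep|$ these vanish, since the numerator is a $\log$ of the entropy at scale $\vep$.

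To produce one measure working for all $k$, we take the convex combination $\mu=\sum_k 2^{-k}\mu_k$. We then check that $Q_{\mu,D_n}(\vep)\geqslant Q_{\mu_k,D_n}(\vep')$ for a slightly modified scale, with an error that again disappears after $\log^+$ and normalisation. Alternatively, we could use upper semicontinuity of $\mu\mapsto \mathrm{\overline{mdim}_{mo}}(X,d,f,D,\mu)$, if it can be established as in \cite[Theorem~B]{CP2026}. In either case the supremum is attained, so it is a maximum.
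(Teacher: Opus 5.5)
Your upper bound is the paper's argument. For the lower bound, the paper, like you, only imports \cite[Theorems~A and B]{CP2026} and \cite[Theorem~1.3]{CYZ}. Your outline of that import, however, rests on a false principle: polynomial changes of scale are \emph{not} negligible for the dynamical metric order. Since
\[
\frac{\log^+ h_{\vep^{a}}(f)}{|\log \vep|} \,=\, a\cdot \frac{\log^+ h_{\vep^{a}}(f)}{|\log \vep^{a}|} \qquad (a>0),
\]
replacing scale $\vep$ by $\vep^{a}$ multiplies the quantity by $a$. The loss is real: in Example~\ref{ex:push}, $\log h_\vep(\sigma)$ is of order $\mathrm{dim_B}(Y,d_Y)\,|\log\vep|$. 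Only two kinds of error disappear after $\log^+$ and normalisation: scale changes $\vep\mapsto\vep^{1+o(1)}$ (for instance constant factors), and additive errors $o(|\log\vep|)$ in $\log h_\vep$. Your picture of $\log|\log\vep|$-size errors is accurate only when $h_\vep(f)$ is polylogarithmic in $1/\vep$, and then the dynamical metric order is $0$ anyway. Concretely, Step~3 is wrong as written. From $\int d_n(x,F)^p\,d\mu<\vep^p$, Markov gives $\mu\{x\colon d_n(x,F)\geqslant r\}<(\vep/r)^p$. So $r=\vep^{1/p}$ gives nothing for $p=1$ and costs a factor $1/p$ for $p>1$. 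Take $r=\vep\,\delta^{-1/p}$ with $\delta$ fixed instead.

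More importantly, this misconception makes Step~4 look routine, when it is the whole content of the lower bound. For each $\vep_k$ you need an invariant measure whose Bowen-ball (equivalently, quantization) count at a scale $\vep_k^{1+o(1)}$ grows at exponential rate at least $\vep_k^{o(1)}\,h_{\vep_k}(f)$. The Misiurewicz measure only yields $h_{\mu_k}(f,\xi)\geqslant h_{\vep_k}(f)$ for partitions with $\mathrm{diam}(\xi)<\vep_k$ and $\mu_k(\partial\xi)=0$. Turning this into a Bowen-ball count requires a radius $\eta$ at which the $\eta$-neighbourhood of $\partial\xi$ has $\mu_k$-measure small compared with $h_{\vep_k}(f)/\log\#\xi$, and nothing ties $\eta$ to $\vep_k$. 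Positivity of the quantity forces $\overline{\mathrm{mo}}(X,d)>0$ (Theorem~\ref{thm1}), so $\log\#\xi$ is huge. Small-boundary estimates, which involve $\eta/\vep_k$ times the local multiplicity of the cover, then do not prevent $\eta$ from being polynomially or even super-polynomially smaller than $\vep_k$ --- precisely the forbidden loss.

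There are further issues. $\mu_k$ need not be ergodic, and a count with mass threshold $1-\delta$ can discard high-entropy ergodic components of total mass below $\delta$. Invertibility also plays no identifiable role in your sketch, which signals that the load-bearing step has been skipped. That step must be imported from \cite{CP2026,CYZ} as a scale-by-scale comparison with only admissible losses; after that the extra $\log$ is harmless, which is all the paper claims. Finally, the convex combination works but has explicit costs: $Q_{\mu,W_{p,n}}(\vep)\geqslant Q_{\mu_k,W_{p,n}}(2^{k/p}\vep)$ and $Q_{\mu,LP_n}(\vep)\geqslant Q_{\mu_k,LP_n}(2^{k}\vep)$. So evaluate $\mu$ at $2^{-k}\vep_k$. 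This adds $k\log 2$ to $|\log\vep|$, so you must choose the $\vep_k$ sparse enough that $k=o(|\log\vep_k|)$.
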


\section{Complexity of the induced map}\label{se:complex}

Let $(X,d)$ be a compact metric space, $f\colon X \to X$ be a continuous map, $\mathcal{P}(X)$ be the space of Borel probability measures on $X$ endowed with the weak$^*$-topology and $f_\ast \colon \mathcal{P}(X) \to \mathcal{P}(X)$ be the map induced by $f$ on $\mathcal{P}(X)$. It is known (cf. \cite{BurguetShi}) that, if $D \in \{W_p \colon 1 \leqslant p < +\infty\}\cup \{LP\}$, then
\begin{equation}\label{eq:top}
\mathrm{h_{top}}(f) > 0\quad \Leftrightarrow \quad \mathrm{\overline{mdim}_M}\,\big(\mathcal{P}(X),D,f_\ast\big) \,=\, +\infty.
\end{equation}
In addition, by \cite{BSigmund,GlasnerWeiss} the following dichotomy holds:
\begin{align}\label{eq:entropy-eq}
\mathrm{h_{top}}(f) \,>\, 0
&\quad \Leftrightarrow \quad \mathrm{h_{top}}(f_\ast)=+\infty\\\nonumber
\mathrm{h_{top}} (f)\,=\, 0 
&\quad \Leftrightarrow \quad \mathrm{h_{top}}(f_\ast) \,=\,0.
\end{align}
Besides, the definitions yield that 
\begin{equation}\label{eq:hm}
\mathrm{h_{top}}(f_\ast) \,=\, 0 \quad \Rightarrow \quad \mathrm{\overline{mdim}_M}\,\big(\mathcal{P}(X),D,f_\ast\big)\,=\,0.
\end{equation}
The equivalence relations \eqref{eq:top} and \eqref{eq:entropy-eq} together with \eqref{eq:hm} imply that the  metric mean dimension of $f_\ast$ only admits two values, namely zero and infinity. In contrast, the dynamical metric order of $f_\ast$ is aware of both the topological entropy of $f$ and the dimension of $X$, and we will show that, under mild assumptions, it exhibits a wider range of values.

\smallskip

The next application of Theorem~\ref{thm1} is an easy consequence of \eqref{eq:dimmo}.

\begin{corollary}\label{cor:push}
Let $(X,d)$ be a compact metric space and $f\colon X\to X$ be a continuous map. If $D \in \{W_p \colon 1 \leqslant p < +\infty\}\cup \{LP\}$, then
$$\mathrm{\overline{mdim}_{mo}}\,\big(\mathcal{P}(X),D,f_\ast\big) \,\leqslant \,\mathrm{\overline{dim}_B}(X,d).$$
\end{corollary}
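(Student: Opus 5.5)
The plan is to combine the second assertion of Theorem~\ref{thm1}, applied to the induced map, with the upper bound in \eqref{eq:dimmo}. Concretely, I would first observe that $(\mathcal{P}(X),D)$ is a compact metric space, since $D$ metrizes the weak$^*$-topology and $\mathcal{P}(X)$ is weak$^*$-compact. I would also check that $f_\ast\colon \mathcal{P}(X)\to\mathcal{P}(X)$ is continuous, because $\int \varphi\, d(f_\ast\mu) = \int \varphi\circ f\, d\mu$ for every continuous $\varphi$.

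With these two facts, the second part of Theorem~\ref{thm1}, applied to the compact metric space $(\mathcal{P}(X),D)$ and the continuous map $f_\ast$, gives
$$\mathrm{\overline{mdim}_{mo}}\,\big(\mathcal{P}(X),D,f_\ast\big)\,\leqslant\,\mathrm{\overline{mo}}\,\big(\mathcal{P}(X),D\big).$$

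Next I would invoke \eqref{eq:dimmo} from \cite[Theorem~1.3]{BB} with $(Z,d_Z)=(X,d)$. It yields $\mathrm{\overline{mo}}\,(\mathcal{P}(X),D)\leqslant \mathrm{\overline{dim}_B}(X,d)$ for every $D\in\{W_p \colon 1\leqslant p<+\infty\}\cup\{LP\}$. Chaining the two inequalities proves the corollary. If $\mathrm{\overline{dim}_B}(X,d)=+\infty$ the statement is trivial, so nothing special needs to be done in that case.

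The only point requiring care is that the metric order in \eqref{eq:dimmo} is computed with the same covering quantity as in our definition of $\mathrm{\overline{mo}}$. Our definition uses $S$, while \cite{BB} may use $N$ or $\mathcal{N}$. By \eqref{eq:box} and \eqref{eq:srn}, these quantities differ only by a change of scale by a factor of $2$. That change does not affect the limsup of $\log\log(\cdot)/|\log\vep|$, since $|\log 2\vep|/|\log\vep|\to 1$. I therefore expect no genuine obstacle; the main step is simply the correct application of Theorem~\ref{thm1} to the new phase space $\mathcal{P}(X)$.
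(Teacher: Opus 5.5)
Your proposal is correct and follows the paper's route. You apply the second assertion of Theorem~\ref{thm1} to the compact system $(\mathcal{P}(X),D,f_\ast)$ and then chain it with the upper bound $\mathrm{\overline{mo}}\,(\mathcal{P}(X),D)\leqslant\mathrm{\overline{dim}_B}(X,d)$ from \eqref{eq:dimmo}; your checks on compactness, continuity of $f_\ast$, and the harmless factor-$2$ rescaling between $S$, $N$ and $\mathcal{N}$ are exactly the details the paper leaves implicit.
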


We observe that, if $\mathrm{\overline{dim}_B}(X,d) = 0$ then, by Corollary~\ref{cor:push}, $\mathrm{\overline{mdim}_{mo}}\,\big(\mathcal{P}(X),D,f_\ast\big) = 0$ for every continuous map $f\colon X\to X$. Assume, otherwise, that $0<\mathrm{\overline{dim}_B}\,\big(X,d\big)< +\infty$ and $f\colon X\to X$ has positive topological entropy; then, by \eqref{eq:top},  
$\mathrm{\overline{mdim}_M}\,\big(\mathcal{P}(X),D,f_\ast\big) = +\infty$, whereas Corollary~\ref{cor:push} ensures that 
$$\mathrm{\overline{mdim}_{mo}}\,\big(\mathcal{P}(X),D,f_\ast\big)\,<\,+\infty.$$
In the next subsection, we discuss a class of functions for which the dynamical metric order of the corresponding induced maps is finite and non-zero.

\subsection{A particular case: Lipschitz maps}

Given $\lambda > 0$, a map $f\colon X\to X$ on a compact metric space $(X,d)$ is $\lambda$-Lipschitz if
$$d\big(f(x), \,f(y)\big) \, \leqslant \, \lambda \, d(x,y)\quad \quad \forall\, x, y \in X$$
and 
$$\lambda \, = \, \min \big\{L \in \,\,]0, +\infty[\colon \, \,d\big(f(x), \,f(y)\big)  \leqslant  L \, d(x,y)\quad \forall\, x, y \in X \big\}.$$ 
\smallskip

\noindent It is known (cf. \cite[Theorem 3.2.9]{KH}) that, when $f\colon X \to X$ is $\lambda$-Lipschitz, then its topological entropy is bounded from above by
\begin{equation}\label{eq:Lip}
\mathrm{h_{top}}(f) \, \leqslant\, \max\,\{0, \, \log \lambda\}\, .\,\mathrm{\overline{dim}_{B}}\,\big(X, d\big).
\end{equation}
Therefore, if $f\colon X \to X$ is $\lambda$-Lipschitz and has a positive topological entropy, then $\lambda > 1$ and the upper box-counting dimension of $(X,d)$ is positive. 
\smallskip

The next property of the dynamical metric order is an immediate consequence of Corollary~\ref{cor:push} and \cite[Corollary 19]{BurguetShi}.

\begin{corollary}\label{thm2}
Let $(X,d)$ be a compact metric space with finite upper box-counting dimension and consider a continuous map $f\colon X\to X$. If $f$ is $\lambda$-Lipschitz and has positive topological entropy, then 
\[0 \,<\,\mathrm{\overline{mdim}_{mo}}\,\big(\mathcal{P}(X),W_1,f_\ast\big)\, < \, +\infty.\]
\end{corollary}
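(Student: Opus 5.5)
The upper bound comes from Corollary~\ref{cor:push}, which gives $\mathrm{\overline{mdim}_{mo}}\,\big(\mathcal{P}(X),W_1,f_\ast\big)\leqslant \mathrm{\overline{dim}_B}(X,d)$, and the right-hand side is finite by hypothesis. So the whole task is the strict lower bound. For that I will show that the entropy of $f_\ast$ at scale $\vep$ grows at least polynomially in $1/\vep$, with a positive exponent.

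Write $h_\vep(f_\ast)=\limsup_{n\to+\infty}\frac1n\log S(\mathcal{P}(X),n,\vep)$, computed with respect to $W_1$ and $f_\ast$. By the definition of the dynamical metric order, it suffices to find $c>0$, $\alpha>0$ and a sequence $\vep_k\to0^+$ such that
$$h_{\vep_k}(f_\ast)\,\geqslant\, c\,\vep_k^{-\alpha}\quad\text{for every } k.$$
Indeed, this gives $\log^+ h_{\vep_k}(f_\ast)\geqslant \log c+\alpha|\log\vep_k|$, and dividing by $|\log\vep_k|$ yields $\mathrm{\overline{mdim}_{mo}}\geqslant\alpha>0$.

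This polynomial growth is the content of \cite[Corollary 19]{BurguetShi}. As I recall it, that result says that for Lipschitz $f$ with positive entropy, the quantity $\log h_\vep(f_\ast)/|\log\vep|$ has a positive limsup, bounded below by a quantity comparable to $\mathrm{h_{top}}(f)/\log\lambda$. This is positive because $\mathrm{h_{top}}(f)>0$, and it forces $\lambda>1$ by \eqref{eq:Lip}. So the proof amounts to invoking that corollary and translating its statement into the notation $\mathrm{\overline{mdim}_{mo}}$, checking that the $\log^+$ convention and the use of separated sets rather than covers do not matter. They do not, by \eqref{eq:srn} and the remarks in Subsection~\ref{sse.mmd}.

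If I had to reprove the lower bound directly, I would argue as follows.
\begin{itemize}
\item Fix a scale $\delta$. Choose an $(m,\delta)$-separated set $E\subset X$ with $\# E\approx e^{m\,\mathrm{h_{top}}(f)}$.
\item Consider the empirical measures $\frac1{N}\sum_{x\in A}\delta_x$ over subsets $A\subset E$ of fixed size $N$. Distinct subsets give $f_\ast$-orbits that are $W_1$-separated at scale about $\delta/N$ along $m$ steps. This yields on the order of $\binom{\#E}{N}$ separated orbit pieces.
\item Let $\vep\approx\delta/N$ with $N\approx\vep^{-1}\delta$. Using the Lipschitz constant to relate $m$ and $\delta$ through $\delta\approx\lambda^{-m}$ (to keep the separation persistent over long times), one gets $h_\vep(f_\ast)\gtrsim N\,\mathrm{h_{top}}(f)$.
\item Optimising $N$ against $\vep$ produces the polynomial power $\vep^{-\alpha}$.
\end{itemize}
The main obstacle is this bookkeeping: controlling the long-time $W_1$-separation of combinations of Dirac masses. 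That is exactly where the Lipschitz hypothesis and the finiteness of the box dimension enter, and I would rely on \cite{BurguetShi} for it rather than redo it.
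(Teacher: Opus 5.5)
Your proposal is correct and is essentially the paper's proof. The upper bound is Corollary~\ref{cor:push}. The lower bound is the Burguet--Shi estimate $h_\vep(f_\ast)\geqslant C\,\vep^{-\gamma}$ for every $0<\gamma<\frac{\mathcal{K}\log 2}{2\mathcal{L}}$, which gives $\mathrm{\overline{mdim}_{mo}}\big(\mathcal{P}(X),W_1,f_\ast\big)\geqslant\frac{\mathcal{K}\log 2}{2\mathcal{L}}>0$ exactly through your reduction. The paper quotes this as \cite[Lemma 17]{BurguetShi}; its displayed left-hand side should read $h_\vep(f_\ast)$ rather than $\log h_\vep(f_\ast)/|\log\vep|$, so your form is the right one.

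Your optional direct sketch does not work as written, for two reasons. First, a point of $A\setminus A'$ may be separated from the various points of $A'$ at different times, so the two empirical measures need not be $W_1$-apart at any common time. Second, tying $\delta\approx\lambda^{-m}$ prevents letting $m\to+\infty$ at a fixed scale. This is why \cite{BurguetShi} works with the Kerr--Li independence constant $\mathcal{K}$, and you are right to defer that step to the citation.
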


In this setting, one always has $\mathrm{\overline{mdim}_{M}}\,\big(\mathcal{P}(X),W_1,f_\ast\big) = +\infty$; therefore, the dynamical metric order provides a better informed estimate of the complexity of $f_\ast$.

\begin{example}
\emph{Consider $X =\{0,1\}^\N$ with the metric  
$$d(\underline{x},\underline{y}) = 
\left\{\begin{array}{lr}
 e^{-k},  \text{ where $k = \min\{j \in \N\colon x_j \neq y_j\}$,} & \quad \text{if this set is nonempty}\\
 0 & \quad \text{ otherwise}
 \end{array}
 \right.$$ and the shift map $\sigma \colon X\to X$. Then $\sigma$ is Lipschitz, has positive topological entropy and $\mathrm{\overline{dim}_B}\,\big(X,d\big) = \log 2$. Indeed:}
 \medskip

\noindent \emph{$(i)$ Given $(x_k)_{k\,\in\,\N}), (y_k)_{k\,\in\,\N} \in X$,
$$d\big(\sigma((x_k)_{k\,\in\,\N}),\,\sigma((y_k)_{k\,\in\,\N})\big) \,=\,d\big((x_{k+1})_{k\,\in\,\N}, \,(y_{k+1})_{k\,\in\,\N})\big) 
\,\leqslant\, e\,d\big((x_k)_{k\,\in\,\N}, \,(y_k)_{k\,\in\,\N}\big).$$}

\noindent \emph{$(ii)$ It is known (cf. \cite{Wa}) that $h_{top}(\sigma) = \log 2$.}
\medskip

\noindent \emph{$(iii)$ 
Given $m\in\mathbb N$, let $\varepsilon=e^{-m}$. Then $S(X,d,\varepsilon)=2^m$, and so $\mathrm{\overline{dim}_B}\,\big(X,d\big)=\log 2$.}
\medskip

\noindent \emph{Therefore, from item $(ii)$ and \eqref{eq:top} we get 
$$\mathrm{\overline{mdim}_{M}}\,\big(\mathcal{P}(X),W_1, \sigma_\ast\big) \,=\, +\infty.$$
On the other hand, by Corollary~\ref{cor:push},  
\[\mathrm{\overline{mdim}_{mo}}\,\big(\mathcal{P}(X),W_1,\sigma_\ast\big) \,\leqslant\, \mathrm{\overline{dim}_B}\,\big(X,d\big) \,=\, \log 2;\]
and, by properties $(i)-(iii)$, Corollary~\ref{thm2} ensures that 
$ \mathrm{\overline{mdim}_{mo}}\,\big(\mathcal{P}(X),W_1,\sigma_\ast\big) > 0. \quad \blacksquare$}
\end{example}

\section{Growth rate of the metric order}\label{se:growth}

The topological emergence of $f$, introduced in \cite{Berger}, evaluates the complexity of a map by the size of the space of its invariant ergodic probability measures. It is defined by
$$Emer_{\mathrm{top}}(f) \, = \,\limsup_{\vep \, \to \, 0^+}\,\frac{\log \log \,\mathcal{E}_{\mathrm{top}}(f)(\vep)}{|\log \vep|}$$
where $\mathcal{E}_{\mathrm{top}}(f)(\vep)$ is the minimal number of $\vep$-open balls of $\mathcal{P}(X)$ in the distance $W_p$ (respectively, $LP$) whose union covers $\mathcal{E}_f(X).$ According to \cite[Section~2]{BB}, for every $1 \leqslant p < +\infty$, 
$$Emer_{\mathrm{top}}(f) \,=\, \mathrm{\overline{mo}}\,\big(\mathcal{E}_f(X), W_p\big)\,=\, \mathrm{\overline{mo}}\,\big(\mathcal{E}_f(X), LP\big).$$

An analogous measure of complexity might be achieved by estimating the size of the space $\cP(X)$ endowed with dynamical metrics. The following class of metrics on $\cP(X)$ was introduced in \cite{BB} and are determined by the Bowen dynamical metrics generated by $f$ on $(X,d)$. More precisely, for each $p\in\,[1,+\infty[$ and $n\in\mathbb{N}$, define
\begin{align}
   W_{p,n}(\mu,\nu) &= \inf_{\pi\,\in\,\Pi(\mu,\nu)}\,\left(\int_{X\times X}\,[d_n(x,y)]^p\;d\pi(x,y)\right)^{1/p}\\
\smallskip & \nonumber \\
LP_n(\mu,\nu) &= \inf\,\mathcal{L}(\mu, \nu, n) \nonumber
\end{align}
where
$$\mathcal{L}(\mu, \nu, n) \, = \, \Big\{\varepsilon > 0 \colon \,\, \forall\, E\subset X \quad  \forall\, \,\text{$\varepsilon$-neighborhood $V_{\varepsilon, \, n}(E)$ of $E$ in the metric $d_n$}$$
$$\quad \quad \quad \quad \quad \quad \quad \quad \nu(E)\,\leqslant\, \mu(V_{\varepsilon, \, n}(E))+\varepsilon \quad \text{ and } \quad \mu(E)\,\leqslant\, \nu(V_{\varepsilon, \, n}(E))+\varepsilon\, \Big\}.$$
Since the notions of metric order and box-counting dimension are metric-dependent, we may wonder how they change when the distance $d$ is replaced by $d_n$. This question is the motivation for the next result.

\begin{maintheorem}\label{thm3}
Let $(X,d)$ be a compact metric space and $f\colon X\to X$ a continuous map. Then
$$\limsup_{n \, \to \, +\infty}\,
\frac{\mathrm{\overline{mo}}\,\big(\mathcal{P}(X), W_{1,n}\big)}{n} \,=\, \limsup_{n \, \to \, +\infty}\,
\frac{\mathrm{\overline{dim}_B}\,\big(X,d_n\big)}{n} \, \geqslant \, \mathrm{\overline{mdim}_M}\,\big(X,d,f\big).$$
\end{maintheorem}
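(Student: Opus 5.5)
The plan has two parts: an equality for each fixed $n$, obtained by applying \eqref{eq:dimmo} to the compact metric space $(X,d_n)$, and then the inequality, obtained by comparing covering numbers in $d_n$ with the separated-set counts in the definition of metric mean dimension.

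\textbf{Step 1: the equality.} For each fixed $n\in\N$, the space $(X,d_n)$ is compact. By construction, $W_{1,n}$ is exactly the Wasserstein distance $W_1$ on $\cP(X)$ built from the metric $d_n$. Applying \eqref{eq:dimmo} with $(Z,d_Z)=(X,d_n)$ and $D=W_1$ gives
$$\mathrm{\underline{dim}_B}(X,d_n)\leqslant \mathrm{\overline{mo}}\,(\cP(X),W_{1,n})\leqslant \mathrm{\overline{dim}_B}(X,d_n).$$
The upper bound immediately yields "$\leqslant$" for the $\limsup$ of the quotients by $n$. The reverse inequality is the delicate point, because \eqref{eq:dimmo} only bounds $\mathrm{\overline{mo}}$ from below by the \emph{lower} box dimension. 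I would therefore not rely on \eqref{eq:dimmo} alone. Instead I would go back to the construction behind \cite[Theorem~1.3]{BB}: from an $\vep$-separated set $E\subset(X,d_n)$ of cardinality $S(X,d_n,\vep)$, one builds $\exp(c\,S(X,d_n,\vep))$ probability measures supported on $E$ that are pairwise $c'\vep$-separated in $W_{1,n}$. These are measures with weights in a suitable grid, chosen via a Gilbert–Varshamov type packing. This gives, scale by scale,
$$\log\log S(\cP(X),W_{1,n},c'\vep)\geqslant \log S(X,d_n,\vep)-C,$$
and hence $\mathrm{\overline{mo}}\,(\cP(X),W_{1,n})\geqslant \mathrm{\overline{dim}_B}(X,d_n)$. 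Combining the two inequalities, the two quantities agree for every $n$, so their $\limsup$'s after dividing by $n$ coincide.

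\textbf{Step 2: the inequality.} Fix $\vep\in\,]0,1[$. By the definition of box dimension along $\delta\to0^+$, together with the elementary relation between scales, $\mathrm{\overline{dim}_B}(X,d_n)$ should be compared with $\log S(X,n,\vep)/|\log\vep|$. The naive comparison fails: a single scale $\vep$ only gives a lower bound on $\log S(X,d_n,\delta)$ for $\delta\leqslant\vep$, and dividing by $|\log\delta|$ can destroy it. The fix is to use the scaling of $d_n$ along the orbit. For a fixed threshold $\eta>0$ and $n$ large, I will look for a scale $\delta_n$ at which
$$\frac{\log S(X,d_n,\delta_n)}{n\,|\log\delta_n|}\gtrsim \frac{\limsup_m\frac1m\log S(X,m,\vep)}{|\log \vep|}.$$
The natural choice is $\delta_n=\vep$ itself. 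Then
$$\frac{\mathrm{\overline{dim}_B}(X,d_n)}{n}\geqslant \frac{1}{n}\sup_{\delta\leqslant \vep}\frac{\log S(X,d_n,\delta)}{|\log\delta|}.$$
The difficulty is that the box dimension is a limit as $\delta\to0$ and does not see a fixed scale. To handle this, I would use the sub-additivity \eqref{eq:cov} together with compactness. For $\delta<\vep$, any $(n,\vep)$-separated set can be refined: covering $X$ by $d_n$-sets of diameter $\delta$ requires at least $S(X,n,\vep)$ pieces, so that
$$\mathcal N(X,n,\delta)\geqslant \mathcal N(X,kn,\vep)^{1/k}$$
after iterating blocks, where $k$ is chosen so that $\delta\approx\vep^{k}$ via a doubling argument on $d_{kn}$ versus $d_n$.

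\textbf{Main obstacle.} This last comparison between $d_n$ at small scale and $d_{kn}$ at scale $\vep$ is the step I expect to fail in general for merely continuous $f$. It is the hardest point of the proof. If it does fail, I would retreat to proving the inequality with $\limsup_n \frac{1}{n}\log S(X,d_n,\vep)$ and deduce the statement through a diagonal choice $\vep=\vep_n\to0$ slowly enough, still relying on \eqref{eq:srn} to pass between separated and covering numbers.
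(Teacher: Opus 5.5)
Your Step 1 is fine and follows the paper's route. The upper bound $\overline{\mathrm{mo}}\,(\cP(X),W_{1,n})\leqslant \mathrm{\overline{dim}_B}(X,d_n)$ holds for each $n$; the paper gets it from the covering estimate of \cite{BGV}. The scale-by-scale lower bound $\log\log S(\cP(X),W_{1,n},\vep/4)\geqslant \log c+\log S(X,d_n,\vep)$ comes from exponentially many well-separated measures supported on a $(n,\vep)$-separated set; the paper phrases this with the $(n,\vep)$-apart measures of \cite{BB}.

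Step 2, however, has a genuine gap: you never prove $\limsup_n \mathrm{\overline{dim}_B}(X,d_n)/n\geqslant \mathrm{\overline{mdim}_M}(X,d,f)$.
\begin{itemize}
\item Your displayed bound $\mathrm{\overline{dim}_B}(X,d_n)/n\geqslant \frac1n\sup_{\delta\leqslant\vep}\log S(X,d_n,\delta)/|\log\delta|$ goes the wrong way. A $\limsup_{\delta\to0^+}$ is bounded \emph{above} by $\sup_{\delta\leqslant\vep}$, not below.
\item Comparing $d_n$ at scale $\vep^k$ with $d_{kn}$ at scale $\vep$ needs Lipschitz-type control, which a continuous $f$ does not give, as you suspected.
\item The fallback "diagonal choice $\vep_n\to0$" is never made precise.
\end{itemize}
The obstruction comes from the order of quantifiers. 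You fix $\vep$ first and then try to control $\mathrm{\overline{dim}_B}(X,d_n)$, which only sees scales $\delta\to0$.

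No change of scale is needed; fix $N$ instead. For every $\vep>0$, the subadditivity \eqref{eq:cov}, Fekete's lemma and $S\leqslant\mathcal N$ from \eqref{eq:srn} give
$$h_\vep(f)\,\leqslant\,\lim_{m\to+\infty}\frac1m\log\mathcal N(X,m,\vep)\,=\,\inf_{m\in\N}\frac1m\log\mathcal N(X,m,\vep)\,\leqslant\,\frac1N\log\mathcal N(X,d_N,\vep).$$
This inequality holds at every scale $\vep$. Dividing by $|\log\vep|$ and taking $\limsup_{\vep\to0^+}$ on both sides, with $N$ fixed, gives $\mathrm{\overline{mdim}_M}(X,d,f)\leqslant \mathrm{\overline{dim}_B}(X,d_N)/N$ for every $N$, which is stronger than what is needed.

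Your own inequality $\mathcal N(X,n,\delta)\geqslant\mathcal N(X,kn,\vep)^{1/k}$ already holds for $\delta=\vep$ directly from \eqref{eq:cov}, with no doubling argument. Letting $k\to+\infty$ in it is exactly this proof. The paper argues the same way: it picks $N$ with $\mathrm{\overline{dim}_B}(X,d_N)/N$ close to the $\limsup$, then iterates \eqref{eq:cov} along multiples of $N$ at a fixed scale.
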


\medskip

Observe that 
$\limsup_{n \, \to \, +\infty}\,\mathrm{\overline{dim}_B}\,\big(X,d_n\big)/n$
is defined like  $\mathrm{\overline{mdim}_M}\,\big(X,d,f\big)$ but with the order of the limits in $\vep$ and $n$ exchanged. In what follows, we refer to it as the \emph{mean box-counting dimension of $(X,d,f)$}, as done in \cite{GutmanSp} with respect to a similar concept used to reformulate the notion of metric mean dimension for subshifts.

\begin{example}
    
\emph{From Theorem~\ref{thm3} and  \eqref{eq:mdim} we deduce that, if $(Y,d_Y)$ is a compact metric space and $\sigma$ is the full shift on $(Y^\N, \rho_Y)$,
then
$$\limsup_{n \, \to \, +\infty}\,
\frac{\mathrm{\overline{dim}_B}\,\big(Y^\N,\,(\rho_Y)_n\big)}{n} \, \geqslant \, \mathrm{\overline{dim}_B}\,\big(Y,d_Y\big).$$
However, this inequality may be strict (see Example~\ref{ex:shift}). An analogous inequality is known in a rather different setting. More precisely, in \cite{CRV3}, the authors showed that, for any $C^0$-generic homeomorphism $f\colon X \to X$ of a compact smooth boundaryless manifold $X$ with topological dimension $\mathrm{dim} X > 1$ endowed with a metric $d$ compatible with its smooth structure, the upper metric mean dimension of $f$ on $(X,d)$ attains its maximum value, which is $\mathrm{dim} X.$ Therefore, from Theorem~\ref{thm3} we conclude that, for $C^0$-generic homeomorphisms $f\colon X \to X$, one has 
$$\limsup_{n \, \to \, +\infty}\,
\frac{\mathrm{\overline{dim}_B}\,\big(X,d_n\big)}{n} \, \geqslant \,\mathrm{dim} X.$$}
\end{example}

\noindent In contrast, in the next example the mean box-counting dimension is strictly smaller than the upper box-counting dimension of the phase space.

\begin{example}
\emph{Let $(X,d)$ be a compact metric space such that $0 < \mathrm{\overline{dim}_B}\,\big(X,d\big) < +\infty$ and $f\colon X\to X$ be a $\lambda$-Lipschitz map. Then $f$ has finite topological entropy and hence zero metric mean dimension.} 
\smallskip

\noindent \emph{$(a)$ If $0 \leqslant \lambda \leqslant 1$ then, for every $n \in \N$ and any pair $x,y \in X$, one has
$$d_n(x,y) \, = \, \max \big\{d(x,y), \cdots, d(f^{n-1}(x), f^{n-1}(y)) \big\} \, = \, d(x,y).$$
Thus, 
$$ \limsup_{n \, \to \, +\infty}\,
\frac{\mathrm{\overline{dim}_B}\,\big(X,d_n\big)}{n} \, = \, \limsup_{n \, \to \, +\infty}\,
\frac{\mathrm{\overline{dim}_B}\,\big(X,d\big)}{n}  \, = \, 0 \, = \, \mathrm{\overline{mdim}_M}\,\big(X,d,f\big) 
\, < \,\mathrm{\overline{dim}_B}\,\big(X,d\big).$$}

\noindent \emph{$(b)$ If $\lambda > 1$ then, for every $n \in \N$ and any pair $x,y \in X$, one has
$$d_n(x,y) \, = \, \max \big\{d(x,y), \cdots, d(f^{n-1}(x), f^{n-1}(y)) \big\} \, \leqslant \, \lambda^n \, d(x,y)$$
and so, for all $\vep > 0$ and $n \in \N$,
$$d_n(x,y) \, \geqslant \, \vep  \quad \Rightarrow \quad d(x,y) \, \geqslant \,\vep / \lambda^n.$$
Therefore,
$$S(X,d_n,\vep) \,\leqslant\, S(X, d,\vep / \lambda^n)$$
and 
\begin{eqnarray*} 
 \limsup_{n \, \to \, +\infty}\,
\frac{\mathrm{\overline{dim}_B}\,\big(X,d_n\big)}{n} & = & \limsup_{n \, \to \, +\infty}\,
\frac{\limsup_{\vep \, \to \, 0^+} \, \frac{\log S\big(X,\,d_n, \,\vep\big)}{|\log \vep|} }{n} \\
& \leqslant & \limsup_{n \, \to \, +\infty}\,
\frac{\limsup_{\vep \, \to \, 0^+} \, \frac{\log S\big(X, \,d, \,\vep / \lambda^n\big)}{|\log \,\vep|}}{n} \\
& = & \limsup_{n \, \to \, +\infty}\,
\frac{\limsup_{\vep \, \to \, 0^+} \, \Big(\frac{\log S\big(X, \,d, \,\vep / \lambda^n\big)}{|\log \,(\vep/\lambda^n)|}\, . \, \frac{|\log \,(\vep/\lambda^n)|}{|\log \vep|}\Big)}{n} \\
& = & \limsup_{n \, \to \, +\infty}\,
\frac{\limsup_{\vep \, \to \, 0^+} \, \frac{\log S\big(X, \,d, \,\vep / \lambda^n\big)}{|\log \,(\vep/\lambda^n)|}}{n} \\
& = & \limsup_{n \, \to \, +\infty}\,
\frac{\mathrm{\overline{dim}_B}\,\big(X,d\big)}{n} \, = \, 0.
\end{eqnarray*}
So,
$$ \limsup_{n \, \to \, +\infty}\,
\frac{\mathrm{\overline{dim}_B}\,\big(X,d_n\big)}{n} \,= \, 0 \,=\, \mathrm{\overline{mdim}_M}\,\big(X,d,f\big) \, < \,\mathrm{\overline{dim}_B}\,\big(X,d\big). \quad \blacksquare$$}
\end{example}

\section{More examples}\label{se:examp}

Here, we address some particularities of the previous concepts and theorems.

\begin{example}\textbf{\emph{The dynamical metric order is metric-dependent.}}\label{ex:Banach}
\medskip

\noindent \emph{Consider the Banach space 
$$\mathcal{B}\,=\,\Big\{(x_n)_{n\,\in\,\mathbb N}\in \mathbb R^{\mathbb N} \colon \,\limsup_{n\,\to\,+\infty}\,|x_n|/n = 0\Big\}$$
endowed with the norm given by $\|(x_n)_{n\,\in\,\mathbb N}\| \,=\,\sup_{n\, \in \, \N} \,|x_n|/n.$
Let $K$ be the compact set
$$K \,=\,\big\{(x_n)_{n\,\in\,\mathbb N}\,\in \mathcal{B}\colon  \,|x_n|\leqslant 1 \quad \forall n \in \mathbb N\} \,=\, [-1,1]^\mathbb{N}$$
and denote by $d$ the distance induced on $K$ by the norm $\|\cdot\|$.}
\smallskip

\emph{Given $0 < \varepsilon < 1$, let $n_0=n_0(\varepsilon)\in\mathbb{N}$ such that 
\[\frac1{n_0+1} \,<\, \frac{\varepsilon}2 \,\leqslant \, \frac1{n_0}.\]
Define $m = \lfloor\frac{1}{\varepsilon}\rfloor$ and the sets 
$$\mathcal{I} \, = \, 
\Big\{0,\,\pm\,\varepsilon/2,\,\pm\,2\varepsilon/2  ,\,\dots,\,\pm \,m\varepsilon/2\Big\} \, \subset \, [-1,1]$$
and
$$L \,=\, \Big\{(x_n)_{n\,\in\,\mathbb N}\in K\colon \,\, x_i \in \mathcal{I} \quad \forall \,1 \leqslant i \leqslant n_0 \,\text{ and } \,x_i = 1 \quad \forall\, i > n_0\Big\}.$$}
\smallskip

\noindent \emph{\textbf{Claim:}} $L$ is an $\varepsilon$-spanning subset of $K$ with cardinality $\left\lfloor 2/\varepsilon \right\rfloor^{n_0}$, and so $\mathrm{\overline{mo}}\,\big(K,d\big)\leqslant 1$.

\medskip

\begin{proof} Clearly, by construction, $L$ has cardinality $\left\lfloor 2/\varepsilon \right\rfloor^{n_0}$. Given $(y_n)_{n\,\in\,\mathbb N} \in K$, take $(x_n)_{n\,\in\,\mathbb N}$ in $L$ such that 
\[|x_n-y_n|\,\leqslant \, \varepsilon/2 \quad \quad \forall \, 1 \leqslant n \leqslant n_0.\]
Then,
\begin{align*}
\|(x_n)_{n\,\in\,\mathbb N} - (y_n)_{n\,\in\,\mathbb N}\| & = \,\max\left\{\sup_{n\,>\,n_0\,+\,1}\,|x_n-y_n|/n,\,\,\max_{1\,\leqslant \, n\,\leqslant \,  n_0}\,|x_n-y_n|/n\right\}\\
&\leqslant \, 
\max\Big\{\varepsilon,\,\varepsilon/2\Big\} \,=\,\varepsilon.
\end{align*}
Thus, $L$ is $\vep$-spanning. Therefore,  
$$\mathrm{\overline{mo}}\,\big(K,d\big)\,\leqslant \, \limsup_{\varepsilon\,\to\,0^+}\,\frac{\log n_0(\varepsilon)+\log\log\frac2\varepsilon}{-\log\varepsilon}\,\leqslant \, \limsup_{\varepsilon\,\to\,0^+}\,\frac{\log \frac2\varepsilon + \log\log\frac2\varepsilon}{-\log\varepsilon}\,=\,1.$$
\end{proof}

\emph{On the other hand,} \\

\noindent \emph{\textbf{Claim:}} $L$ is an $\varepsilon^2/4$-spanning subset of $K$, and so $\mathrm{\overline{mo}}\,\big(K,d\big)\geqslant 1/2$.
\medskip

\begin{proof} Given $(x_n)_{n\,\in\,\mathbb N}\not= (y_n)_{n\,\in\,\mathbb N}$ in $L$, one has
$$\|(x_n)_{n\,\in\,\mathbb N}-(y_n)_{n\,\in\,\mathbb N}\|\,=\,\sup_{n\, \in \,\N}\,|x_n-y_n|/n \,=\,\max_{1\,\leqslant\, n\,\leqslant\, n_0}\,|x_n-y_n|/n \,\geqslant \,  \frac{\varepsilon}{2n_0}\,\geqslant \,\frac{\varepsilon^2}4.
$$
So, $L$ is $\vep$-separated. Hence,
\begin{align*}
\mathrm{\overline{mo}}\,\big(K,d\big)
    &\geqslant \,\limsup_{\varepsilon\,\to\,0^+}\,\frac{\log n_0(\varepsilon)+\log\log\frac4{\varepsilon^2}}{\log\frac4{\varepsilon^2}}\\
    &\geqslant \, \limsup_{\varepsilon\,\to\,0^+}\,\frac{\log \big(n_0(\varepsilon)+1\big)+\log\frac{n_0(\varepsilon)}{n_0(\varepsilon)+1}+\log\log\frac4{\varepsilon^2}}{\log\frac4{\varepsilon^2}}\\
    &\geqslant \,\limsup_{\varepsilon\,\to\,0^+}\,\frac{\log\frac2{\varepsilon}+\log\frac{n_0(\varepsilon)}{n_0(\varepsilon)+1}+\log\log\frac4{\varepsilon^2}}{\log\frac4{\varepsilon^2}} \,= \,\frac12.
\end{align*}
\end{proof}

\emph{Take $T\colon [-1,1]\to [-1,1]$ such that $\mathrm{\overline{mdim}_{M}}\,\big([-1,1], |\cdot|, T\big)=1$. Regarding the existence of such a map $T$, we refer the reader to  Example~\ref{ex:caixinhas}. Define 
\begin{eqnarray*}
&f\colon \,\,\,K& \quad \to \quad K \\
&\quad \quad \quad \quad (x_n)_{n\,\in\,\mathbb N} & \quad  \mapsto \quad \big(T(x_n)\big)_{n\,\in\,\mathbb N}.
\end{eqnarray*}}
\smallskip

\noindent \emph{\textbf{Claim:}} \emph{$0<\mathrm{\overline{mdim}_{mo}}\,\big(K, d, f\big) \leqslant 1$.}
\medskip

\begin{proof} Fix $0<\delta<1$. Since $\mathrm{\overline{mdim}_{M}}\,\big([-1,1], |\cdot|, T\big)=1$, we can take $\varepsilon_0=\varepsilon_0(\delta)>0$ and $n_0=n_0(\varepsilon_0)$ such that, for any $0<\varepsilon<\varepsilon_0$ and $n>n_0$, one has
\[ S([-1,1],n,\varepsilon)\,\geqslant \, e^{n\,\big(|\log\varepsilon|\, (1-\delta)\big)}.\]
For each $n>n_0$ and each $0<\varepsilon<\varepsilon_0$, let $\{x_1,\dots,x_M\}\subset [-1,1]$ be a maximal $(n,\varepsilon)$-separated set of $[-1,1]$ with respect to the map $T$ (hence, $M= S([-1,1],n,\varepsilon)$). Consider the space 
\[A \,=\,\Big\{(z_n)_{n\,\in\,\mathbb N} \in K \colon \, z_n\in\{x_1,\dots,x_M\} \quad \forall \, 1\leqslant n \leqslant n_0 \,\text{ and } \,z_i = 1 \quad \forall\, i\geqslant \lfloor1/ \varepsilon\rfloor\Big\}.\]
We note that $A$ is a finite subset of $K$ whose cardinality satisfies
$$\# A \, \geqslant \, e^{n\big(|\log\varepsilon| \,(1-\delta)\big)/\varepsilon}.$$ 
Moreover, $A$ is a $(n,\varepsilon^2)$-separated subset of $K$ with respect to the map $f$. Indeed, given $(z_n)_{n\,\in\,\mathbb{N}}\not=(w_n)_{n\,\in\,\mathbb{N}}$ in $A$, there exists $0\leqslant i \leqslant 1/\varepsilon$ such that $z_i\not= w_i$. As $z_i$ and $w_i$ are $(n,\varepsilon)$-separated with respect to the map $T$, there exists $0 \leqslant  N < n$ for which 
$$|T^N(z_i)-T^N(w_i)|\,\geqslant\, \varepsilon.$$
This implies that 
$$\big\|f^{N-1}((z_n)_{n\,\in\,\mathbb{N}})-f^{N-1}((w_n)_{n\,\in\,\mathbb{N}})\big\|\,\geqslant\,  \frac{\varepsilon}{1/\varepsilon}\,=\,\varepsilon^2.$$
Consequently,
\[\limsup_{n\,\to\,+\infty}\,\frac1n \log S(K,n,\varepsilon^2) \,\geqslant \, |\log\varepsilon| \,(1-\delta)/\varepsilon\]
and
\[\mathrm{\overline{mdim}_{mo}}\,\big(K,\|\cdot\|, f\big)\,\geqslant \, \frac12.\]
\end{proof}
\end{example}

Consider now in $[-1,1]$ the Euclidean metric $|\cdot|$ and endow $[-1,1]^\N$ with the product topology. Then, by Theorem~\ref{thm1}, 
$$\mathrm{\overline{mdim}_{mo}}\,\big([-1,1]^\mathbb{N},\rho_{[-1,1]}, f\big) \,\leqslant \, \overline{\mathrm{mo}}\,\big([-1,1]^\N,\rho_{[-1,1]}\big) \,=\, 0 \, < \, \mathrm{\overline{mdim}_{mo}}\,\big([-1,1]^\mathbb{N}, \|\cdot\|, f\big),$$ 
where the equality $\overline{\mathrm{mo}}\,\big([-1,1]^\N,\rho_{[-1,1]}\big) \,=\, 0$ is due to the estimates in \cite[Example~1]{LindTsu}. $\quad \,\,\,\blacksquare$
\smallskip

\begin{example}\textbf{\emph{The inequality in Theorem~\ref{thm1} may be strict.}}\label{ex:id}
\medskip

\noindent \emph{Let $(Y,d)$ be a compact metric space with $0<\mathrm{dim}_B\,\big(Y,d\big) < +\infty$ and consider the space $X=\mathcal{P}(Y)$ endowed with a metric $D \in \{W_p \colon 1 \leqslant p < +\infty\}\cup \{LP\}$. 
As explained in Example~\ref{ex:push}, one has 
$$0 \,<\, \mathrm{mo}\big(\mathcal{P}(Y),D\big) \,=\, \mathrm{dim_B}\,\big(Y,d\big) < +\infty.$$
Let $g \colon \mathcal{P}(Y) \to \mathcal{P}(Y)$ be a continuous map with finite topological entropy (e.g. the identity map). Then, due to  \eqref{eq:leq}, 
$$\mathrm{\overline{mdim}_{mo}}\,
\big(\mathcal{P}(Y),D,g\big) \,=\, 0 \,<\, \mathrm{mo}\big(\mathcal{P}(Y),D\big). \quad \blacksquare$$}
\end{example}

\smallskip


\begin{example}\cite[Remark~18]{BurguetShi} \textbf{\emph{The assumptions in Corollary~\ref{thm2} are sharp.}}
\medskip

\noindent \emph{Let $(Y, d_Y)$ be an infinite compact metric space with zero dimension. Then the space $X = Y^\N$ with the product topology may be endowed with a compatible metric $d$ so that $(X,d)$ has zero box-counting dimension. Consider the full shift $\sigma$ on $X$. Since $Y$ is infinite, $\mathrm{h_{top}}(\sigma) > 0$; and, by Corollary~\ref{cor:push}, 
$$\mathrm{\overline{mdim}_{mo}}\,\big(\mathcal{P}(X),W_1,\sigma_\ast\big) \,\leqslant \,\mathrm{\overline{dim}_B}(X,d) \, = \, 0.$$
So Corollary~\ref{thm2} does not apply. This is due to the fact that $\sigma$ is not Lipschitz on $(X,d)$. $\quad \blacksquare$}
\end{example} 
\smallskip


\begin{example}\cite[Remark~18]{BurguetShi} \textbf{\emph{The inequality in the statement of Theorem~\ref{thm3} is sharp.}} \label{ex:shift}
\medskip

\noindent \emph{Consider the shift map $\sigma \colon [0,1]^{\N} \to [0,1]^{\N}$, where $[0,1]$ is endowed with the Euclidean distance. Then, $\mathrm{\overline{mdim}_M}\,\big([0,1]^{\N}, \rho_{[0,1]}, \sigma\big) = 1$ (cf. \cite[Example~1.1]{LindTsu}). Moreover, $\sigma$ is $2$-Lipschitz: given $(x_k)_{k\,\in\,\N}, (y_k)_{k\,\in\,\N} \in [0,1]^{\N}$, 
\begin{eqnarray*}
\rho_{[0,1]}\big(\sigma((x_k)_{k\,\in\,\N}),\,\sigma((y_k)_{k\,\in\,\N})\big) &=& \rho_{[0,1]}\big((x_{k+1})_{k\,\in\,\N},\,\sigma((y_{k+1})_{k\,\in\,\N})\big) \,=\, \sum_{k=1}^{+\infty} 2^{-k} \,|x_{k+1} - y_{k+1}| \\
&=& 2\,\sum_{k=1}^{+\infty} 2^{-(k+1)} \,|x_{k+1} - y_{k+1}|
\,\leqslant\, 2\,\sum_{k=0}^{+\infty} 2^{-(k+1)} \,|x_{k+1} - y_{k+1}|\\
&=& 2\, \rho_{[0,1]}\big((x_k)_{k\,\in\,\N}, \,(y_k)_{k\,\in\,\N}\big).
\end{eqnarray*}
Since $\sigma$ has infinite topological entropy and is Lipschitz, if $d=\rho_{[0,1]}$ then, by \eqref{eq:Lip}, we must have $$\mathrm{\overline{dim}_B}\,\big([0,1]^{\N},d\big)= +\infty.$$
Note also that, since $d_{n+1} \geqslant d_n$ for every $n \in \N$, one has $S([0,1]^\N,d_n,\vep) \,\leqslant \, S([0,1]^\N, d_{n+1},\vep)$ for every $\vep > 0$; thus,
\begin{equation}\label{eq:dn}
\mathrm{\overline{dim}_B}\,\big([0,1]^\N,d_{n+1}\big)\, \geqslant \, 
\mathrm{\overline{dim}_B}\,\big([0,1]^\N,d_n\big). 
\end{equation}
Therefore, $\mathrm{\overline{dim}_B}\,\big([0,1]^{\N},d_n\big)= +\infty$ for every $n \in \N$ and so 
$$\limsup_{n \, \to \, +\infty}\,
\frac{\mathrm{\overline{dim}_B}\,\big([0,1]^{\N},d_n\big)}{n} \, > \, \mathrm{\overline{mdim}_M}\,\big([0,1]^{\N},\rho_{[0,1]}, \sigma\big). \quad \blacksquare$$}
\end{example}
\smallskip

More generally, if $\mathrm{\overline{dim}_B}\,\big(Y,d_Y\big) > 0$, then $(Y^\N, \rho_Y, \sigma)$ has positive upper metric mean dimension, and so infinite topological entropy. As the shift is a Lipschitz map, we must have  $\mathrm{\overline{dim}_B}\,\big(Y^\N,d_Y\big) = +\infty$. Thus, the mean box-counting dimension of $(Y^\N, d_Y, \sigma)$ is $ +\infty$.

\smallskip

\begin{example}\cite[Example IV-C.2]{GutmanSp} \textbf{\emph{The inequality in the statement of Theorem~\ref{thm3} may be strict, even if the metric mean dimension is zero.}} 
\medskip

\noindent \emph{Given $K \in \N$, consider
\begin{eqnarray*}
A_K &=& \big\{[0, 1/2^K]^K \times \{(0,0,\cdots)\}\big\} \,\subset \, [0,1]^\N\\
\mathcal{A}_K &=& \bigcup_{n \, \in \, \N_0}\, \sigma^n (A_K)\\
\mathcal A &=& \bigcup_{K \, \in \, \N}\, \mathcal{A}_K.
\end{eqnarray*}
Then $\mathcal A$ is a subshift of $([0,1]^\N, \rho_{[0,1]})$ (that is, $\mathcal A$ is a non-empty compact $\sigma$-invariant subset of $[0,1]^\N$) which satisfies the following conditions:}
\smallskip
\smallskip

\noindent $(1)$ \emph{For every $\underline{x} \in \mathcal A$, one has $\lim_{n \, \to \, +\infty}\,\sigma^n(x) = \underline{0} =(0,0,\cdots)$. Thus, the non-wandering set of $\sigma_{|\mathcal A}$ is 
$\{\underline{0}\}$ and so 
$$\mathrm{\overline{mdim}_M}\,\big(\mathcal A,\rho_{[0,1]}, \sigma\big)\,=\, 0.$$
\smallskip}

\noindent \emph{$(2)$ On the other hand, if $d=\rho_{[0,1]}$, then
$$\limsup_{n \, \to \, +\infty}\,
\frac{\mathrm{\overline{dim}_B}\,\big(\mathcal A,d_n\big)}{n} \, \geqslant \, 1.$$
Indeed, in \cite[Appendices A \& E]{GutmanSp} it was established that:
\smallskip
\smallskip}

 \emph{$(2a)$  For every $n \in \N$ and $\vep > 0$, 
$$\# \big(\pi_n(\mathcal A),\,\|\,.\,\|_\infty^n, \,\vep\big) \,\leqslant \,S(\mathcal A, d_n,\vep)$$
where $\pi_n \colon \mathcal A \to [0,1]^n$ is the projection given by
$\pi_n \big((x_j)_{j \, \in \, \N}\big) \, = \, (x_1, \cdots, x_n)$,
$$\|(x_j)_{j \,\in \,\{1, \,\cdots, \,n\}}\|_\infty^n \, = \, \max \,\{x_j\colon 1 \leqslant j \leqslant n\}$$
and $\#(B, \Delta, \varepsilon)$ denotes the minimal cardinality of an open cover $\{U_j\}_j$ of $B$ such that each $U_j$ has diameter in the metric $\Delta$ smaller than $\varepsilon$.}
\smallskip

 \emph{$(2b)$ In addition,
$$\lim_{n \, \to \, +\infty}\,\limsup_{\vep \, \to \, 0^+}\, \frac{\# \big(\pi_n(\mathcal A),\,\|\,.\,\|_\infty^n, \,\vep\big)}{n \, |\log \vep|} \,=\, 1.$$}

\medskip

\noindent \emph{From $(2a)$ and $(2b)$, we conclude that
$$\limsup_{n \, \to \, +\infty}\,
\frac{\mathrm{\overline{dim}_B}\,\big(\mathcal A,d_n\big)}{n} \, = \, \limsup_{n \, \to \, +\infty}\,\limsup_{\vep \, \to \, 0^+}\, \frac{S \big(\mathcal A, d_n, \vep\big)}{n\, |\log \vep|} \,\geqslant \, 1 \,> \,\mathrm{\overline{mdim}_M}\,\big(\mathcal A,\rho_{[0,1]}, \sigma\big). \quad \blacksquare$$}
\end{example} 

\smallskip


\begin{example}\cite[Example IV-C.2]{GutmanSp} \textbf{\emph{There exist two maps with infinite topological entropy, equal metric mean dimension and equal dynamical metric order, but distinct mean box-counting dimension.}} \label{ex:caixinhas}
\medskip

\noindent \emph{Consider the interval $[0,1]$ with the Euclidean distance $d=|\cdot|$ and the following $2$-parameter family of interval maps, first introduced in \cite{KolyadaSnoha} and further discussed in \cite{Hazard, VV, CPV, BCP}. Fix a sequence $a\,=\,(a_k)_{k\,\in \,\N\,\cup\,\{0\}}$ of real numbers and a sequence $b\,=\,(b_k)_{k\,\in \, \N}$ of positive odd integers satisfying the following conditions:
\begin{itemize}
\item $(a_k)_{k\,\geqslant\,0}$ is strictly increasing and converges to $1$.
\smallskip
\item $(a_{k}-a_{k-1})_{k\,\in \, \N}$ decreases to zero.
\smallskip
\item $(b_k)_{k\,\in \, \N}$ is strictly increasing (hence $(b_k)_{k\,\in \, \N}$ has limit $+\infty$).
\end{itemize}}

\smallskip

\noindent \emph{Given $k \in \mathbb N$, denote by $f_k\colon[0,1]\to[0,1]$ the unique continuous piecewise affine map with $b_k$ equidistributed full branches such that $f_k$ is increasing in the first one (and, since $b_k$ is odd, also in the last one). Let 
$$J_k \,=\,[a_{k-1},a_k]$$
and $T_k\colon J_k \to [0,1]$ be the unique increasing affine map from $J_k$ onto $[0,1]$.  Define:
\begin{equation}\label{Tab}
\begin{array}{rccc}
T_{a,b} \colon & [0,1] & \rightarrow & [0,1] \\
& x \in J_k & \mapsto & T_k^{-1}\circ f_k \circ T_k\\
& x=1 & \mapsto & 1
\end{array}
\end{equation}
Each map is determined by a sequence of horseshoes that accumulates at the extreme point $1$ (see Figure~\ref{figure1}). The sequence $(a_k)_k$ defines the intervals $(J_k)_k$ where the horseshoes lie, and each $b_k$ represents the number of branches in the $k$-th horseshoe. In particular, the topological entropy of $T_{a,b}$ restricted to $J_k$ is $\log b_k$. Write $\vep_k = |J_k|/b_k$, which represents the length of the injectivity domains in $J_k$.
\smallskip}

\begin{figure}[!htb] 
\begin{center}
\includegraphics[scale=0.4]{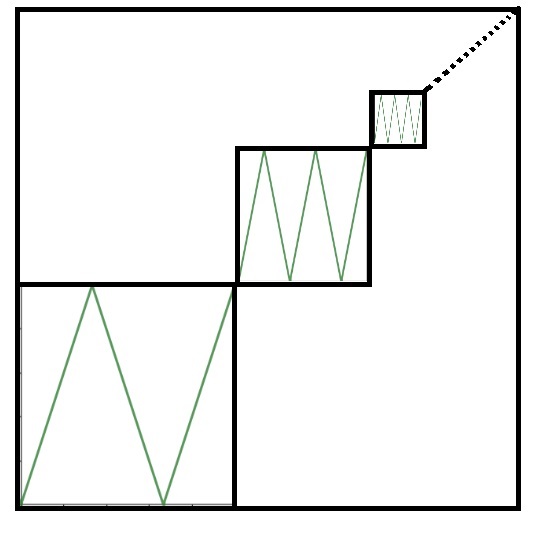}
\caption{Graph of the map $T$ (reproduced from \cite{BCP}).}
\label{figure1}
\end{center}
\end{figure}

\emph{For every pair $a,b$, the map $T_{a,b}$ has infinite topological entropy since a classical result of Misiurewicz (see \cite{Mis}) yields 
$$\mathrm{h_{top}}(T_{a,b})\,=\,\limsup_{k\,\to\,+\infty}\,\log b_k\,=\,+\infty.$$
The metric mean dimension has been computed for specific choices of the parameters (see for instance \cite{VV} and \cite{CPV}). In general, one has (cf. \cite{BCP})
\begin{eqnarray}\label{mismdim}       \nonumber\overline{\mathrm{mdim}}_M([0,1],|\cdot|,T_{a,b})&=&\limsup_{k\,\to\,+\infty}\,\frac{\log b_k}{\log(1/\vep_k)} \\ \nonumber\underline{\mathrm{mdim}}_M([0,1], |\cdot|,T_{a,b})&=&\liminf_{k\,\to\,+\infty}\,\frac{\log b_k}{\log(1/\vep_k)}.
\end{eqnarray}
For instance:}
\smallskip

\noindent \emph{$(1)\,\,$ If $a_k\,=\,\sum_{n=1}^k 6/(\pi n^2)$ and $b_k\,=\,3^k$, then $\mathrm{mdim}_M([0,1],T_{a,b},|\cdot|)\,=\,1$.}
\medskip

\noindent \emph{$(2)\,\,$ Given $0<\beta<1$, if $a_k\,=\, \sum_{n=1}^k C(\beta)\,3^{n\,(1-\frac{1}{\beta})}$, where $C(\beta)\,=\,\big(\sum_{n=1}^{+\infty} 3^{n\,(1-\frac{1}{\beta})}\big)^{-1}$, and $b_k\,=\,3^k$, then $\mathrm{mdim}_M([0,1],T_{a,b},|\cdot|)\,=\,\beta$.}
\medskip

\noindent \emph{$(3)\,\,$   If $a_k\,=\, 1/2^{k}$ and $b_k\,=\,2k+1$, then $\mathrm{mdim}_M([0,1],T_{a,b},|\cdot|)\,=\,0$.}

\medskip

\noindent \emph{We observe that, since $\mathrm{dim}_B([0,1]) = 1 < +\infty$, by Lemma~\ref{le:mo1} and Theorem~\ref{thm1} one has
$$\mathrm{mdim_{mo}}\,\big([0,1], T_{a,b}, |\cdot|\big) = 0.$$}

\emph{Regarding example $(1)$ and its mean box-counting dimension, Theorem~\ref{thm3} yields
\begin{equation}\label{eq:mbd1}
\limsup_{n \, \to \, +\infty}\,
\frac{\mathrm{\overline{dim}_B}\,\big([0,1],d_n\big)}{n} \, \geqslant \, \mathrm{mdim}_M([0,1],T_{a,b},|\cdot|) \,=\, 1.
\end{equation}
On the other hand (cf. \cite[Proposition~10.1]{CRV3}), for all $n \in \N$ and $0 < \vep < \vep_k$,
$$S(T_{a,b}, n, \vep) \, \leqslant \, \Big\lfloor \frac{1}{\vep_k}\Big\rfloor^n\, \times \, \frac{\vep_k}{\vep}$$
so, since 
$$\Big\lfloor \frac{1}{\vep_k} \Big\rfloor\, \times \, \vep_k \, \leqslant \, \frac{1}{\vep_k}\, \times \, \vep_k \, = 1$$
we have
$$
\log S(T_{a,b}, n, \vep) \,\leqslant \, \log \Big(\Big\lfloor \frac{1}{\vep_k}\Big\rfloor^{n-1}\Big)\, + \, \log \frac{1}{\vep} \,=\, (n-1) \, \log \Big\lfloor \frac{1}{\vep_k}\Big\rfloor \, + \, \log \frac{1}{\vep} \,\leqslant \, n \, \log \frac{1}{\vep}.$$
Thus,
$$\limsup_{\vep \, \to \, 0^+}\,\frac{\log S(T_{a,b}, n, \vep) }{|\log \vep|}\, \leqslant n$$
hence, 
\begin{equation}\label{eq:mbd2}
\limsup_{n \, \to \, +\infty}\,\frac{\limsup_{\vep \, \to \, 0^+}\,\frac{\log S(T_{a,b}, n, \vep)}{|\log \vep|}}{n}\, \leqslant 1.
\end{equation}
From \eqref{eq:mbd1} and \eqref{eq:mbd2}, we conclude that
$$\limsup_{n \, \to \, +\infty}\,
\frac{\mathrm{\overline{dim}_B}\,\big([0,1],d_n\big)}{n} \,=\, 1.$$}

\medskip

\emph{Consider now the shift map $\sigma \colon [0,1]^{\N} \to [0,1]^{\N}$ discussed on Example~\ref{ex:shift}. We know that $\mathrm{\overline{mdim}_M}\,\big([0,1]^{\N}, \rho_{[0,1]},  \sigma\big) = 1$, hence $h_{\mathrm{top}}(\sigma)=+\infty$,  and, by Theorem~\ref{thm1} and Lemma~\ref{le:mo1}, 
$$\mathrm{mdim_{mo}}\,\big([0,1]^\N,  \rho_{[0,1]},  \sigma\big) \,=\, \mathrm{mo}\big([0,1], |\cdot|\big) \, = \, 0.$$
Yet, we have already verified (cf. Example~\ref{ex:shift}) that the mean box-counting dimension of $([0,1]^\N, \sigma, \rho_{[0,1]})$ is $+\infty$. 
$ \quad \blacksquare$}
\end{example}

\section{Basic properties of the dynamical metric order}\label{se:basic}

In what follows, it will be useful to recall that, given two sequences $(a_n)_{n \, \in\, \N}$ and $(b_n)_{n \, \in\, \N}$ of real numbers, then
\begin{equation}\label{eq:sum}
\limsup_{n\, \to\, +\infty}\, (a_n + b_n) \,\leqslant\, \limsup_{n\, \to\, +\infty}\, a_n + \limsup_{n\, \to\, +\infty}\, b_n    
\end{equation}
and that, if $a_n \geqslant 0$ and $b_n \geqslant 0$ for every $n \in \N$ (or for $n$ large enough), 
\begin{equation}\label{eq:prod}
\limsup_{n\, \to\, +\infty}\, (a_n \cdot b_n) \,\leqslant\, \big(\limsup_{n\, \to\, +\infty}\, a_n\big) \cdot \big(\limsup_{n\, \to\, +\infty}\, b_n\big)    
\end{equation}
unless the product at the right-hand side of this inequality is $\infty \cdot 0$ or $0 \cdot \infty$.

\subsection{Power of a map}
Given a continuous map $f\colon X\to X$ on the compact metric space $(X,d)$, it is known (cf. \cite{Wa}) that, for every $\ell \in \N$,
$$\mathrm{h_{top}}(f^\ell) = \ell \cdot \mathrm{h_{top}}(f)$$
and (cf. \cite{LW2000})
$$\mathrm{\overline{mdim}_{M}}\,\big(X,d,f^\ell\big)\, \leqslant\,\ell \cdot \mathrm{\overline{mdim}_{M}}\,\big(X,d,f\big).$$
Due to the impact of applying the map $\log$ twice to define the  metric order, a formula for the dynamical metric order of the power of a map is slightly different.

\begin{proposition}
Let $f\colon X\to X$ be a continuous map acting on the compact metric space $(X,d)$ such that 
$0 < \mathrm{\overline{mdim}_{mo}}\,\big(X,d,f\big) < +\infty$. Then, for any $\ell\in\mathbb N$ there exists $C_\ell\in [0,1]$ such that
    \[C_\ell\cdot\mathrm{\overline{mdim}_{mo}}\,\big(X,d,f\big)\, \leqslant\,\mathrm{\overline{mdim}_{mo}}\,\big(X,d,f^\ell\big)\,
    \leqslant \, \mathrm{\overline{mdim}_{mo}}\,\big(X,d,f\big).\]
If, in addition, $f$ is a Lipschitz map, then $C_\ell=1$ and 
    \[\mathrm{\overline{mdim}_{mo}}\,\big(X, d,f\big) \,=\,\mathrm{\overline{mdim}_{mo}}\,\big(X,d,f^\ell\big).\]
\end{proposition}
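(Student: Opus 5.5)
We compare the scale-$\vep$ entropies of $f$ and $f^\ell$. Write $h_\vep(f)=\limsup_n \frac1n\log S(X,n,\vep)$, computed with $d_n^f$, and similarly $h_\vep(f^\ell)$ with the Bowen metric $d_n^{f^\ell}$. The plan is to sandwich $h_\vep(f^\ell)$ between multiples of $h_\vep(f)$ at comparable scales. Then we pass through $\log^+$ and divide by $|\log\vep|$; the factor $\ell$ disappears in the limit because $\log\ell/|\log\vep|\to 0$.

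\emph{Upper bound.} Since $d_n^{f^\ell}(x,y)\leqslant d_{\ell n}^{f}(x,y)$, we have $S_{f^\ell}(X,n,\vep)\leqslant S_f(X,\ell n,\vep)$. Hence
$$h_\vep(f^\ell)\leqslant \limsup_n \tfrac{1}{n}\log S_f(X,\ell n,\vep)\leqslant \ell\, h_\vep(f).$$
Therefore $\log^+ h_\vep(f^\ell)\leqslant \log\ell+\log^+ h_\vep(f)$. Dividing by $|\log\vep|$ and taking $\limsup_{\vep\to0^+}$, using \eqref{eq:sum}, gives $\mathrm{\overline{mdim}_{mo}}(X,d,f^\ell)\leqslant \mathrm{\overline{mdim}_{mo}}(X,d,f)$. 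This step is routine.

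\emph{Lower bound in the Lipschitz case.} If $f$ is $\lambda$-Lipschitz with $\lambda\geqslant1$, then $d_{\ell n}^f\leqslant \lambda^{\ell} d_n^{f^\ell}$. So an $(\ell n,\vep)$-separated set for $f$ is $(n,\vep/\lambda^\ell)$-separated for $f^\ell$, which gives $\ell\,h_\vep(f)\leqslant h_{\vep/\lambda^\ell}(f^\ell)$. Put $\delta=\vep/\lambda^\ell$. Then $|\log\delta|/|\log\vep|\to1$, so
$$\frac{\log^+ h_\vep(f)}{|\log\vep|}\leqslant \frac{\log^+ h_\delta(f^\ell)}{|\log\delta|}\cdot\frac{|\log\delta|}{|\log\vep|}.$$
Taking $\limsup$ through a sequence $\vep_k\to0$ realizing $\mathrm{\overline{mdim}_{mo}}(X,d,f)$ yields the reverse inequality. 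Hence $C_\ell=1$.

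\emph{General case.} Here we would use uniform continuity. Given $\vep$, choose $\eta(\vep)\leqslant\vep$ such that $d(x,y)<\eta$ implies $d(f^j x,f^j y)<\vep$ for all $0\leqslant j<\ell$. Then $d_{\ell n}^f$-$\vep$-separation implies $d_n^{f^\ell}$-$\eta$-separation, so $\ell h_\vep(f)\leqslant h_{\eta(\vep)}(f^\ell)$. From this we get
$$\frac{\log^+h_\vep(f)}{|\log\vep|}\leqslant \frac{\log^+h_{\eta}(f^\ell)}{|\log\eta|}\cdot\frac{|\log\eta(\vep)|}{|\log\vep|}.$$
Define
$$C_\ell=\liminf_{\vep\to0^+}\frac{|\log\vep|}{|\log\eta(\vep)|}\in[0,1],$$
choosing $\eta$ as large as permitted (the modulus of continuity of $f,\dots,f^{\ell-1}$). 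Taking limsups along a realizing sequence then gives $C_\ell\,\mathrm{\overline{mdim}_{mo}}(X,d,f)\leqslant\mathrm{\overline{mdim}_{mo}}(X,d,f^\ell)$. The hypothesis $0<\mathrm{\overline{mdim}_{mo}}<+\infty$ rules out the indeterminate products $0\cdot\infty$ in \eqref{eq:prod}.

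The main obstacle is this general case: $C_\ell$ may well be $0$, for instance with Hölder-type or worse moduli, and then the statement is weak but still true. The care needed is in the bookkeeping of $\limsup$ versus $\liminf$ when splitting the product. We handle it by fixing a sequence $\vep_k$ that realizes the limsup for $f$ and bounding each factor along the corresponding $\eta(\vep_k)\to0$.
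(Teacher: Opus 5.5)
Your proof is correct and follows essentially the same route as the paper: the comparison $S_{f^\ell}(X,n,\vep)\leqslant S_f(X,\ell n,\vep)$ gives the upper bound. For the lower bound you use the same uniform-continuity rescaling $\vep\mapsto\eta(\vep)$, with $C_\ell$ the reciprocal of $\limsup_{\vep\to0^+}\log\eta(\vep)/\log\vep$; in the Lipschitz case $\eta(\vep)$ is a constant multiple of $\vep$, so $C_\ell=1$.

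One step needs a justification you did not give. The identity $\ell\,h_\vep(f)=\limsup_n\frac1n\log S_f(X,\ell n,\vep)$ holds because $m\mapsto S_f(X,m,\vep)$ is nondecreasing. You could also bypass it, as the paper does, by using only $S_f(X,n,\vep)\leqslant S_f(X,\ell n,\vep)$, since the factor $\ell$ disappears after taking $\log$ and dividing by $|\log\vep|$.
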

\smallskip

\begin{proof} 
Fix a continuous map $f\colon X\to X$ and $\ell\in\mathbb N$.  To distinguish the separated sets of $f\colon X \to X$ from the ones of $f^\ell\colon X \to X$, for the moment we denote by $S(Y,n,\varepsilon, g)$ the maximal cardinality of the $(n,\vep)$-separated subsets of $Y$ under the action of a map $g\colon Y \to Y$.

Given $\varepsilon>0$, 
$$S(X,n,\varepsilon,f^\ell)\,\leqslant\, S(X,\ell n,\varepsilon, f) \quad \quad \forall\, n\in\mathbb N$$
and so
\begin{eqnarray*}
\limsup_{n\, \to\, +\infty}\,\frac{1}{n}\,\log S(X,n,\varepsilon, f^\ell)  &\leqslant& \ell \cdot\limsup_{n\, \to\, +\infty}\,\frac{1}{\ell n}\,\log S(X,\ell n,\varepsilon,f) \nonumber \\
\frac{\log^+ \big(\limsup_{n\, \to\, +\infty}\,\frac{\log S(X,\,n,\,\varepsilon, \,f^\ell)}{n}\big)}{|\log \vep|}  & \leqslant &  \frac{\log \ell}{|\log \vep|} +  \,\frac{\log^+ \big(\limsup_{n\, \to\, +\infty}\,\frac{\log S(X,\,\ell n,\,\varepsilon,\,f)}{\ell n}\big)}{|\log \vep|}.
\end{eqnarray*}
Therefore, using \eqref{eq:sum} we get
\begin{equation} \label{eq:power}
\mathrm{\overline{mdim}_{mo}}\,\big(X,d,f^\ell\big) \,\leqslant \,\mathrm{\overline{mdim}_{mo}}\,\big(X,d,f\big).
\end{equation}

\smallskip
Notice that, to deduce \eqref{eq:power} we do not need any assumption on $\mathrm{\overline{mdim}_{mo}}\,\big(X,d,f\big)$. Yet, if we suppose that 
$\mathrm{\overline{mdim}_{mo}}\,\big(X,d,f\big) < +\infty$, then \eqref{eq:power} yields 
$\mathrm{\overline{mdim}_{mo}}\,\big(X,d,f^\ell\big) < +\infty$ and, in particular, if 
$\mathrm{\overline{mdim}_{mo}}\,\big(X,d,f\big) = 0$, one also has  
$\mathrm{\overline{mdim}_{mo}}\,\big(X,d,f^\ell\big) = 0$.
\smallskip

On the other hand, since $f$ is continuous and $X$ is compact, given $\varepsilon>0$ there exists $\delta(\varepsilon)>0$ such that
\begin{align*}
d(x,y)\,<\,\delta(\vep) \quad \Rightarrow \quad \max\big\{d(f^i(x),f^i(y)) \colon \,\, 0 \leqslant i \leqslant  \ell\big\} < \varepsilon.
\end{align*}
Thus, for every $n \in \N$,
\[S(X,n,\delta(\varepsilon),f^\ell)\,\geqslant \,  S(X, n,\varepsilon,f).\]
So, 
\begin{eqnarray*}
\limsup_{n\, \to\, +\infty}\,\frac{1}{n}\,\log S(X,n,\delta(\vep), f^\ell)  &\geqslant& \limsup_{n\, \to\, +\infty}\,\frac{1}{n}\,\log S(X,n,\varepsilon,f) \\
\frac{\log^+ \big(\limsup_{n\, \to\, +\infty}\,\frac{\log S(X,\,n,\,\delta(\vep), \,f^\ell)}{n}\big)}{|\log \vep|}  & \geqslant &  \frac{\log^+ \big(\limsup_{n\, \to\, +\infty}\,\frac{\log S(X,\, n,\,\varepsilon,\,f)}{n}\big)}{|\log \vep|}\\
\frac{\log^+ \big(\limsup_{n\, \to\, +\infty}\,\frac{\log S(X,\,n,\,\delta(\vep), \,f^\ell)}{n}\big)}{|\log \delta(\vep)|}\cdot \frac{|\log \delta(\vep)|}{|\log \vep|} 
& \geqslant &  \frac{\log^+ \big(\limsup_{n\, \to\, +\infty}\,\frac{\log S(X,\, n,\,\varepsilon,\,f)}{n}\big)}{|\log \vep|}.
\end{eqnarray*}
Letting
$$\Delta_\ell \, = \, \displaystyle\limsup_{\varepsilon\,\to\,0^+}\,\frac{\log \delta(\varepsilon)}{\log \varepsilon}$$
and taking into account that $\mathrm{\overline{mdim}_{mo}}\,\big(X,d,f^\ell\big) < +\infty$, we deduce by applying \eqref{eq:prod} that 
\begin{equation*}
\Delta_\ell\cdot\mathrm{\overline{mdim}_{mo}}\,\big(X,d,f^\ell\big)\,\geqslant \, \mathrm{\overline{mdim}_{mo}}\,\big(X,d,f\big).
\end{equation*}
Moreover, since $\mathrm{\overline{mdim}_{mo}}\,\big(X,d,f\big) > 0$ by assumption, then we must have
$\Delta_\ell >0$. Consequently, if $C_\ell = 1/\Delta_\ell$, then
\begin{equation*}
\mathrm{\overline{mdim}_{mo}}\,\big(X,d,f^\ell\big)\,\geqslant \, C_\ell\cdot\mathrm{\overline{mdim}_{mo}}\,\big(X,d,f\big).
\end{equation*}
\smallskip

When $f$ is Lipschitz, that is, if there is a constant $L > 0$ such that
$$d(f(x),\,f(y))\,\leqslant \, L \, d(x,y) \quad \forall\, x,y \in X$$
then we can choose 
$$\delta(\vep)\,=\, \frac{1}{2}\, \min\,\Big\{\frac{\vep}{L^j} \colon \, 0 \leqslant j \leqslant \ell\Big\}$$
and so $C_\ell=1$ and $\mathrm{\overline{mdim}_{mo}}\,\big(X,d,f^\ell\big) = \mathrm{\overline{mdim}_{mo}}\,\big(X,d,f\big)$, as claimed. 
\end{proof}

\subsection{Product of maps}
Given continuous maps $f\colon X\to X$ and $g\colon Y\to Y$ acting on compact metric spaces $(X,d_X)$ and $(Y,d_Y)$, respectively, let $Z=X\times Y$ be the product space endowed with the product metric 
$$d_Z\big((x_1,y_1),(x_2,y_2)\big) \, = \, \max\,
\big\{d_X(x_1,x_2),d_Y(y_1,y_2)\big\}$$
and $f\times g\colon Z\to Z$ be the map defined by $(f\times g)(x,y) = \big(f(x),g(y)\big)$. It is known (cf. \cite{Wa}) that
$$h_{top}(f\times g) = h_{top}(f) + h_{top}(g)$$
and (cf. \cite{LW2000})
$$\mathrm{\overline{mdim}_{M}}\,\big(Z,d_Z,f\times g\big)\, \leqslant\, \mathrm{\overline{mdim}_{M}}\,\big(X,d_X,f\big) + \mathrm{\overline{mdim}_{M}}\,\big(Y,d_Y,g\big).$$
Under a mild assumption, a similar relation is valid for the dynamical metric order.

\begin{proposition}
Let $f\colon X\to X$ and $g\colon Y\to Y$ be continuous maps with infinite topological entropy, acting on compact metric spaces $(X,d_X)$ and $(Y,d_Y)$, respectively. Then,
\begin{equation}\label{eq:product}
\mathrm{\overline{mdim}_{mo}}\,\big(Z, f\times g,d_Z\big)\,\leqslant \, \mathrm{\overline{mdim}_{mo}}\,\big(X,d_X,f\big) + \mathrm{\overline{mdim}_{mo}}\,\big(Y,d_Y,g\big).
\end{equation}
\end{proposition}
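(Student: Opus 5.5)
The plan is to bound the separated sets of the product by those of the factors, and then use that $\log$ turns a sum into something controlled by a maximum. First I work with the metric $(d_Z)_n$. Since $d_Z$ is the maximum metric, $(d_Z)_n\big((x_1,y_1),(x_2,y_2)\big)=\max\{(d_X)_n(x_1,x_2),(d_Y)_n(y_1,y_2)\}$. Hence a product of a cover of $X$ by sets of $(d_X)_n$-diameter less than $\vep$ with such a cover of $Y$ is a cover of $Z$ by sets of $(d_Z)_n$-diameter less than $\vep$. This gives
$$\mathcal{N}(Z,n,\vep)\,\leqslant\,\mathcal{N}(X,n,\vep)\cdot\mathcal{N}(Y,n,\vep).$$
As observed in Subsection~\ref{sse.mmd}, the dynamical metric order may be computed with $\mathcal{N}$ instead of $S$, by \eqref{eq:srn}. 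So I write $h_\vep(\cdot)$ for $\limsup_n \frac1n\log\mathcal{N}(\cdot,n,\vep)$, and \eqref{eq:sum} yields $h_\vep(f\times g)\leqslant h_\vep(f)+h_\vep(g)$.

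Second, I use the elementary inequality $\log(a+b)\leqslant \log 2+\max\{\log a,\log b\}$ for $a,b>0$. Because $f$ and $g$ have infinite topological entropy, $h_\vep(f)$ and $h_\vep(g)$ tend to $+\infty$ as $\vep\to0^+$, being nondecreasing as $\vep$ decreases. So for small $\vep$ both are at least $1$, and then $\log^+=\log$ on all the quantities involved. This is exactly where the hypothesis is used: it avoids the truncation at $0$ in $\log^+$ interfering with the inequality. If $h_\vep(f)=+\infty$ for some $\vep$, both sides of \eqref{eq:product} are $+\infty$ and nothing needs proving. Otherwise, for all small $\vep$,
$$\frac{\log^+ h_\vep(f\times g)}{|\log\vep|}\,\leqslant\,\frac{\log 2}{|\log\vep|}+\max\Big\{\frac{\log^+ h_\vep(f)}{|\log\vep|},\,\frac{\log^+ h_\vep(g)}{|\log\vep|}\Big\}.$$

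Third, I take $\limsup_{\vep\to0^+}$. The term $\log 2/|\log\vep|$ vanishes, and the $\limsup$ of a maximum equals the maximum of the $\limsup$'s. This gives
$$\mathrm{\overline{mdim}_{mo}}(Z,d_Z,f\times g)\leqslant\max\{\mathrm{\overline{mdim}_{mo}}(X,d_X,f),\,\mathrm{\overline{mdim}_{mo}}(Y,d_Y,g)\},$$
which is even stronger than \eqref{eq:product}, since both terms are nonnegative. The only delicate point is the bookkeeping with infinite values and with $\log^+$ near small entropies, and the infinite-entropy assumption handles exactly that. The remaining steps are routine.
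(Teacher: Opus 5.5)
Your argument is correct, but it differs from the paper's at the one step that matters. The paper also starts from submultiplicativity over the product; it uses spanning numbers, $R(Z,n,\vep)\leqslant R(X,n,\vep)\,R(Y,n,\vep)$, which is the same observation as your product of covers. It then passes the logarithm through the sum via $\log(a+b)\leqslant\log a+\log b$ for $a,b\geqslant 2$. That inequality needs the entropies at scale $\vep$ to exceed $2$, which is why the paper imposes the infinite-entropy hypothesis, and it only yields the sum on the right-hand side.

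Your inequality $\log(a+b)\leqslant\log 2+\max\{\log a,\log b\}$ costs only an additive constant, which disappears after dividing by $|\log\vep|$. It therefore gives the sharper bound by the maximum.

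It gives more than you claim. The inequality $\log^+(a+b)\leqslant \log 2+\max\{\log^+a,\log^+b\}$ holds for all $a,b\geqslant 0$: trivially if $a+b\leqslant 1$, and by your inequality otherwise. So your proof never uses the infinite-entropy hypothesis, contrary to your remark that this is where it enters.

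You also get the reverse inequality. The coordinate projections are $1$-Lipschitz for the Bowen metrics, so $\mathcal{N}(X,n,\vep)\leqslant\mathcal{N}(Z,n,\vep)$, and similarly for $Y$. Hence $\mathrm{\overline{mdim}_{mo}}(Z,d_Z,f\times g)=\max\{\mathrm{\overline{mdim}_{mo}}(X,d_X,f),\mathrm{\overline{mdim}_{mo}}(Y,d_Y,g)\}$ for arbitrary continuous $f$ and $g$.

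A minor point: the case $h_\vep(f)=+\infty$ that you set aside never occurs. Subadditivity of $n\mapsto\log\mathcal{N}(X,n,\vep)$ gives $h_\vep(f)\leqslant\log\mathcal{N}(X,d_X,\vep)<+\infty$.
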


\begin{proof}
Firstly, note that, for any $\varepsilon>0$ and $n \in \N$, 
$$R(Z,n,\varepsilon)\,\leqslant \, R(X,n,\varepsilon)\cdot R(Y,n,\varepsilon)$$
and so, by \eqref{eq:sum},
$$\limsup_{n\, \to\, +\infty}\,\frac{1}{n}\,\log R(Z,n,\varepsilon) \, \leqslant \,\limsup_{n\, \to\, +\infty}\,\frac{1}{n}\,\log R(X,n,\varepsilon) \,+ \,\limsup_{n\, \to\, +\infty}\,\frac{1}{n}\,\log R(Y,n,\varepsilon).$$
If these three values are finite, we can apply the function $\log^+$ to both sides of this inequality, and get
\begin{eqnarray}\label{eq:times}
&& \frac{\log^+ \big(\limsup_{n\, \to\, +\infty}\,\frac{1}{n}\,\log R(Z,n,\varepsilon)\big)}{|\log \vep|}  \nonumber \\
&\leqslant& \frac{\log^+ \big(\limsup_{n\, \to\, +\infty}\,\frac{1}{n}\,\log R(X,n,\varepsilon) \,+ \,\limsup_{n\, \to\, +\infty}\,\frac{1}{n}\,\log R(Y,n,\varepsilon)\big)}{|\log \vep|}.
\end{eqnarray}
\smallskip

\noindent From \eqref{eq:times} we easily conclude that:\\

\noindent $\bullet$ If $\mathrm{h_{top}}(f)$  and $\mathrm{h_{top}}(g)$ are both finite, then there exist $A > 0$ and $\varepsilon_0>0$ such that $h_\varepsilon(f),h_\varepsilon(g)\leqslant A$ for every $0<\varepsilon\leqslant\varepsilon_0$. By \eqref{eq:times}, we obtain
\begin{align*}
\mathrm{\overline{mdim}_{mo}}\,\big(Z, f\times g,d_Z\big)
&  \,=\,\mathrm{\overline{mdim}_{mo}}\,\big(X,d_X,f\big)+\,\mathrm{\overline{mdim}_{mo}}\,\big(Y,d_Y,g\big)=0.
\end{align*}

\noindent $\bullet$ If $0<\mathrm{h_{top}}(f)<\mathrm{h_{top}}(g)$, then 
there exists $\vep_0>0$ such that, for every $0 < \vep < \vep_0$, one has $0<h_\varepsilon(f)<h_\varepsilon(g)$. By \eqref{eq:times}, 
\begin{eqnarray*}
\frac{\log^+ \big(\limsup_{n\, \to\, +\infty}\,\frac{1}{n}\,\log R(Z,n,\varepsilon)\big)}{|\log \vep|} &\leqslant&\frac{\log^+(2h_{\varepsilon}(g))}{|\log \vep|}
\end{eqnarray*}
so
\begin{align*}
\mathrm{\overline{mdim}_{mo}}\,\big(Z, d_Z,f\times g\big)
& \leqslant \, \,\mathrm{\overline{mdim}_{mo}}\,\big(Y,d_Y,g\big)\leqslant\,\mathrm{\overline{mdim}_{mo}}\,\big(X,d_X,f\big)+\,\mathrm{\overline{mdim}_{mo}}\,\big(Y,d_Y,g\big).
\end{align*}

\smallskip

\noindent However, since the logarithm is a concave function, 
$\log \,(a+b)$ is, in general, not smaller than $\log a + \log b$.
Thus, we cannot conclude \eqref{eq:product} from \eqref{eq:times}. 
Nevertheless, we can derive \eqref{eq:product} whenever $f$ and $g$ have infinite topological entropy. 
\smallskip

Assume that
$$\lim_{\vep \, \to \, 0^+}\,\limsup_{n\, \to\, +\infty}\,\frac{1}{n}\,\log S(X,n,\varepsilon) \, = \,+\infty \, = \,  \lim_{\vep \, \to \, 0^+}\,\limsup_{n\, \to\, +\infty}\,\frac{1}{n}\,\log S(Y,n,\varepsilon).$$
Then we can find  $\varepsilon_0>0$ such that, for every $0<\varepsilon<\varepsilon_0$,  
\begin{equation}\label{eq:22}
\limsup_{n\, \to\, +\infty}\,\frac{1}{n}\,\log S(X,n,\varepsilon) \, > \, 2 \quad \quad \text{ and } \quad \quad \limsup_{n\, \to\, +\infty}\,\frac{1}{n}\,\log S(Y,n,\varepsilon) \,>\,2.    
\end{equation}
\smallskip

\noindent \textbf{Claim}: \emph{If $a \geqslant 2$ and $b \geqslant 2$, then $\,\log (a+b) \,\leqslant \,\log (a\,b) = \log a + \log b.$}
\smallskip

\noindent Indeed, 
$\,a \geqslant 2,\,\, b \geqslant 2\,  \Rightarrow \, (a - 1)(b - 1) \geqslant 1 \,  \Leftrightarrow \, ab - a - b + 1 \geqslant 1 \, \Leftrightarrow \, ab \geqslant  a + b.$\\

\medskip

Summoning \eqref{eq:times}, \eqref{eq:22} and the previous claim, we get
\begin{eqnarray*}
&& \frac{\log^+ \big(\limsup_{n\, \to\, +\infty}\,\frac{1}{n}\,\log R(Z,n,\varepsilon)\big)}{|\log \vep|}  \nonumber \\
&\leqslant& \frac{\log^+ \big(\limsup_{n\, \to\, +\infty}\,\frac{1}{n}\,\log R(X,n,\varepsilon) \,+ \,\limsup_{n\, \to\, +\infty}\,\frac{1}{n}\,\log R(Y,n,\varepsilon)\big)}{|\log \vep|}\\
&\leqslant& \frac{\log^+ \big(\limsup_{n\, \to\, +\infty}\,\frac{1}{n}\,\log R(X,n,\varepsilon)\big)}{|\log \vep|} \,+ \,\frac{\log^+ \big(\limsup_{n\, \to\, +\infty}\,\frac{1}{n}\,\log R(Y,n,\varepsilon)\big)}{|\log \vep|}.
\end{eqnarray*}
Therefore, applying \eqref{eq:sum} we conclude that
$$
\mathrm{\overline{mdim}_{mo}}\,\big(Z, d_Z,f\times g\big) 
\, \leqslant \, 
\mathrm{\overline{mdim}_{mo}}\,\big(X,d_X,f\big)+\mathrm{\overline{mdim}_{mo}}\,\big(Y,d_Y,g\big).$$
\end{proof}

\subsection{Restriction to the non-wandering set}

Given a continuous map $f\colon X\to X$ on a compact metric space $(X,d)$, a point $x \in X$ is said to be non-wandering for $f$ if for every open neighborhood $U$ of $x$ there exists $n \in \N$ such that $f^n(U) \cap U \neq \emptyset$. It is well known that the topological entropy (respectively, the metric mean dimension) of $f$ is equal to the topological entropy (respectively, the metric mean dimension) of the restriction of $f$ to the non-wandering set of $f$ (denoted by $\Omega(f)$).

\begin{proposition}
Let $f\colon X\to X$ be a continuous map acting on a compact metric space $(X,d)$. Then
\[\mathrm{\overline{mdim}_{mo}}\,\big(X,d,f\big) \,=\, \mathrm{\overline{mdim}_{mo}}\,\big(\Omega(f), d,f|_{\Omega(f)}\big).\]
\end{proposition}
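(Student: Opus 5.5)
The plan is to prove the two inequalities separately. The key input is the classical result, used for entropy and recalled just before the statement, that entropy at each fixed scale sees only the non-wandering set up to a change of scale.

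\textbf{The easy inequality.} Since $\Omega(f)$ is compact and $f$-invariant, any $(n,\vep)$-separated subset of $\Omega(f)$ for $f|_{\Omega(f)}$ is also $(n,\vep)$-separated in $X$. Hence
$$S(\Omega(f),n,\vep)\leqslant S(X,n,\vep) \quad \text{for all } n,\vep.$$
Because $\log^+$ is monotone, taking the $\limsup$ in $n$, then $\log^+$, dividing by $|\log\vep|$ and taking the $\limsup$ in $\vep$ gives
$$\mathrm{\overline{mdim}_{mo}}\big(\Omega(f),d,f|_{\Omega(f)}\big)\leqslant \mathrm{\overline{mdim}_{mo}}\big(X,d,f\big).$$

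\textbf{The reverse inequality.} I would use the scale-wise estimate behind the entropy and metric mean dimension statements quoted above: for every $\vep>0$,
$$\limsup_{n\to+\infty}\frac1n\log \mathcal N(X,n,\vep)\leqslant \limsup_{n\to+\infty}\frac1n\log \mathcal N\big(\Omega(f),n,\vep/2\big),$$
possibly with a fixed constant factor in place of $1/2$. This is the form of Bowen's argument that underlies the metric mean dimension version. To prove it, cover $\Omega(f)$ by finitely many open sets of small diameter and let $U$ be their union. The complement $K=X\setminus U$ is compact and consists of wandering points. Each point of $K$ has a neighborhood that its orbit visits at most once, so there is a finite cover of $K$ by sets $V_1,\dots,V_m$, each visited at most once along any orbit. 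For $x\in X$, the itinerary of $x$ over $n$ steps then spends at most $m$ times in $K$. Counting itineraries gives at most
$$\binom{n}{m}(m+1)^m\,\mathcal N\big(\Omega(f),n,\vep/2\big)^{1+o(1)}$$
elements, after handling the blocks of times spent in $U$ via sub-additivity \eqref{eq:cov}. The factors in front grow only polynomially in $n$, so they vanish in the exponential rate. I expect this to be the main obstacle: the pieces of orbit spent in $U$ must be shadowed by $d$-close points of $\Omega(f)$ at a scale comparable to $\vep$. I would handle it as in the standard proof (e.g. Walters' or the metric mean dimension references), by choosing the cover of $\Omega(f)$ with diameter $\vep/2$ and accepting the loss of a factor of $2$ in scale.

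\textbf{Passing to the dynamical metric order.} Granting the inequality above with scale $\vep/2$, apply $\log^+$ and divide by $|\log\vep|$. Since
$$\frac{|\log(\vep/2)|}{|\log\vep|}\to1 \quad \text{as } \vep\to0^+,$$
the change of scale costs nothing in the limit. Using that the dynamical metric order may be computed with $\mathcal N$ (Subsection~\ref{sse.mmd}), this gives
$$\mathrm{\overline{mdim}_{mo}}\big(X,d,f\big)\leqslant \mathrm{\overline{mdim}_{mo}}\big(\Omega(f),d,f|_{\Omega(f)}\big),$$
and together with the easy inequality this yields the equality.

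If either side has infinite entropy at some scale, the same inequality at that scale transfers $+\infty$ from one side to the other. The convention $\log^+(+\infty)=+\infty$ then keeps both sides equal.
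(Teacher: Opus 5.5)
Your proposal is correct and is essentially the paper's proof. The easy inequality comes from $S(\Omega(f),n,\vep)\leqslant S(X,n,\vep)$, and the reverse one from Bowen's scale-wise bound $h_{2\vep}(f)\leqslant h_{\vep}(f|_{\Omega(f)})$, with the change of scale absorbed because $|\log(\vep/2)|/|\log\vep|\to 1$.

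The paper simply cites Bowen's bound, so your sketch of it is optional. If you want to make it rigorous, choose the neighbourhood $U$ of $\Omega(f)$ only after fixing a block length $N$, so that every length-$N$ orbit segment starting in $U$ is $d_N$-shadowed at scale $\vep/2$ by one starting in $\Omega(f)$. A $d$-small cover of $\Omega(f)$ alone does not control long stretches of the orbit spent in $U$.
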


\begin{proof}
By definition, one has 
$$\mathrm{\overline{mdim}_{mo}}\,\big(X,d,f\big) \,\geqslant\, \mathrm{\overline{mdim}_{mo}}\,\big(\Omega(f),d,f|_{\Omega(f)}\big).$$
The converse inequality
\begin{align*}\label{ineq:vital}
    \mathrm{\overline{mdim}_{mo}}\,\big(X,d,f\big) \,\leqslant\, \mathrm{\overline{mdim}_{mo}}\,\big(\Omega(f),d,f|_{\Omega(f)}\big).
\end{align*}
is an immediate consequence of the fact that, for every $\varepsilon>0$, one has (cf. \cite{Bowen1})
\[h_{2\varepsilon}(f)\,\leqslant \,h_{\varepsilon}(f|_{\Omega(f)})\]
where $h_\vep$ is the topological entropy at scale $\vep$ defined in \eqref{eq:hvep}.
\end{proof}

\subsection{Bi-Lipschitz invariance}
The dynamical metric order is invariant under a bi-Lipschitz conjugation. The proof of this property is entirely analogous to the one done for the metric mean dimension since a bi-Lipschitz homeomorphism changes distances and scales by a uniform constant.

\section{Proof of Theorem~\ref{thm1}}\label{se:proofA}

Let $(X,d)$ be a compact metric space whose diameter (which we denote by $\mathrm{diam}(X)$) is positive.  
\medskip

\noindent $(a)$ Given $\varepsilon>0$, take a maximal $2\varepsilon$-separated set $\{x_1,\dots,x_{m}\}$ in $X$; hence $m = S(X,d,2\varepsilon)$. Let $n \in \N$ and fix $x_0 \in X$. Any two distinct points in the set
\[A=\Big\{(y_k)_{k\,\in\,\mathbb{N}}\in X^\mathbb{N} \colon \,y_k \in \{x_1,\dots,x_m\} \,\,\text{ for all $1 \leqslant k < n$, and $y_k = x_0$ otherwise}\Big\}\]
are $\varepsilon$-separated in the metric $(\rho_X)_n$. Consequently,
\[S(X^\N,n,\varepsilon)\,\geqslant \, S(X,d,2\varepsilon)^n\]
and so
\[\limsup_{n\,\to\,+\infty}\,\frac{1}{n}\,\log S(X^\N,n,\varepsilon)\,\geqslant\, \log S(X,d,2\varepsilon).\]
Thus,
$$\mathrm{\overline{mdim}_{mo}}\,\big(X^\mathbb{N}, \rho_{_X}, \sigma\big)\,\geqslant\, \mathrm{\overline{mo}}(X,d).$$

\medskip

Regarding the opposite inequality, given $\varepsilon > 0$, take $N=N(\varepsilon) \in \N$ such that 
$$\sum_{k\,>\,N} \frac{\mathrm{diam}(X)}{2^{k}}<\varepsilon/2.$$
Fix $x_0 \in X$ and let $\{x_1,\dots,x_\ell\}$ be an $\varepsilon$-spanning subset of $X$, with minimal cardinality $\ell = R(X, d,\vep)$, and
\[B \,=\,\Big\{(y_k)_{k\,\in\,\mathbb{N}}\in X^\mathbb{N} \colon y_k\in\{x_1,\dots,x_\ell\} \,\text{for all  $1\leqslant k < n+N(\varepsilon)$ and $y_k=x_0$ otherwise} \Big\}.\]
Then $B$ is an $(n,\varepsilon)$-spanning subset of  $X^\mathbb{N}$ with cardinality  
$R(X, d,\vep)^{n+N(\varepsilon)}$. Thus,
$$\limsup_{n\,\to\,+\infty}\,\frac{1}{n}\,\log R(X^\N,n,\varepsilon) \,\leqslant \, \log R(X, d,\vep)$$
and so
$$\mathrm{\overline{mdim}_{mo}}\,\big(X^\mathbb{N}, \rho_{_X}, \sigma\big)\,\leqslant \, \mathrm{\overline{mo}}(X,d).$$
\smallskip

\noindent $(b)$ Given a continuous transformation $f\colon X \to X$,  consider the map 
\begin{align*}
    h\colon X &\,\, \to \,\,X^{\mathbb N}\\
    x &\,\, \mapsto \,\,\big(f^{j-1}(x)\big)_{j\,\in\, \mathbb{N}}.
\end{align*}
It is straightforward to verify that $h$ is continuous, one-to-one over its image and satisfies $h\circ f = \sigma\circ h$. As $X$ is compact, $h$ is a topological conjugacy between $(X,f)$ and $(h(X),\sigma)$.  We claim that $$\mathrm{\overline{mdim}_{mo}}\,\big(h(X),\rho_{_X}, \sigma
\big)\,\geqslant \,\mathrm{\overline{mdim}_{mo}}\,\big(X,d,f\big).$$
Indeed, given an $(n,\varepsilon)$-separated set $E \subset X$ in the metric $d$, then $h(E)$ is an $(n,\varepsilon/2)$-separated set in $(X^\mathbb{N}, \rho_{_X})$. This yields the claimed inequality and completes the proof of Theorem~\ref{thm1}.

\section{Proof of Corollary~\ref{thm4}}\label{se:proofD}

Given $n\in\mathbb{N}$ and $\vep>0$, by Lemma~\ref{lemma0} and Lemma~\ref{lemma01} one has 
\[Q_{\mu,D_n}(\vep) \,\leqslant \, \mathcal{N}(X,n,\vep) \quad \forall\, \mu \in \cP_f(X).\]
Therefore,
\begin{equation}\label{eq:leqsup}
\mathrm{\overline{mdim}_{mo}}\,\big(X,d,f\big) \, \geqslant \, \sup_{\mu\, \in\, \cP_f(X)}\,
\mathrm{\overline{mdim}_{mo}}\,\big(X,d,f,D,\mu\big).
\end{equation}

For the converse inequality, we can adapt the proofs of \cite[Theorem~A and B]{CP2026} and \cite[Theorem~1.3]{CYZ} by just applying an extra $\log$ before dividing by $|\log \varepsilon|$ and taking the final limit with the scale $\varepsilon \to 0^+$. This reasoning yields, when $f$ is a homeomorphism, the existence of an $f$-invariant Borel probability measure $\mu_0$ such that 
\begin{equation}\label{eq:geq}
\mathrm{\overline{mdim}_{mo}}\,\big(X,d,f\big) \, \leqslant \, 
\mathrm{\overline{mdim}_{mo}}\,\big(X,d,f,D,\mu_0\big).
\end{equation}
The proof of Corollary~\ref{thm4} is complete by bringing together \eqref{eq:leqsup} and \eqref{eq:geq}.

\section{Proof of Corollary~\ref{thm2}}\label{se:proofC}

Let $(X,d)$ be a compact metric space with $\mathrm{\overline{dim}}_B(X,d) \, < \, +\infty$ and $f\colon X \to X$ be a continuous map. The upper bound for the dynamical metric order of the induced map $f_*$ stated in Corollary~\ref{thm2} was already obtained in Corollary~\ref{cor:push}, which yields 
\begin{equation}\label{eq:finite}
\mathrm{\overline{mdim}_{mo}}\,\big(\mathcal{P}(X),W_1,f_\ast\big) \,\leqslant\, \mathrm{\overline{dim}}_B(X,d) \, < \, +\infty.
\end{equation}

The lower bound for the dynamical metric order of the induced map announced in Corollary~\ref{thm2} is a direct consequence of \cite[Lemma 17]{BurguetShi}, which states that, if $(X,d)$ is a compact metric space with finite upper box-counting dimension and $f\colon X \to X$ is a continuous $\lambda$-Lipschitz map with positive entropy (hence, $h_{top}(f_\ast)) = +\infty$ and we may replace $\log^+$ by $\log$), then there exists $\alpha > 0$  such that
$$\forall \,0 < \vep < 1 \quad \quad  \frac{\log\big(\limsup_{n \, \to \, +\infty}\,\frac{1}{n}\,\log S(\mathcal P(X),n,\vep)\big)}{|\log \vep|} \, \geqslant \, \alpha$$
where $\mathcal P(X)$ is endowed with the metric $W_1$. More precisely, let
$$\mathcal{L} = 2 \log \lambda + \frac{\mathcal{K}\, \log 2}{2}$$
where 
$\mathcal{K}$ is a positive constant, depending on $X$ and $f$, provided by \cite[Proposition~3.9 (2))]{KerrLi}. 

\begin{lemma}\cite[Lemma 17]{BurguetShi}\label{le:lowerbound}
Consider the space $\mathcal P(X)$ endowed with the metric $W_1$. For any $0 < \gamma  < \frac{\mathcal{K}\, \log 2}{2\, \mathcal{L}}$, there exists a constant $C > 0$ such that
$$\forall \, 0 < \vep < 1 \quad \quad \frac{\log\big(\limsup_{n \, \to \, +\infty}\,\frac{1}{n}\,\log S(\mathcal P(X),n,\vep)\big)}{|\log \vep|} \,\geqslant\, C \, \vep^{-\gamma}.$$
\end{lemma}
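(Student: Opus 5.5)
The plan is to produce, for each scale $\vep$, a large family of measures whose $f_\ast$-orbits stay $\vep$-apart in $W_1$, with a count coming from combinatorial independence of $f$. Since $\mathrm{h_{top}}(f)>0$, \cite[Proposition~3.9(2)]{KerrLi} gives a pair of disjoint closed sets $U_0,U_1\subset X$ at distance $\eta>0$, and an independence set $J\subset\N$ of positive density $\mathcal K$ for $(U_0,U_1)$. Concretely, for every $N$ and every word $w\in\{0,1\}^{J\cap[0,N)}$ there is a point $x_w$ with $f^j(x_w)\in U_{w_j}$ for $j\in J\cap[0,N)$.

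\textbf{Step 1 (measures).} For a scale $\vep$, set $k\approx\vep^{-\gamma'}$, with $\gamma'$ to be fixed below. For each choice of $k$ independence words, form the average $\mu=\frac1k\sum_{i}\delta_{x_{w_i}}$. Two such measures $\mu,\mu'$ are $(n,\vep)$-separated for $f_\ast$ if, at some time $j\in J$, the proportions of atoms lying in $U_0$ under $f_\ast^j\mu$ and $f_\ast^j\mu'$ differ by at least $\vep/\eta$. This is because $W_1\geqslant \eta\,|\mu(U_0)-\mu'(U_0)|$, by duality with the $1/\eta$-Lipschitz function $\max\{0,1-d(\cdot,U_0)/\eta\}$.

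\textbf{Step 2 (counting).} Encode the atom proportions along $J\cap[0,n)$ as a sequence in $\{0,1/k,\dots,1\}$. Keep only sequences whose consecutive changes are bounded by $\lambda$-Lipschitz constraints; this is where $\mathcal L=2\log\lambda+\mathcal K\log 2/2$ enters, as a bound on how many words survive the transport cost. A volume (Gilbert--Varshamov) argument then yields a separated set of cardinality $\exp\big(n\,\mathcal K\,c\,k\log k\big)$, provided $1/k\gtrsim\vep$. This gives
\[\limsup_{n\to+\infty}\tfrac1n\log S(\mathcal P(X),n,\vep)\;\geqslant\; C\,\vep^{-\gamma}\]
for every $\gamma<\mathcal K\log2/(2\mathcal L)$.

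\textbf{Step 3 (obstacle).} Step~2 is the content of \cite[Lemma 17]{BurguetShi} in its natural form: the \emph{entropy at scale $\vep$} is at least $C\vep^{-\gamma}$. The displayed statement asks for more. Taking $\log$ and dividing by $|\log\vep|$, it would require the entropy at scale $\vep$ to be at least $\exp(C\vep^{-\gamma}|\log\vep|)$.

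I do not see a way to get this stronger bound, and in fact it cannot hold as $\vep\to0^+$. Corollary~\ref{cor:push} bounds the left-hand ratio by $\mathrm{\overline{dim}_B}(X,d)+o(1)<+\infty$, while $C\vep^{-\gamma}\to+\infty$.

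So the step I expect to fail is exactly the upgrade from Step~2 to the displayed inequality. My plan is to prove the Step~2 bound and read the lemma in that form. Taking $\log$ of that bound gives a quotient at least $\gamma+\frac{\log C}{|\log\vep|}$, whose $\limsup$ is at least $\gamma>0$. This is all Corollary~\ref{thm2} needs.
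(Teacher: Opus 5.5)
Your Step~3 is right, and it matches how the paper actually uses the lemma. As displayed, the inequality would force $\mathrm{\overline{mdim}_{mo}}\big(\cP(X),W_1,f_\ast\big)=+\infty$, contradicting \eqref{eq:finite}. The paper's own passage to \eqref{eq:positive}, which gives the finite bound $\mathcal K\log 2/(2\mathcal L)$, only makes sense for the reading $\limsup_n\frac1n\log S(\cP(X),n,\vep)\geqslant C\vep^{-\gamma}$. The paper gives no argument beyond quoting \cite[Lemma 17]{BurguetShi} in that form. So your fallback (cite that bound, take $\log$, divide by $|\log\vep|$, let $\gamma\uparrow\mathcal K\log2/(2\mathcal L)$) coincides with what the paper does.

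The gap is in Steps~1--2, taken as an independent derivation of that bound. Under the criterion of Step~1, measures are told apart only by their $U_0$-proportions at times $j\in J\cap[0,n)$. At each such time only $O(1/\vep)$ proportion levels are distinguishable at scale $\vep$. Hence any family certified as separated this way has at most $(C'/\vep)^{n}$ elements, giving at best $\limsup_n\frac1n\log S(\cP(X),n,\vep)=O(|\log\vep|)$. Then $\log(\cdot)/|\log\vep|\to 0$, which is useless for Corollary~\ref{thm2}.

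The count $\exp(n\mathcal K c\,k\log k)$ in Step~2 exceeds the $(k+1)^{|J\cap[0,n)|}$ available proportion sequences. Since you only impose $1/k\gtrsim\vep$, it would give exponents arbitrarily close to $1$ for every Lipschitz map with positive entropy. That contradicts Corollary~\ref{cor:push} for the shift on $\{0,1\}^\N$ with the metric $e^{-k}$, where $\mathrm{\overline{dim}_B}=\log 2<1$. Moreover, the Lipschitz hypothesis does no work in your sketch, since discarding sequences can only lower a count. Yet the non-Lipschitz example from \cite[Remark~18]{BurguetShi} in Section~\ref{se:examp}, to which your Step~1 applies verbatim, shows the bound fails without it.

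The missing idea is to use $\lambda$ to convert dynamical separation into spatial separation. If the $U_0/U_1$-itineraries of $x$ and $y$ differ at some time in $[0,i]$, then $d(x,y)\geqslant\eta\lambda^{-i}$. Fix one point $x_w$ per word $w$. At scale $\vep\approx\eta\lambda^{-i}$, the measure $f_\ast^j\mu$ then reveals the whole distribution of the next $\approx\mathcal K i$ independent symbols of its atoms. Indeed, a $1$-Lipschitz function separating the clusters gives $W_1\geqslant\eta\lambda^{-i}$ times the total variation distance of these block distributions. Now choose the block distributions independently, on consecutive blocks of length $i$, from a Gilbert--Varshamov family of size $\exp(c\,2^{\mathcal K i})$. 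This gives entropy at scale $\vep$ of order $2^{\mathcal K i}/i$, a power of $1/\vep$, which is the kind of bound \cite[Lemma 17]{BurguetShi} provides.
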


In particular, from Lemma~\ref{le:lowerbound} we conclude that, if $(X,d)$ is a compact metric space with finite upper box-counting dimension and $f\colon X \to X$ is a continuous $\lambda$-Lipschitz map with positive entropy, then 
$$\limsup_{\vep \, \to \, 0^+}\, \frac{\log\big(\limsup_{n \, \to \, +\infty}\,\frac{1}{n}\,\log S(\mathcal P(X),n,\vep)\big)}{|\log \vep|} \,\geqslant\, \frac{\mathcal{K}\, \log 2}{2\, \mathcal{L}}$$
that is,
\begin{equation}\label{eq:positive}
\mathrm{\overline{mdim}_{mo}}\,\big(\mathcal{P}(X),W_1,f_\ast\big) \,\geqslant\, \frac{\mathcal{K}\, \log 2}{2\, \mathcal{L}} \, > \, 0.
\end{equation}
Inequalities \eqref{eq:finite} and \eqref{eq:positive} together complete the proof of Corollary~\ref{thm2}.

\section{Proof of Theorem~\ref{thm3}}\label{se:proofCC}

We start by showing that 
$$\limsup_{n \, \to \, +\infty}\,
\frac{\mathrm{\overline{mo}}\,\big(\mathcal{P}(X), W_{1,n}\big)}{n} \,=\, \limsup_{n \, \to \, +\infty}\,
\frac{\mathrm{\overline{dim}_B}\,\big(X,d_n\big)}{n}.$$
Recall that, given $n \in \N$ and $\vep >0$, we denote by $\mathcal{N}(\mathcal{P}(X), W_{1,n}, \vep)$ the smallest number of open balls of radius $\vep$ in the distance $W_{1,n}$ needed to cover $\mathcal{P}(X)$. By \cite[Theorem~A.1]{BGV}, for every $\vep>0$ and $n \in \N$ one has
$$\mathcal{N}(\mathcal{P}(X), W_{1,n}, \vep) \, \leqslant \, \left(\frac{C}{\vep}\right)^{\mathcal{N}(X,\, d_n,\,\vep/2)}$$
where $C>0$ is a multiple of the diameter of $(X,d).$ Therefore,
$$\frac{\log \log \mathcal{N}(\mathcal{P}(X),  W_{1,n},\vep)}{n\,|\log \vep|} \, \leqslant \, \frac{\log \mathcal{N}(X,  d_n,\vep/2)}{n\,|\log \vep|} \,+\, \frac{\log \log \big(\frac{C}{\vep}\big)}{n\,|\log \vep|}.$$
So,
$$\limsup_{\vep \, \to \, 0^+}\,\frac{\log \log \mathcal{N}(\mathcal{P}(X), W_{1,n}, \vep)}{n\,|\log \vep|} \, \leqslant \, \frac{\mathrm{\overline{dim}}_B(X,d_n)}{n}.$$
Consequently,
$$\limsup_{n \, \to \, +\infty}\,\limsup_{\vep \, \to \, 0^+}\,\frac{\log \log \mathcal{N}(\mathcal{P}(X),  W_{1,n},\vep)}{n\,|\log \vep|} \, \leqslant \, \limsup_{n \, \to \,+\infty}\,\frac{\mathrm{\overline{dim}}_B(X,d_n)}{n}$$
that is, 
\begin{equation}\label{eq:1}
\limsup_{n \, \to \, +\infty}\,
\frac{\mathrm{\overline{mo}}\,\big(\mathcal{P}(X), W_{1,n}\big)}{n} \,\leqslant\, \limsup_{n \, \to \, +\infty}\,
\frac{\mathrm{\overline{dim}_B}\,\big(X,d_n\big)}{n}.
\end{equation}
\smallskip

We now address the converse inequality. Following \cite{BB}, given $n \in \N$ and $\vep > 0$, one says that two Borel probability measures on $(X,d)$ are $(n,\vep)$-apart if their supports distance at least $\vep$ in the Hausdorff distance determined by the metric $d_n$. More precisely, $\mu, \nu \in \mathcal{P}(X)$ are $(n,\vep)$-apart if and only if 
$$\min\big\{d_n(x,y)\colon \,x \in \mathrm{supp}(\mu),\; y \in \mathrm{supp}(\nu)\big\} \,\geqslant \, \vep.$$
Denote by $A(\mathcal P(X), d_n, \vep)$ the maximal number of pairwise $(n,\vep)$-apart Borel probability measures in $\mathcal P(X)$. Observe that, if $\{x_1, \ldots, x_k\} \subset X$ is $(n, \vep)$-separated, then the set of Dirac measures on those points, that is, $\{\delta_{x_1}, \cdots, \delta_{x_k}\} \subset \mathcal{P}(X)$, is made of pairwise $(n, \vep)$-apart Borel probability measures. Thus,
\begin{equation}\label{eq:sA}
S(X, d_n,\vep) \,\leqslant \, A(\mathcal P(X), d_n,\vep).
\end{equation}

On the other hand, from the proof of \cite[Theorem 1.6]{BB} we conclude that, for every $\vep>0$ and $n \in \N$, one has
$$\frac{\log \log S(\mathcal{P}(X), W_{1,n},\vep/4)}{n} \, \geqslant \, \frac{\log c + \log \big(A(\mathcal{P}(X), d_n, \vep)\big)}{n}$$
where $c>0$ is a constant. So,
$$\limsup_{\vep\, \to \, 0^+}\,\frac{\log \log S(\mathcal{P}(X), W_{1,n},\vep/4)}{n \,|\log \vep|} \, \geqslant \, \limsup_{\vep\, \to \, 0^+}\,\frac{\log \big(A(\mathcal{P}(X), d_n, \vep)\big)}{n\, |\log \vep|}.$$
\smallskip

\noindent Therefore, by \eqref{eq:sA},
\begin{eqnarray*}
\limsup_{n \, \to \, +\infty}\,\limsup_{\vep\, \to \, 0^+}\,\frac{\log \log S(\mathcal{P}(X), W_{1,n},\vep/4)}{n\, |\log \vep|} &\geqslant& \limsup_{n \, \to \, +\infty}\,\limsup_{\vep\, \to \, 0^+}\,\frac{\log \big(A(\mathcal{P}(X), d_n, \vep)\big)}{n \,|\log \vep|} \\
&\geqslant& \limsup_{n \, \to \, +\infty}\,\limsup_{\vep\, \to \, 0^+}\,\frac{\log S(X, d_n,\vep)}{n \,|\log \vep|}.    
\end{eqnarray*}
Consequently,
\begin{equation}\label{eq:2}
\limsup_{n \, \to \, +\infty}\,
\frac{\mathrm{\overline{mo}}\,\big(\mathcal{P}(X), W_{1,n}\big)}{n} \,\geqslant\, \limsup_{n \, \to \, +\infty}\,
\frac{\mathrm{\overline{dim}_B}\,\big(X,d_n\big)}{n}.    
\end{equation}
\smallskip

\noindent Bringing together \eqref{eq:1} and \eqref{eq:2}, we obtain
$$ \limsup_{n \, \to \, +\infty}\,
\frac{\mathrm{\overline{mo}}\,\big(\mathcal{P}(X), W_{1,n}\big)}{n} \,=\, \limsup_{n \, \to \, +\infty}\,
\frac{\mathrm{\overline{dim}_B}\,\big(X,d_n\big)}{n}.$$
\smallskip

We proceed by proving that 
\begin{equation}\label{eq:mbd}
\limsup_{n \, \to \, +\infty}\,
\frac{\mathrm{\overline{dim}_B}\,\big(X,d_n\big)}{n} \, \geqslant \, \mathrm{\overline{mdim}_M}\,\big(X,d,f\big).
\end{equation}
We may assume that  $\mathrm{\overline{dim}_B}\,\big(X,d_n\big) < +\infty$ for every $n \in \N$, otherwise \eqref{eq:mbd} is immediate due to \eqref{eq:dn}. As mentioned previously (see Subsections~\ref{sse:mdim} and \ref{sse:boxdim}), we can use $\mathcal{N}(X,n,\vep)$ instead of $S(X,n,\vep)$ to express these quantities. 

Denote by $\gamma$ the limit
$$\gamma \,=\, \limsup_{n \, \to \, +\infty}\,
\frac{\mathrm{\overline{dim}_B}\,\big(X,d_n\big)}{n} = 
 \limsup_{n \, \to \, +\infty}\, \frac{1}{n}\,
\left(\limsup_{\vep \, \to \, 0^+}\,\frac{\log  \mathcal{N}(X,n,\vep)}{|\log \vep|}\right).$$
Fix $\delta>0$ and take $N=N(\delta)\in\mathbb N$ so that 
$$
\frac{1}{N}\,\limsup_{\vep \, \to \, 0^+}\,\frac{\log  \mathcal{N}(X,N,\vep)}{|\log \vep|}\,<\, \gamma+\delta.$$
Now choose $\vep_0 = \vep_0(N) > 0$ such that, for every $0 < \vep < \vep_0$, one has
$$\frac{\log  \mathcal{N}(X,N,\vep)}{|\log \vep|} \, < \, N(\gamma+\delta).$$
Hence,
$$\log \mathcal{N}(X,N,\vep) \, < \, N(\gamma+\delta)\,|\log \vep|.$$
Fix $\vep$ such that $0 < \vep < \vep_0$. Then, since the sequence $\big(\log \mathcal{N}(X,n,\vep)\big)_{n\, \in\, \N}$ is subadditive, for all $k \in \N$ one has
$$\frac{\log \mathcal{N}(X,kN,\vep)}{kN} \, \leqslant \, \frac{k\,\log \mathcal{N}(X,N,\vep)}{kN} \,=\, \frac{\log \mathcal{N}(X,N,\vep)}{N} \, < \, (\gamma+\delta)\,|\log \vep|$$
and 
$$\limsup_{k \, \to \, +\infty}\,\frac{\log \mathcal{N}(X,kN,\vep)}{kN} \, = \, \lim_{m \, \to \, +\infty}\,\frac{\log \mathcal{N}(X,m,\vep)}{m}$$
since the latter limit exists. Therefore,
$$\lim_{m \, \to \, +\infty}\,\frac{\log \mathcal{N}(X,m,\vep)}{m} \, \leqslant \, (\gamma + \delta)\,|\log \vep|.$$
Thus,
$$\frac{\lim_{m \, \to \, +\infty}\,\frac{\log \mathcal{N}(X,\,m,\,\vep)}{m}}{|\log \vep|} \, \leqslant \, \gamma+\delta$$
and, as $\vep$ is arbitrary in $]0, \vep_0[$, 
$$ \limsup_{\vep \, \to \, 0^+}\,\frac{\lim_{m \, \to \, +\infty}\,\frac{\log \mathcal{N}(X,\,m,\,\vep)}{m}}{|\log \vep|} \, \leqslant \, \gamma+\delta.$$
Consequently,
$$\mathrm{\overline{mdim}_M}\,\big(X,d,f\big) \,= \, \limsup_{\vep \, \to \, 0^+}\,\frac{\lim_{m \, \to \, +\infty}\,\frac{\log \mathcal{N}(X,\,m,\,\vep)}{m}}{|\log \vep|} \, \leqslant\,  \gamma+\delta.$$
As $\delta>0$ is arbitrary, we conclude that $\mathrm{\overline{mdim}_M}\,\big(X,d,f\big) \,\leqslant \, \gamma.$ The proof of Theorem~\ref{thm3} is finished.

\subsection*{Acknowledgments} Maria Carvalho was partially supported by CMUP, member of LASI, which is financed by national funds through FCT – Fundação para a Ciência e a Tecnologia, I.P., under the project UID/00144/2025,  https://doi.org/10.54499/UID/00144/2025. Fagner B. Rodrigues  is grateful to CMUP and Faculdade de Ciências da Universidade do Porto for the excellent research conditions and hospitality.



\begin{thebibliography}{99}

\bibitem{AKM}
R. Adler, A. Konheim and M. McAndrew.
\newblock \emph{Topological entropy.}
\newblock Trans. Amer. Math. Soc. 114 (1965), 309--319



\bibitem{BCP}
A. Baraviera, M. Carvalho and G. Pessil. 
\newblock \emph{Metric mean dimension, H\"older regularity and Assouad dimension.} 
\newblock J. Fractal Geom. 2025. https://doi.org/10.4171/JFG/169

\bibitem{BSigmund}
W. Bauer and K. Sigmund.
\newblock \emph{Topological dynamics of transformations induced on the space of probability measures}.
\newblock Monatsh. Math. 79 (1975) 81--92.

\bibitem{Berger}
P. Berger.
\newblock \emph{Emergence and non-typicality of the finiteness of the attractors in many topologies}.
\newblock Proc. Steklov Inst. Math. 297:1 (2017) 1--27.

\bibitem{BB}
P. Berger and J. Bochi.
\newblock \emph{On emergence and complexity of ergodic decompositions}.
\newblock Adv. Math. 390 (2021), Paper No. 107904, 52 pp.

\bibitem{Bil}
P. Billingsley.
\newblock  Convergence of Probability Measures.
\newblock \emph{Wiley Series in Probability and Statistics}, John Wiley and Sons, New York, 2nd edition, 1999.


\bibitem{BGV}
F. Bolley, A. Guillin and C. Villani.
\newblock \emph{Quantitative concentration inequalities for empirical measures on non-compact spaces.}
\newblock Probab. Theory Related Fields 137 (2007) 541--593.


\bibitem{Bowen1}
R. Bowen.
\newblock\emph{Topological entropy and Axiom A.}
\newblock Proceedings of Symposia in Pure Mathematics, Global Analysis XIV, 1970.

\bibitem{BrinS}
M. Brin and G. Stuck.
\newblock \emph{Introduction to Dynamical Systems.}
\newblock Cambridge University Press, 2002.


\bibitem{BurguetShi}
D. Burguet and R. Shi.
\newblock \emph{Topological mean dimension of induced systems.}
\newblock Trans. Amer. Math. Soc. 378 (2025) 3085--3103.


\bibitem{CP2026}
M. Carvalho and G. Pessil.
\newblock \emph{On quantization and the classical variational principle for the metric mean dimension}.
\newblock Preprint arXiv:2603.17091v3, 2026.


\bibitem{CRV3}
M. Carvalho, F. B. Rodrigues and P. Varandas,
\newblock \emph{Generic homeomorphisms have full metric mean dimension}.
\newblock Ergodic Theory Dynam. Systems 142 (2020) 1--25.

\bibitem{CRV4}
M. Carvalho, F. B. Rodrigues and P. Varandas,
\newblock \emph{Topological and metric emergence of continuous maps.}
\newblock Math. Proc. Cambridge Philos. Soc. 177:3 (2024) 525–551.

\bibitem{CPV}
M. Carvalho, G. Pessil and P. Varandas. 
\newblock \emph{A convex analysis approach to the metric mean dimension: limits of scaled pressures and variational principles.}
\newblock Adv. Math. 436 (2024)  Paper No. 109407, 54 pp.

\bibitem{CYZ}
E. Chen, R. Yang and X. Zhou.
\newblock \textit{Measure-theoretic metric mean dimension.}
\newblock Studia Math. 280(1) (2025), 1–25.









\bibitem{Feng-Zhiying}
D. Feng, Z. Wen and J. Wu.
\newblock \emph{Some remarks on the box-counting dimensions.}
\newblock Progr. Natur. Sci. (English Ed.) 9:6 (1999) 409--415.

\bibitem{GlasnerWeiss}
E. Glasner and B. Weiss.
\newblock \emph{Quasi-factors of zero entropy systems.}
\newblock J. Amer. Math. Soc. 8 (1995) 665--686.

\bibitem{GraLus}
S. Graf and H. Luschgy.
\newblock Foundations of Quantization for Probability Distributions.
\newblock \emph{Lecture Notes in Math.} 1730, Springer-Verlag, Berlin, 2000.

\bibitem{Gro99}
M.~Gromov.
\newblock \emph{Topological invariants of dynamical systems and spaces of holomorphic maps I.}
\newblock Math. Phys. Anal. Geom. 2:4 (1999) 323--415.










\bibitem{GutmanSp}
Y. Gutman and A. \'Spiewak.
\newblock \emph{Metric mean dimension and analog compression.}
\newblock IEEE Trans. Inform. Theory 66:11 (2020), 6977--6998

\bibitem{Hazard}
P. Hazard.
\newblock \emph{Maps in dimension one with infinite entropy.}
\newblock  Ark. Mat. 58:1 (2020), 95--119.

\bibitem{KH}
A. Katok and B. Hasselblat.
\newblock \emph{Introduction to the Modern Theory of Dynamical Systems.} 
\newblock Encyclopedia of Mathematics and its Applications 54, Cambridge University Press, 1995.

\bibitem{KerrLi}
D. Kerr and H. Li.
\newblock \emph{Independence in topological and $C^*$-dynamics.}
\newblock Math. Ann. 338:4 (2007) 869--926.


\bibitem{Kolmogorov-Tikhomirov}
A. N. Kolmogorov and V. M. Tikhomirov.
\newblock\emph{$\epsilon$-Entropy and $\epsilon$-capacity of sets in functional spaces.}
\newblock Amer. Math. Soc. Transl. 2:17 (1961) 277--364.

\bibitem{KolyadaSnoha}
S. Kolyada and L. Snoha.
\newblock\emph{Topological entropy of nonautonomous dynamical systems.}
\newblock Random Comput. Dynam. 4 (1996), 205--233.




\bibitem{LW2000}
E. Lindenstrauss and B. Weiss.
\newblock \emph{Mean topological dimension.}
\newblock Israel J. Math. 115 (2000) 1--24.


\bibitem{LindTsu}
E. Lindenstrauss and M. Tsukamoto.
\newblock \emph{From rate distortion theory to metric mean dimension: variational principle.}
\newblock IEEE Transactions on Information Theory 64:5 (2018) 3590--3609.


\bibitem{Mis}
M.~Misiurewicz.
\newblock \emph{Horseshoes for continuous mappings of an interval.}
\newblock Dynamical Systems Lectures, C.I.M.E. Summer Schools 78, C. Marchioro (Ed.), Springer-Verlag Berlin Heidelberg, 2010, 127--135.














\bibitem{VV}
A. Velozo and R. Velozo.
\newblock \emph{Rate distortion theory, metric mean dimension and measure theoretic entropy.}
\newblock Preprint, 2017, arXiv:1707.05762.


\bibitem{Vi}
C. Villani.
\newblock Topics in Optimal Transportation.
\newblock \emph{Graduate Studies in Mathematics} 58, American Mathematical Society, Providence, RI, 2003.

\bibitem{Wa}
P. Walters.
\newblock An Introduction to Ergodic Theory.
\newblock  Springer-Verlag New York, 1982.

\bibitem{Yano}
K. Yano.
\newblock \emph{A remark on the topological entropy of homeomorphisms.}
\newblock Invent. Math. 59 (1980) 215--220.


\end{thebibliography}
\end{document}